\newcommand{\hash}[1]{{\ttfamily\seqsplit{#1}}}
\newtheorem{lemma}{Lemma}
\newtheorem{teo}[lemma]{Theorem}
\newtheorem{prop}[lemma]{Proposition}
\newtheorem{cor}[lemma]{Corollary}
\newtheorem{guess}[lemma]{Guess}
\theoremstyle{definition}
\newtheorem{defn}[lemma]{Definition}
\newtheorem{quest}[lemma]{Question}
\newtheorem{probl}[lemma]{Problem}
\theoremstyle{remark}
\newtheorem{rem}[lemma]{Remark}
\newcommand{\Out}{{\rm Out}}
\newcommand{\matR} {\ensuremath {\mathbb{R}}}
\newcommand{\matZ} {\ensuremath {\mathbb{Z}}}
\newcommand{\matH} {\ensuremath {\mathbb{H}}}
\newcommand{\calS} {\ensuremath {\mathcal{S}}}
\newcommand{\id} {\ensuremath {{\rm id}}}
\newcommand{\Isom} {\ensuremath {{\rm Isom}}}
\newcommand{\lk} {\ensuremath {{\rm lk}}}
\newcommand{\St} {\ensuremath {{\rm St}}}
\newcommand{\HW} {\ensuremath {{\rm HW}}}
\newcommand{\nota} [1] {\caption{\footnotesize{#1}}}
\author{Giovanni Italiano}
\address{Scuola Normale Superiore, Piazza dei Cavalieri, 7, 56126 Pisa, Italy}
\email{giovanni dot italiano at sns dot it}
\author{Bruno Martelli}
\address{Dipartimento di Matematica, Largo Pontecorvo 5, 56127 Pisa, Italy}
\email{bruno dot martelli at unipi dot it}
\author{Matteo Migliorini}
\address{Scuola Normale Superiore, Piazza dei Cavalieri, 7, 56126 Pisa, Italy}
\email{matteo dot migliorini at sns dot it}
\title{Hyperbolic 5-manifolds that fiber over $S^1$}
\begin{document}

\begin{abstract}
We exhibit some finite-volume cusped hyperbolic 5-manifolds that fiber over the circle. These include the smallest hyperbolic 5-manifold known, discovered by Ratcliffe and Tschantz. As a consequence, we build a finite type subgroup of a hyperbolic group that is not hyperbolic.
\end{abstract}

\maketitle

\section*{Introduction} \label{introduction:section}
In 1977 J\o rgensen exhibited the first examples of hyperbolic 3-manifolds that fiber over the circle \cite{J} and opened up a subject that has been of central importance in low-dimensional topology in the last 40 years, with the celebrated contributions of Thurston \cite{Th:s, Th:II} and more recently of Agol and Wise \cite{A, W}. Since then, it has been an open question whether such fibrations may occur in higher dimensions. We provide here an affirmative answer.

We work in the smooth category, so fibrations are smooth. Every hyperbolic manifold $M$ in this introduction is tacitly assumed to be finite-volume and complete, unless otherwise mentioned. When $M$ is not compact, it is the interior of a compact manifold $\overline M$ with boundary (we say as usual that $M$ is \emph{cusped}) and a fibration for $M$ is by definition the restriction of a fibration of $\overline M$.

An even-dimensional hyperbolic manifold cannot fiber over the circle because of the Chern -- Gauss -- Bonnet Theorem. The first dimension to look at is then $n=5$. We prove here the following.

\begin{teo} \label{main:teo}
There are some cusped hyperbolic 5-manifolds that fiber over $S^1$.
\end{teo}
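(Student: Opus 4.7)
The plan is to realize Theorem \ref{main:teo} by exhibiting a specific $M$ together with an explicit smooth fibration $\overline M \to S^1$, following the state/coloring paradigm used by Bestvina--Brady and more recently by Jankiewicz--Norin--Wise for cube complexes, adapted to hyperbolic manifolds built from right-angled polytopes. First I would fix a natural combinatorial model, the Ratcliffe--Tschantz manifold, which is the smallest known cusped hyperbolic 5-manifold and is glued out of copies of a fixed right-angled ideal hyperbolic 5-polytope $P$. The right-angled structure is essential: it forces the candidate level sets to meet $P$ along totally geodesic subpolytopes, and it makes the links of all strata (finite vertices, ideal vertices, edges, etc.) admit a clean combinatorial description amenable to a link-condition analysis.

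Having chosen $M$, the next step is to produce the fibration from a \emph{state}, i.e. a labeling of certain combinatorial pieces of $P$ (either facets or vertices) by $\pm 1$ that is equivariant with respect to the gluing pattern of $M$. Such a state defines an alternating height function on the universal cover, and hence a cohomology class in $H^1(M;\matZ)$ realized by a circle-valued smooth map $f$. In the style of discrete Morse theory, the question of whether $f$ is a fibration reduces to a finite combinatorial check: at each orbit of stratum the \emph{ascending} and \emph{descending} links must be spheres of the correct dimension. When this sphericity condition holds everywhere, including along the horospherical cross-sections that model the cusps, $f$ is automatically a locally trivial smooth fibration with 4-manifold fiber.

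Two genuine complications arise in this cusped setting. First, the cusps must be handled alongside the interior: the restriction of $f$ to each cusp cross-section has to be itself a fibration of a flat 4-manifold over the circle, which imposes an additional combinatorial condition at the ideal vertex links of $P$. Second, the sphericity conditions are typically too rigid to be satisfiable by any state defined directly on $M$, so one will almost certainly need to pass to a carefully chosen finite cover $\widetilde M \to M$ to gain enough flexibility in choosing the state. The main obstacle I expect is precisely this: identifying a cover and a state such that \emph{every} orbit of stratum, interior and cuspidal, passes the sphericity test simultaneously. The final verification of this finite but large set of conditions is almost certain to require computer assistance, and the output should be an explicit state on an explicit cover, together with the tables of ascending/descending links certifying that $f$ is the desired fibration over $S^1$.
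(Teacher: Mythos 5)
Your high-level plan — build a circle-valued map from a combinatorial ``state'' on a right-angled polytope and verify a vertex-link condition in the style of Bestvina--Brady and Jankiewicz--Norin--Wise — is the same paradigm the paper uses, and you correctly anticipate that cusp links require separate attention and that computer verification is needed. But there is a decisive conceptual error in the criterion you propose. You require the ascending and descending links to be \emph{spheres} ``of the correct dimension.'' That is the opposite of what yields a fibration. In Bestvina--Brady Morse theory, a vertex where the ascending or descending link is a sphere is precisely a \emph{critical} vertex (a handle attachment in the sublevel filtration), so a function satisfying your sphericity condition would have as many critical points as it has vertices and would be maximally far from a fibration. What one actually needs is for all ascending and descending links to be \emph{collapsible} (in particular contractible): only then are there no critical vertices, the sublevel sets are PL products, and the map can be smoothed to a genuine fibration. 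The paper proves exactly this collapsibility at every vertex type (interior, boundary, and the new vertices created by a subdivision), not sphericity.

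Two further points, smaller but worth flagging. First, the paper does not work directly on the Ratcliffe--Tschantz manifold $N^5$: it first builds a much larger manifold $M^5$ tessellated by $2^8$ copies of $P^5$ (where the ``state'' formalism with moves is clean), proves the fibration there, and only afterward descends to $N^5$ by a careful quotient. Your route of starting with $N^5$ and passing to a cover is plausible in spirit, but it is not what is done, and directly exhibiting a suitable state on $N^5$ without the intermediate $M^5$ is harder. Second, you do not anticipate the obstruction that the natural set of moves is not \emph{sparse}: some dual squares in the cubulation carry incoherent edge-orientations, and the paper must subdivide the cubes containing these bad squares into prisms before the diagonal map is even well-defined. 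This is a genuine technical hurdle, not a detail. Finally, note that $P^5$ is not an ideal polytope; it has both ideal and finite vertices, and both kinds of links enter the argument.
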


We show in particular that the smallest hyperbolic 5-manifold known, constructed by Ratcliffe and Tschantz in \cite{RT5}, fibers over the circle, and the fiber is a 4-manifold with $\chi = 1$. This is somehow analogous to the figure-eight knot complement, a famous hyperbolic 3-manifold of small volume that fibers over the circle with fiber a punctured torus with $\chi = -1$.

As noted by Thurston \cite{Th:II}, the existence of hyperbolic manifolds that fiber over the circle is quite paradoxical, because the fiber is very far from being geodesic: its counterimage in $\matH^n$ consists of disjoint smooth hyperplanes, each placed at uniformly bounded distance from each other and having the whole sphere at infinity as a limit set.

In higher dimension $n\geq 5$ the situation is even more paradoxical because the monodromy of the fiber $F$ has infinite order in $\Out(\pi_1(F))$, so in particular $F$ cannot admit any hyperbolic metric by Mostow Rigidity. By pursuing along this line we deduce the following. A group $G$ is of \emph{finite type} if it is the fundamental group of a finite aspherical cell complex.

\begin{cor} \label{main:cor}
There is a hyperbolic group $G$ that contains a subgroup $H<G$ of finite type that is not hyperbolic.
\end{cor}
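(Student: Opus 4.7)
The plan is to deduce Corollary~\ref{main:cor} from Theorem~\ref{main:teo} by a relatively hyperbolic Dehn filling argument. Let $M$ be a cusped hyperbolic 5-manifold fibering over $S^1$, with compact fiber $\overline F$, a 4-manifold whose boundary is a disjoint union of 3-tori.

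First I would collect the soft facts. The long exact homotopy sequence of the fibration $\overline F \to \overline M \to S^1$, combined with $\pi_n(S^1)=0$ for $n\geq 2$ and the asphericity of $M$ (being a $K(\pi_1(M),1)$), shows that $\overline F$ is aspherical. Being a compact manifold, $\overline F$ is a finite $K(\pi_1(\overline F),1)$, so $\pi_1(\overline F)$ is of finite type. Each boundary component $T^3\subset\partial\overline F$ is $\pi_1$-injective, since the composition $\matZ^3 \to \pi_1(\overline F) \to \pi_1(\overline M)$ factors through a cusp subgroup $\matZ^4$ and is injective there. Consequently $\pi_1(\overline F)$ contains $\matZ^3$, is not itself hyperbolic, and cannot embed in any hyperbolic group; therefore the desired subgroup $H$ must be constructed as a proper quotient of $\pi_1(\overline F)$.

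I would construct $G$ by applying the Groves--Manning--Osin Dehn filling theorem to $\pi_1(M)$, which is hyperbolic relative to its cusp subgroups $P_i\cong\matZ^4$. The fibration induces, in each cusp, a splitting $P_i = (\pi_1(\overline F)\cap P_i) \oplus \matZ_{\mathrm{mono}} \cong \matZ^3\oplus\matZ$. For sufficiently deep normal subgroups $N_i\triangleleft P_i$ with $P_i/N_i$ hyperbolic (e.g.\ virtually cyclic), the quotient $G := \pi_1(M)/K$ by the normal closure $K$ of $\bigcup_i N_i$ is a hyperbolic group. I then take $H$ to be the image of $\pi_1(\overline F)$ in $G$.

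The hardest step is the simultaneous verification of three properties of the pair $(G,H)$: $G$ is hyperbolic, $H$ is of finite type, and $H$ is not hyperbolic. The first is granted by depth of the $N_i$. For the second, one should realize the induced quotient $\pi_1(\overline F)\twoheadrightarrow H$ topologically, by capping each boundary $T^3$ of $\overline F$ with a finite aspherical complex whose $\pi_1$ matches the image of $\matZ^3$ in $P_i/(K\cap P_i)$; this restricts the admissible $N_i$. For the third, since $H$ is a subgroup of the hyperbolic group $G$ it cannot contain $\matZ^2$, so its non-hyperbolicity must be detected by a finer invariant. The natural candidate is the persistence of the fibration structure $G = H\rtimes\matZ$ together with the infinite order of the induced monodromy in $\mathrm{Out}(H)$, which would contradict a Mostow-type rigidity obstruction highlighted in the paragraph before the corollary if $H$ were hyperbolic. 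Turning this obstruction into a rigorous non-hyperbolicity statement for our specific $H$—via, say, a super-linear Dehn function or a finiteness-type invariant—is where I expect the substantive technical work to concentrate.
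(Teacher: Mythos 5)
Your high-level plan --- Dehn fill the cusps of $M^5$ to obtain a hyperbolic group $G$, and take $H$ to be the image of the fiber subgroup --- is exactly the route the paper takes, so the architecture is sound. But two of the three items you flag as needing verification are handled more specifically in the paper than you suggest, and the third (non-hyperbolicity of $H$), which you explicitly leave open, is resolved by a tool you do not mention.

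First, the filling is not an arbitrary ``sufficiently deep'' filling: after isotoping $f$ so that its restriction to each boundary $4$-torus $T^4$ is a geodesic fibration by $3$-tori (Proposition~\ref{tori:prop}), one passes to a finite cover with large cusp systoles and fills precisely along those fiber $3$-tori, using the Fujiwara--Manning $2\pi$-filling theorem. The resulting $\hat M^5$ is a pseudo-manifold obtained by collapsing each boundary $T^4$ to a circle, and $\hat F^4$ is $\overline F^4$ with each boundary $T^3$ coned to a single point; in particular the image of $\matZ^3$ in $P_i/(K\cap P_i)$ is trivial, and no auxiliary aspherical cap is needed. Second, asphericity of $\hat F^4$ is not established locally at the cones (coning boundary tori generally \emph{destroys} asphericity); it is deduced globally from the $2\pi$-filling theorem, which gives $\hat M^5$ a locally CAT($-\kappa$) metric, hence asphericity of $\hat M^5$, hence of its cover $\hat F^4 \times \matR$, hence of $\hat F^4$.

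The genuine gap is the non-hyperbolicity of $H$. Your guess that one should leverage infinite order of the monodromy in $\Out(H)$ is correct, but the mechanism is not a Mostow-type rigidity for $H$, a super-linear Dehn function, or a finiteness invariant. The paper first shows $\Out(H)$ is infinite (conjugation by a lift $\alpha$ of the $S^1$-generator is non-inner for all $k\ne 0$, using that centralizers in the torsion-free hyperbolic group $G$ are cyclic). Then, \emph{if $H$ were hyperbolic}, Rips' theory (Bestvina--Feighn, ``Stable actions of groups on real trees'') would force $H$ to split over a cyclic subgroup, since $\Out(H)$ is infinite. This splitting is ruled out cohomologically: $\hat F^4$ is an aspherical orientable pseudo-manifold of dimension $4$, so $H^4(H) = H^4(\hat F^4) = \matZ$; but in a splitting $H = A *_{\matZ} B$ (or HNN) with $A,B$ of infinite index, $A$ and $B$ are fundamental groups of non-compact aspherical $4$-complexes, so $H^4(A)=H^4(B)=0$, and Mayer--Vietoris gives $H^4(H)=0$, a contradiction. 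Without this Rips-plus-cohomology argument (or a substitute), the corollary is not proved, so your proposal as written has a real hole precisely where you feared it would.
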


The existence of such a pair $H< G$ was a well-known open problem, raised in particular by Bestvina \cite[Question 1.1]{B}, Brady \cite[Question 7.2]{Bra}, Bridson \cite[Question 4.1]{Br}, and Jankiewicz, Norin, and Wise \cite[Section 7]{JNW}. A finitely presented subgroup of a hyperbolic group that is not hyperbolic was first exhibited by Brady in \cite{Bra}. On the other hand, every finite type subgroup of a hyperbolic group of cohomological dimension 2 is hyperbolic \cite{Ge}.

The group $H$ constructed here is the kernel of a surjective homomorphism $G \to \matZ$, and it is the fundamental group of a 4-dimensional finite aspherical complex obtained by coning all the 3-tori boundaries of some compact aspherical 4-manifold. The homomorphism $G \to \matZ$ is built by taking a fibering cusped hyperbolic 5-manifold with large cusp sections and coning the boundary components of the fibers, see Section \ref{GGT:section}. The cohomological dimensions of $H$ and $G$ are 4 and 5.

\begin{cor}
There is a finite type group $H$ that is not hyperbolic and does not contain any Baumslag -- Solitar subgroup $BS(m,n)$.
\end{cor}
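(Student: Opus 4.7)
The plan is to take the very pair $H < G$ produced by Corollary \ref{main:cor}: we already know that $H$ is of finite type and not hyperbolic, so the only remaining task is to verify that $H$ contains no Baumslag -- Solitar subgroup $BS(m,n)$ with $m,n \neq 0$. Since $H$ embeds in the hyperbolic group $G$, it suffices to prove the well-known fact that no such $BS(m,n)$ embeds in any word-hyperbolic group, a property which is then inherited by every subgroup.

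For this I would invoke the standard translation-length argument. Suppose for contradiction that $BS(m,n) = \langle a, b \mid b a^m b^{-1} = a^n\rangle$ embeds in a hyperbolic group $G$ with $m,n \neq 0$. The element $a$ has infinite order in $BS(m,n)$, hence also in $G$, so its stable translation length $\tau(a)$ is strictly positive (in a hyperbolic group every infinite-order element is undistorted). Since conjugate elements of a hyperbolic group have equal translation length, the defining relation yields $|m|\,\tau(a) = |n|\,\tau(a)$, and therefore $|m| = |n|$. But then $BS(\pm k, \pm k)$ visibly contains a $\matZ^2$ subgroup (generated by $a^k$ together with either $b$ or $b^2$, depending on signs), contradicting Gromov's theorem that hyperbolic groups contain no $\matZ^2$.

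The main obstacle is essentially nonexistent: once Corollary \ref{main:cor} is in hand, this statement is an immediate consequence of classical hyperbolic group theory (see, e.g., Bridson -- Haefliger or Ghys -- de la Harpe). The content of the corollary is therefore really just a strengthened reading of Corollary \ref{main:cor}, emphasizing that the non-hyperbolicity of $H$ cannot be detected by the presence of any Baumslag -- Solitar obstruction.
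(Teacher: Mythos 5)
Your proposal is correct and follows exactly the same route as the paper: take the pair $H < G$ from Corollary~\ref{main:cor} and invoke the fact that hyperbolic groups contain no Baumslag--Solitar subgroups, a property inherited by subgroups. The paper simply cites this fact without proof, whereas you have helpfully spelled out the standard translation-length argument behind it.
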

\begin{proof}
The group $H$ is contained in a hyperbolic group $G$, and hyperbolic groups contain no Baumslag -- Solitar subgroups.
\end{proof}

This answers a well-known question, raised in particular by Bestvina \cite{B}, Bridson \cite[Question 2.22]{Br} and Drutu and Kapovich \cite[Problem 11.129]{DK}.
We note that there are some classes of groups for which it is known that they are hyperbolic if and only if they do not contain any Baumslag -- Solitar subgroup: these include the free-by-cyclic groups \cite{Bri} and more generally the ascending HNN extensions of free groups \cite{Mu},

Turning back to our fibering hyperbolic 5-manifold, by taking the abelian cover determined by the fibration we deduce the following.

\begin{cor}
There are some geometrically infinite hyperbolic 5-manifolds that are diffeomorphic to $F\times \matR$, for some 4-manifold $F$ that is diffeomorphic to the interior of a compact 4-manifold with boundary.
\end{cor}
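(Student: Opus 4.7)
The plan is to take the cyclic cover of the fibered manifold provided by Theorem \ref{main:teo} corresponding to the fiber subgroup, verify that it is diffeomorphic to $F\times\matR$, and then rule out geometric finiteness via a limit set argument.

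More precisely, I would start with a cusped hyperbolic 5-manifold $M$ that fibers over $S^1$ as guaranteed by Theorem \ref{main:teo}, with fiber $F$. Since $M$ is the interior of a compact manifold with boundary $\overline M$ and the fibration extends to $\overline M \to S^1$, the fiber $F$ is the interior of a compact 4-manifold with boundary $\overline F$, as required. The fibration induces a surjection $\pi_1(M)\to\matZ$ whose kernel is exactly $\pi_1(F)$. Let $\tilde M \to M$ be the infinite cyclic cover associated to this kernel. By the standard theory of fiber bundles over $S^1$, pulling back along $\matR \to S^1$ shows that $\tilde M$ is diffeomorphic to $F\times \matR$. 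Since $\tilde M$ is a cover of the hyperbolic manifold $M$, it inherits a complete hyperbolic metric.

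It remains to show that $\tilde M$ is geometrically infinite. Write $\Gamma = \pi_1(M)$ and $\Gamma' = \pi_1(\tilde M) = \pi_1(F)$, so that $M = \matH^5/\Gamma$ and $\tilde M = \matH^5/\Gamma'$. Since $M$ has finite volume, $\Gamma$ is a lattice in $\Isom(\matH^5)$ and its limit set is the whole sphere at infinity $S^4_\infty$. The subgroup $\Gamma'$ is normal in $\Gamma$ (with infinite quotient $\matZ$), and a normal subgroup of a non-elementary Kleinian group has the same limit set as the ambient group; hence the limit set of $\Gamma'$ is also all of $S^4_\infty$. On the other hand, $\tilde M \cong F\times\matR$ has infinite volume, since it is an infinite cover of a finite-volume manifold. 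If $\Gamma'$ were geometrically finite with full limit set, then by the standard characterization it would be a lattice, contradicting the fact that $[\Gamma:\Gamma']=\infty$. Therefore $\tilde M$ is geometrically infinite.

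The main obstacle I anticipate is nothing deep, but rather making sure the invocation of ``full limit set plus geometric finiteness implies lattice'' is applied correctly in dimension $5$ in the presence of cusps; this follows from the general theory of geometrically finite Kleinian groups (where geometric finiteness means that the thick part of the convex core is compact), so it should pose no real difficulty. The rest of the argument is essentially formal once Theorem \ref{main:teo} is in hand.
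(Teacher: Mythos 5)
Your proof is correct and follows essentially the same route as the paper: the corollary is obtained from Theorem \ref{main:teo} by passing to the infinite cyclic (abelian) cover of the fibration, which is exactly the manifold $\tilde M^5$ introduced at the start of Section~\ref{quotient:section}, where the paper simply asserts without detail that $\tilde M^5$ is geometrically infinite and diffeomorphic to $F^4\times\matR$. Your limit-set argument (an infinite-index normal subgroup of a lattice has full limit set, and a geometrically finite group with full limit set would be a lattice) is the standard and correct way to fill in the geometric-infiniteness claim that the paper leaves implicit.
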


Finally, by taking the finite cyclic covers over a fibering 5-manifold we deduce immediately the following.

\begin{cor}
There are infinitely many hyperbolic 5-manifolds with bounded Betti numbers $\sum_{i=0}^5 b_i < K$, for some $K>0$.
\end{cor}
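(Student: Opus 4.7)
The plan is to follow the hint and pass to finite cyclic covers of a fibering hyperbolic 5-manifold produced by Theorem~\ref{main:teo}. Let $M$ be such a manifold, with compact core $\overline M$, fiber $F$, and monodromy $\phi : F \to F$. The fibration $\overline M \to S^1$ yields a surjective homomorphism $\pi_1(\overline M) \to \matZ$, and for each integer $k \geq 1$ I take the cyclic cover $\overline{M_k} \to \overline M$ of degree $k$ corresponding to $k\matZ < \matZ$. Its interior $M_k$ is a complete finite-volume hyperbolic 5-manifold (any cover of such is) and fibers over $S^1$ with fiber $F$ and monodromy $\phi^k$.

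To see that the $M_k$ provide infinitely many pairwise non-diffeomorphic examples, I would invoke $\Vol(M_k) = k\cdot\Vol(M)$, which together with Mostow--Prasad rigidity makes hyperbolic volume a topological invariant distinguishing the $M_k$. For the uniform Betti number bound, I would use the Wang long exact sequence of the fibration $F \to \overline{M_k} \to S^1$,
\begin{equation*}
\cdots \to H_i(F;\matQ) \xrightarrow{\phi_*^k - \id} H_i(F;\matQ) \to H_i(\overline{M_k};\matQ) \to H_{i-1}(F;\matQ) \to \cdots,
\end{equation*}
which splits $H_i(\overline{M_k};\matQ)$ as an extension of a subgroup of $H_{i-1}(F;\matQ)$ by a quotient of $H_i(F;\matQ)$, and so gives the estimate $b_i(M_k) = b_i(\overline{M_k}) \leq b_i(F) + b_{i-1}(F)$ for every $i$. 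Summing over $i=0,\ldots,5$ yields $\sum_{i=0}^5 b_i(M_k) \leq 2\sum_{i=0}^4 b_i(F) =: K$, a constant independent of $k$, as required.

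I do not foresee any genuine obstacle: both ingredients, namely cyclic covers of a fibering manifold and the Wang sequence bound, are completely standard, and Theorem~\ref{main:teo} has already done all the nontrivial work. The only minor point to keep track of is to phrase the fibration on the compactification $\overline M$, whose rational homology agrees with that of $M$, so that everything is finite-dimensional and the Wang sequence is honestly applicable.
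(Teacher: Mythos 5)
Your proof is correct and follows essentially the same route as the paper: pass to finite cyclic covers of a fibering $M$, distinguish them by volume, and bound Betti numbers in terms of those of the fiber $F$. The paper phrases the last step via Mayer--Vietoris while you use the Wang exact sequence (which is the same computation, since the Wang sequence for a mapping torus is derived from a Mayer--Vietoris decomposition), so the difference is cosmetic rather than substantive.
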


\begin{proof}
Let $M \to S^1$ be a fibering hyperbolic 5-manifold with fiber $F$. Let $M_{n} \to M$ be the degree-$n$ cyclic covering determined by the kernel of $\pi_1(M) \to \pi_1(S^1) = \matZ \to \matZ/{n\matZ}$. The manifold $M_n$ also fibers with fiber $F$. Using Mayer-Vietoris we easily see that $b_i(M_n)$ is bounded above by some number that depends only on $F$.
\end{proof}

\subsection*{Structure of the paper}
In Section \ref{fibration:section} we construct a fibration $f\colon M^5 \to S^1$ on the cusped hyperbolic 5-manifold $M^5$ that was previously considered in \cite{IMM}. This manifold has 40 cusps, and the main tool that we use to build $f$ is the picewise-linear Morse theory of Bestvina and Brady \cite{BB}.

In Section \ref{quotient:section} we manipulate this example to construct a fibration on the much smaller Ratcliffe -- Tschantz hyperbolic 5-manifold $N^5$, which has 2 cusps and is in fact commensurable with $M^5$. 
The fiber of this fibration is an aspherical 4-manifold $F^4$ with $\chi(F^4) = 1$ that we can study and describe explicitly using Regina \cite{regina}.

We prove Corollary \ref{main:cor} in Section \ref{GGT:section}. Given Theorem \ref{main:teo}, the proof of the corollary is relatively short and contains some standard arguments that are known to experts: the only technical difficulties arise from the fact that $M^5$ is cusped, but these are overcome by applying the Dehn filling theorems of Fujiwara and Manning \cite{FM}. An effect of this is that the non-hyperbolic group $H$ is the fundamental group of an aspherical 4-dimensional \emph{pseudo}-manifold (which is not a manifold).

In Section \ref{further:section} we make some general comments and suggest some future directions of research.

\subsection*{Acknowledgements} We thank Roberto Frigerio and Giles Gardam for valuable discussions.

\section{The fibration} \label{fibration:section}
We prove here Theorem \ref{main:teo}, that is we provide some examples of cusped hyperbolic 5-manifolds that fiber over the circle. The main protagonist is the manifold $M^5$, that we have already encountered in \cite{IMM}. This manifold is constructed from a right-angled polytope $P^5$ via a standard colouring technique that we briefly recall below.

To build the fibration $f\colon M^5 \to S^1$, we work in the piecewise-linear category, and start by constructing a piecewise-linear $f$. The main tool is the picewise-linear Morse theory of Bestvina and Brady \cite{BB}, and we are heavily inspired by the more recent paper of Jankiewicz, Norin, and Wise \cite{JNW}. We give $M^5$ an affine cell complex structure and then define $f$ by glueing altogether some height functions on each affine cell. Since the ascending and descending links at all vertices collapse to points, the function $f$ is smoothable to a fibration.

In Section \ref{quotient:section} we will quotient the manifold $M^5$ by some isometries to recover some smaller example where the fiber of the fibration is combinatorially simpler. This smaller example will turn out \emph{a posteriori} to be the manifold exhibited by Ratcliffe and Tschantz \cite{RT5}.

\subsection{The polytope $P^5$}
We describe the 5-dimensional hyperbolic right-angled polytope $P^5\subset \matH^5$, already considered by various authors \cite{ALR, IMM, PV, RT5}. We use the Klein model for $\matH^5$, embedded in $\matR^5$ as the unit ball.

The polytope $P^5$ is the intersection of the 16 half-spaces 
$$\pm x_1 \pm x_2 \pm x_3 \pm x_4 \pm x_5 \leq 1$$
having an even number of minus signs. It has 16 facets and 26 vertices. 
The 16 facets are orthogonal to the vectors
$$(\pm 1, \pm 1, \pm 1, \pm 1, \pm 1)$$
with an even number of minus signs. The 26 vertices consist of the 10 ideal vertices
$$(\pm 1,0,0,0,0),\quad \ldots, \quad (0,0,0,0,\pm 1) $$
and of the 16 real vertices
$$\frac 13 (\pm 1, \pm 1, \pm 1, \pm 1, \pm 1)$$
with an odd number of minus signs. Every facet of $P^5$ is opposed to a real vertex.
The isometry group $\Isom(P^5)$ has order $2^4\cdot 5! = 1920$ and consists of all the maps
$$(x_1,x_2,x_3,x_4,x_5) \longmapsto (\pm x_{\sigma(1)}, \pm x_{\sigma(2)}, \pm x_{\sigma(3)}, \pm x_{\sigma(4)}, \pm x_{\sigma(5)})$$
where $\sigma \in S_5$ is any permutation and there is an even number of minus signs.

The polytope $P^5$ is right-angled. Two facets of $P^5$ are adjacent if the corresponding orthogonal vectors differ in only two signs. Every facet of $P^5$ is isometric to the polytope $P^4$ considered in \cite{RT4}. The isometry group of $P^5$ acts transitively on its facets. The adjacency graph of the facets of $P^5$ is shown in Figure \ref{P5:fig}. The polytope $P^4$ has 10 facets, so every vertex in the adjacency graph is connected to 10 other vertices.

Every 3-dimensional face of $P^5$ is isometric to the right-angled double pyramid $P^3$ with triangular base, with 3 ideal vertices and 2 real ones, see \cite{RT4}. Every 2-dimensional face of $P^5$ is a triangle with two ideal vertices and one (right-angled) real one.

\begin{figure}
\labellist
\small\hair 2pt
\pinlabel $+++++$ at 260 535
\pinlabel $----+$ at 260 -10
\pinlabel $---+-$ at 65 60
\pinlabel $--+--$ at 465 60
\pinlabel $-+---$ at 160 170
\pinlabel $+----$ at 370 170
\pinlabel $--+++$ at 255 140

\pinlabel $++-+-$ at 65 463
\pinlabel $+++--$ at 465 463
\pinlabel $-+++-$ at 160 350
\pinlabel $+-++-$ at 370 350
\pinlabel $++--+$ at 255 390

\pinlabel $+-+-+$ at 505 240
\pinlabel $-++-+$ at 405 280
\pinlabel $+--++$ at 155 280
\pinlabel $-+-++$ at 5 240

\endlabellist
 \begin{center}
  \includegraphics[width = 12.5 cm]{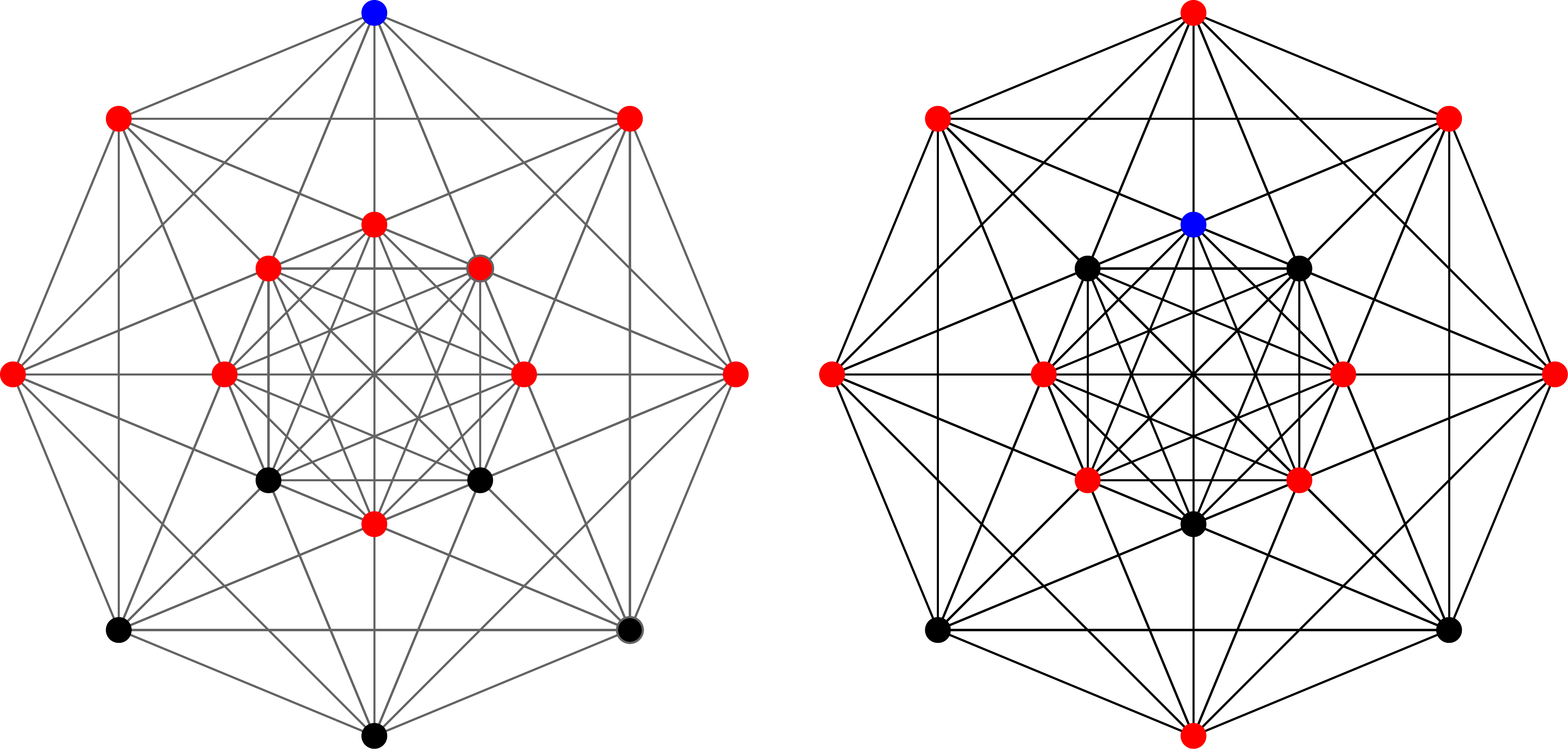}
 \end{center}
 \nota{The adjacency graph of the 16 facets of $P^5$. 
Each facet is identified as a string $\pm \pm \pm \pm \pm$ that indicates the vector $(\pm 1, \pm 1, \pm 1, \pm 1, \pm 1)$ orthogonal to the supporting hyperplane. These 16 vectors in $\matR^5$ are projected orthogonally onto the plane spanned by $u = (\sqrt 2, \sqrt 2, 2-\sqrt 2, 2-\sqrt 2, 0)$ and $v=(2-\sqrt 2, \sqrt 2-2, \sqrt 2, -\sqrt 2, 0)$. The picture shows the image of this projection, with an edge connecting every pair of points that represent adjacent facets. (This is the projection of the 1-skeleton of the  
Euclidean Gosset polytope $1_{21}$ combinatorially dual to $P^5$ \cite{G}.) 
Beware that some edges are superposed in the projection: to clarify this ambiguity, we have selected a blue point and painted in red the 10 points connected to it, in two cases (all the other cases are obtained by rotation).}
 \label{P5:fig}
\end{figure}

\subsection{The ideal vertices} 
The polytope $P^5$ has 10 ideal vertices $\pm e_1, \ldots, \pm e_5$. The link of each ideal vertex $v= \pm e_i$ is a 4-cube, whose facets are contained in the 8 facets of $P^5$ incident to $v$. These 8 facets are precisely those orthogonal to the vectors $(\pm 1, \pm 1, \pm 1, \pm 1, \pm 1)$ whose $i$-th coordinate is the same as that of $v$.

\begin{figure}
 \begin{center}
  \includegraphics[width = 12.5 cm]{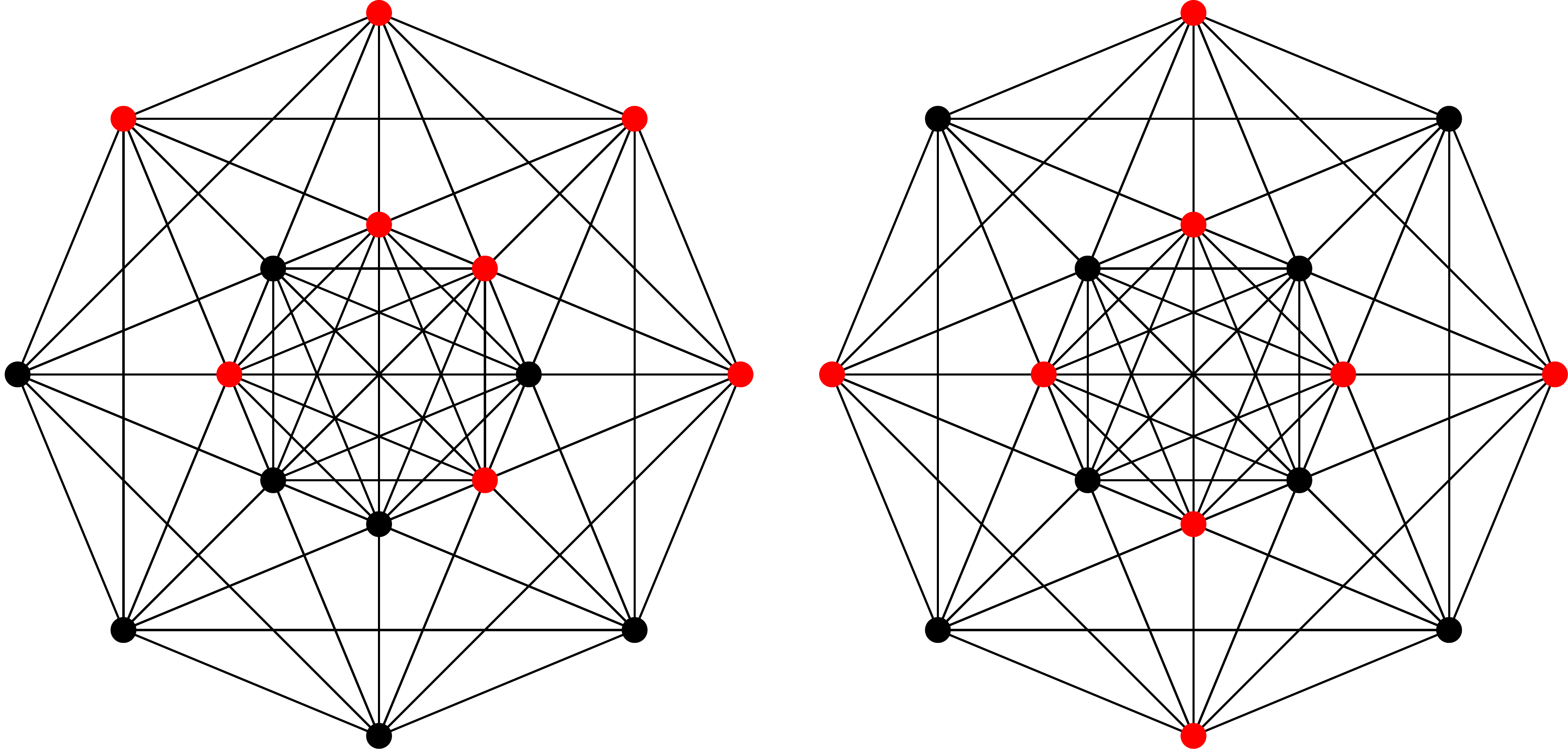}
 \end{center}
 \nota{Each ideal vertex $v$ of $P^5$ is determined by the 8 facets it is adjacent to, here drawn as red dots in the adjacency graph. If $i\leq 4$ the 8 facets are like in the left figure (we get 8 configurations of this type by rotation), while if $i=5$ they are like in the right figure (we get 2 configurations of this type by rotation).}
 \label{cusps:fig}
\end{figure}

The 8 facets incident to an ideal vertex $v= \pm e_i$ can be visualised in the adjacency graph of $P^5$ as in Figure \ref{cusps:fig}. If $i\leq 4$ the 8 facets are as in Figure \ref{cusps:fig}-(left), while if $i=5$ these are as in Figure \ref{cusps:fig}-(right). 


\subsection{The manifold $M^5$}
We constructed the hyperbolic manifold $M^5$ in \cite{IMM} by assigning a very symmetric colouring to $P^5$, that may be described as follows. 

We name each facet of $P^5$ as in Figure \ref{P5:fig} with the vector $(\pm 1, \ldots, \pm 1)$ orthogonal to its supporting hyperplane (there is an even number of minus signs). We assign 8 distinct colours to the facets of $P^5$, by giving the same colour to each pair
$$(\pm 1, \pm 1, \pm 1, \pm 1, \pm 1), \qquad (\mp 1, \mp 1, \mp 1, \mp 1, \pm 1).$$
More specifically, we use the set $\{1,\ldots, 8\}$ as a palette of colours and we assign 
them to the facets as indicated in Figure \ref{P5_colouring:fig}.
This is indeed a colouring for $P^5$, that is adjacent facets are always coloured differently (the two facets in each pair are not adjacent). 

\begin{figure}
 \begin{center}
  \includegraphics[width = 7 cm]{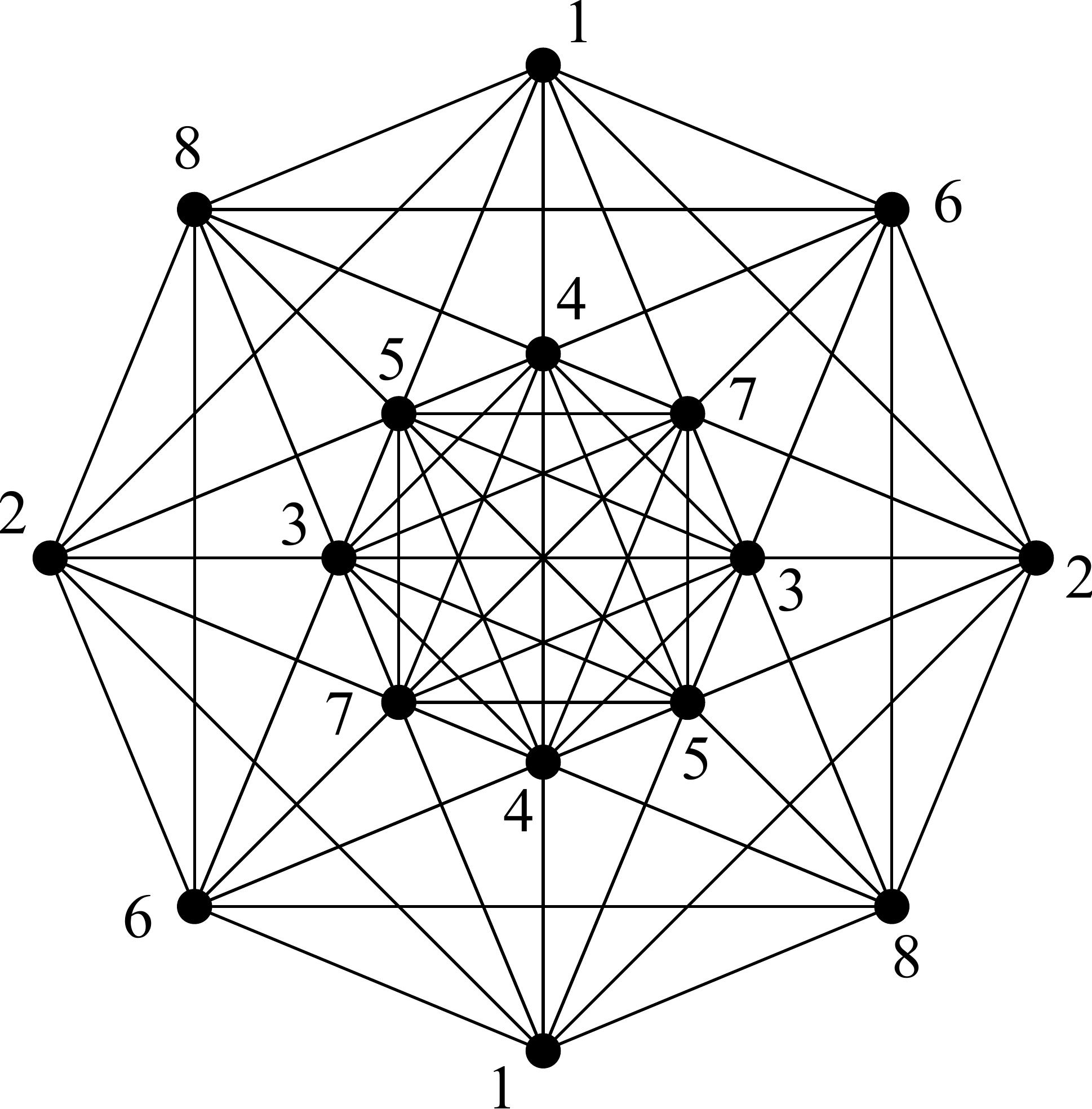}
 \end{center}
 \nota{The chosen colouring for $P^5$.}
 \label{P5_colouring:fig}
\end{figure}

We have then used this colouring to construct $M^5$ via a standard procedure  explained in \cite{IMM}. We pick $2^8$ copies of $P^5$ parametrised as $P_v^5$ with $v\in (\matZ_2)^8$. For every $v$ and every facet $F$ of $P^5$ having some colour $i \in \{1,\ldots, 8\}$, we identify the facet $F$ of $P_v^5$ with the same facet $F$ of $P_{v+e_i}^5$ via the identity map. See \cite{IMM} for more details.

After these identifications we get a cusped hyperbolic manifold $M^5$ tessellated into $2^8$ copies of $P^5$, with a natural orbifold covering $M^5 \to P^5$. It is shown in \cite{IMM} that $M^5$ has 40 cusps and Betti numbers
$$b_1(M^5) = 24, \quad b_2(M^5) = 120, \quad b_3(M^5) = 136, \quad b_4(M^5) = 39.$$

The Betti numbers are calculated using an algorithm from \cite{CP}.

\subsection{The 8 large and 32 small cusps}
To understand the cusps of $M^5$ it suffices to examine a 4-cube link $C$ at each ideal vertex $v$ of $P^5$, equipped with the colouring induced from that of the facets of $P^5$ incident to $v$ (each facet of $C$ inherits the colour of the facet of $P^5$ it is contained in). By the same procedure described above, the colouring of $C$ gives rise to a flat 4-manifold, which is always a 4-torus \cite[Proposition 7]{IMM}.
The counterimage of $C$ in $M^5$ is a disjoint union of $2^c$ such 4-tori, where $c$ is the number of colours in $\{1,\ldots,8\}$ that are \emph{not} present among those of the facets incident to $v$. So there are $2^c$ cusps in $M^5$ lying above $v$, where $c$ depends on $v$. See \cite{IMM} for more details.

\begin{figure}
 \begin{center}
  \includegraphics[width = 12.5 cm]{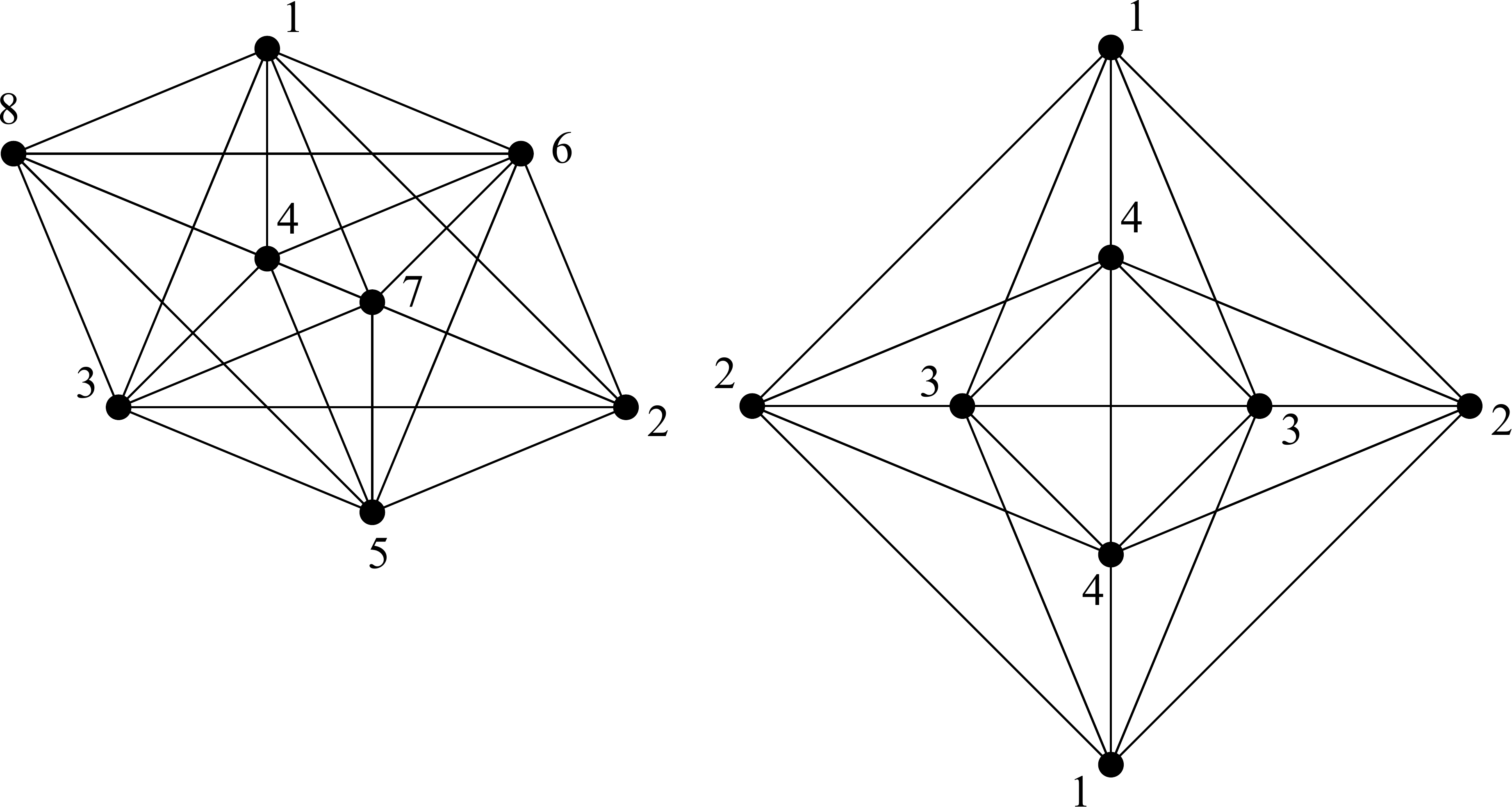}
 \end{center}
 \nota{The two types of ideal vertices of $P^5$ have a 8-coloured and a 4-coloured 4-cube link, respectively.}
 \label{P5_facets:fig}
\end{figure}

There are two types of ideal vertices in $P^5$, and we get $c=8$ for the first type and $c=4$ for the second: some cases are shown in Figure \ref{P5_facets:fig}. Here are the details.
Each of the 8 ideal vertices $v$ of type 
$$(\pm 1, 0,0,0,0), \quad \ldots \quad (0,0,0, \pm 1, 0)$$
is as in Figure \ref{cusps:fig}-(left), and
is adjacent to 8 facets of $P^5$ with all distinct colours: this is clear either by looking at Figure \ref{P5_colouring:fig}, or by recalling that these 8 facets are orthogonal to the vectors $(\pm 1, \pm 1, \pm 1, \pm 1, \pm 1)$ that share the same $i$-th coordinate $\pm 1$ if $v = \pm e_i$. All the 8 colours of the palette are employed and therefore there is a unique cusp in $M^5$ above $v$. The preimage of a 4-cube link $C$ for $v$ is a single 4-torus cusp section in $M^5$ tessellated into $2^8$ distinct 4-cubes. In fact \cite[Proposition 7]{IMM} implies that these form a $4\times 4 \times 4 \times 4$ hypercube with opposite facets identified by translations. We call the 8 cusps of $M^5$ obtained in this way (one above each of the 8 ideal vertices $v$) the \emph{large cusps}.

On the other hand, each of the remaining two ideal vertices $v$ of type 
$$(0,0,0,0, \pm 1)$$
is as in Figure \ref{cusps:fig}-(right), and is adjacent to 4 pairs of facets having the same colours. Therefore there are 4 colours in the palette $\{1,\ldots,8\}$ that are \emph{not} present among the facets adjacent to $v$. This implies that there are as many as $2^4$ cusps in $M^5$ lying above $v$. A 4-cube link $C$ for $v$ lifts to $2^8$ distinct 4-cubes, that form $2^4$ disjoint 4-tori cusp sections in $M^5$. By \cite[Proposition 7]{IMM} each 4-torus is tessellated into $2^4$ distinct 4-cubes like a  $2 \times 2 \times 2 \times 2$ hypercube with opposite faces identified. We call the $2 \times 16 = 32$ cusps of $M^5$ obtained in this way the \emph{small cusps}.

Summing up, the hyperbolic manifold $M^5$ has 8 large cusps and 32 small cusps, all \emph{toric} (that is with 4-tori sections) and all \emph{cubic} (that is obtained by identifying the opposite facets of a 4-cube). 
To be precise, the ``largeness'' of a cusp is not an intrinsic property of the cusp itself, but only of the sections that we have chosen. It is convenient to adopt these terms since these two types of cusps will sometimes play two different roles in the rest of the paper. 

\subsection{The dual cubulation} 
The cusped hyperbolic manifold $M^5$ is diffeomorphic to the interior of a compact smooth 5-manifold with boundary that we denote as $\bar M^5$. The boundary of $\bar M^5$ consists of 40 four-dimensional tori.

\begin{figure}
 \begin{center}
  \includegraphics[width = 12.5 cm]{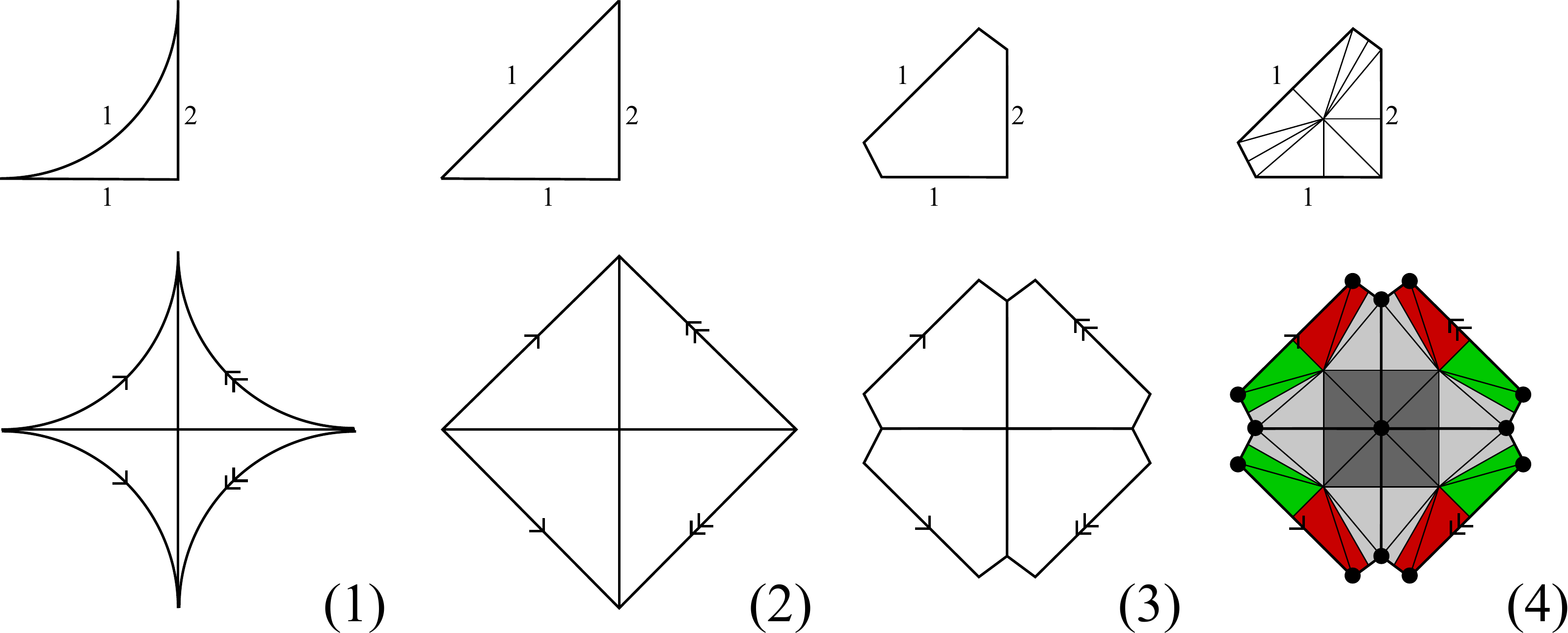}
 \end{center}
 \nota{The dual cubulation. In this example $P^2$ is a triangle with two ideal vertices and one real vertex with angle $\pi/2$, coloured with two colours 1 and 2. The colouring produces a thrice punctured sphere $M^2$ tessellated into four copies of $P^2$ (1). The same $P^2$ and $M^2$ are shown in the piecewise-linear setting (2), and then truncated to $\bar P^2$ and $\bar M^2$ (3). Now $\bar M^2$ is a pair-of-pants and the dual cubulation consists of 9 squares, one dual to the interior vertex of the tessellation and 8 dual to the boundary ones (4).}
 \label{dual_cubulation:fig}
\end{figure}

It is shown in \cite[Section 1.2]{BM} that the decomposition of $M^5$ into $2^8$ copies of $P^5$ induces a \emph{dual cubulation} for $\bar M^5$, considered as a piecewise-linear manifold. This is a general fact on manifolds obtained from coloured polytopes: a simple two-dimensional example is shown in Figure \ref{dual_cubulation:fig}. We recall here this general construction by focusing on $P^5$ for the sake of clarity.

Let $\bar P^5\subset \matR^5$ be the truncated $P^5$, that is the Euclidean polytope obtained by intersecting $P^5$ with the 10 half-spaces $\pm x_i \leq 1-\varepsilon$, for some fixed small $\varepsilon > 0$. The polytope $\bar P^5$ is obtained from $P^5$ by removing some small stars of its ideal vertices. 

The Euclidean polytope $\bar P^5$ has $16+10$ facets: these are the truncated 16 facets of $P^5$ plus 10 additional small hypercubes produced by the truncation of the ideal vertices. It has $16+10 \times 16$ vertices, that are the 16 original finite vertices of $P^5$ plus the vertices of the 10 new hypercubes.

We equip the 16 truncated facets of $\bar P^5$ with the same colouring of the corresponding facets in $P^5$, and leave the small 10 hypercubes uncoloured.
We apply the same colouring technique described above to $\bar P^5$, that is we take $2^8$ copies $\bar P_v^5, v \in (\matZ_2)^8$ of $\bar P^5$ and we glue a facet $F$ of $\bar P_v^5$ with the same facet $F$ of $\bar P_{v+e_i}^5$ if $F$ has colour $i$. The small uncoloured hypercubes are left unglued, as in Figure \ref{dual_cubulation:fig}-(3).

As a result we get a compact piecewise-linear manifold $\bar M^5$ with boundary, whose interior is easily seen to be homeomorphic to $M^5$. 
We now construct a cubulation for $\bar M^5$, that is a decomposition into 5-cubes.
The manifold $\bar M^5$ is tessellated into $2^8$ copies of $\bar P^5$. We fix a barycentric subdivision for $\bar P^5$, which lifts to a barycentric subdivision of the tessellation of $\bar M^5$. 
We consider the vertices $v$ of the \emph{original} (not yet subdivided) tessellation of $\bar M^5$. Their stars $\St(v)$ in the barycentric decomposition form a dual decomposition of $\bar M^5$ into polytopes as in Figure \ref{dual_cubulation:fig}-(4). Since $P^5$ is right-angled, the star $\St(v)$ can be of two types, depending on whether $v$ lies in the interior or in the boundary of $\bar M^5$. Let $\pi(v)$ be the image of $v$ in $\bar P^5$. Then:

\begin{itemize}
\item if $\pi(v)$ is one of the 10 real vertices of $P^5$, then $v$ lies in the interior of $\bar M$ and $\St(v)$ is the barycentric subdivision of a 5-cube, with $v$ at its center;
\item if $\pi(v)$ is one of the additional $10\times 16$ vertices of $\bar P^5$ obtained from the truncation, then $v \in \partial \bar M^5$ and $\St(v)$ is one half of a barycentric subdivision of a 5-cube, with $v$ at its center.
\end{itemize}

See Figure \ref{dual_cubulation:fig}. In both cases we get a 5-cube, and the 5-cubes so obtained intersect along common facets, dual to the edges of the decomposition of $\bar M^5$ into copies of $\bar P^5$. 
We have thus obtained a decomposition of the piecewise-linear manifold $\bar M^5$ into 5-cubes, that is a \emph{cubulation}. 


\subsection{States and moves}
We explain a general strategy to construct interesting piecewise-linear circle-valued maps using the cubulation of $\bar M^5$ just introduced. The strategy is taken from \cite{BB} and \cite{JNW}, with some modifications, and has already been used in \cite{BM, IMM}. It consists of a combinatorial game based on some \emph{states} and \emph{moves}, first introduced by Jankiewicz, Norin, and Wise in \cite{JNW}.

Let a \emph{state} of $P^5$ be the partition of its facets into two subsets, that we denote as I (in) and O (out). Every facet inherits a \emph{status} I or O. Analogously, we define a state of $\bar P^5$ as the partition of its non-cubic facets into I and O, so a state on $P^5$ obviously induces one on $\bar P^5$ and viceversa, and we will pass from one to the other without mention.

Let a \emph{set of moves} $\calS$ be an abritrary partition of the set $\{1,\ldots,8\}$ of colours used for $P^5$. Each set $S\in \calS$ of the partition is a \emph{move}. (This definition is slightly different from that given in \cite{JNW}. The main difference is that here every partition is allowed: we do not require all the facets coloured with some colour in a fixed move $S\in \calS$ to be disjoint.) For instance, the set of moves that we use here is the following:
$$
\calS = \big\{\{1,5\}, \{2,6\}, \{3,7\}, \{4,8\}\big\}.
$$

Moves act on states as follows. Given a move $S\in \calS$ and a state $s$, we build a new state by switching the I/O status of all the facets whose colour belongs to $S$.

We now fix a state $s$, that we call the \emph{initial state}, and a set of moves $\calS$. We now show that the pair $(s,\calS)$ induces an orientation on all the edges of the dual cubulation of $\bar M^5$ described above.

Recall that $M^5$ decomposes into the polytopes $P^5_v$ as $v\in \matZ_2^8$ varies. We assign a state $s_v$ to each polytope $P^5_v$ as follows. We start with the initial state $s$. Then, for every move $S\in \calS$, we calculate the sum 
$$\sigma = \sum_{i\in S} v_i \in \matZ_2$$
where $v_i$ is the $i$-th component of $v\in \matZ_2^8$. If $\sigma=1$ we let $S$ act on $s$, that is we swap all the stati I/O of all the facets of $P^5_v$ having a colour that belongs to $S$. If $\sigma=0$, we do nothing. Let $s_v$ be the final state that we get after all these modifications. Note that in particular we have $s_0 = s$, so the initial state is assigned to $P^5_0$.

If we travel from $P^5_v$ to $P^5_{v+e_i}$ by crossing a facet $F$ coloured with $i$, we notice that the status of $F$ is necessarily changed, because the parity of the corresponding $\sigma$ changes. More precisely: 

\begin{center}
\emph{When we cross $F$, the stati of all the facets whose colour belongs \\
to the same move $S$ containing the colour of $F$ are changed. \\
The stati of the other facets are left unaltered.}
\end{center}

If we interpret I and O as ``In'' and ``Out'', we can orient the facet $F$ of $P_v^5$ transversely, with a vector that goes from the adjacent polytope where the status of $F$ is O to the one where its status is I. We orient the edge of the cubulation dual to $F$ accordingly. See an example in Figure \ref{orientations:fig}-(centre).

We have oriented in this way all the edges of the cubulation that are dual to some facet of some $P_v^5$, that is all the edges that are fully contained in the interior of $\bar M^5$. There are two more types of edges to orient:
\begin{itemize}
\item those that connect an interior vertex of the cubulation with a boundary vertex are oriented towards the boundary;
\item those that are fully contained in $\partial \bar M^5$ are adjacent to a unique square that is not contained in $\partial \bar M^5$, and we orient them like the opposite edge in that square.
\end{itemize}

We have oriented all the edges of the cubulation. The orientation of course depends on the initial state $s$ and on the set of moves $\calS$. An example is shown in Figure \ref{orientations:fig}-(right).

\begin{figure}
 \begin{center}
  \includegraphics[width = 11 cm]{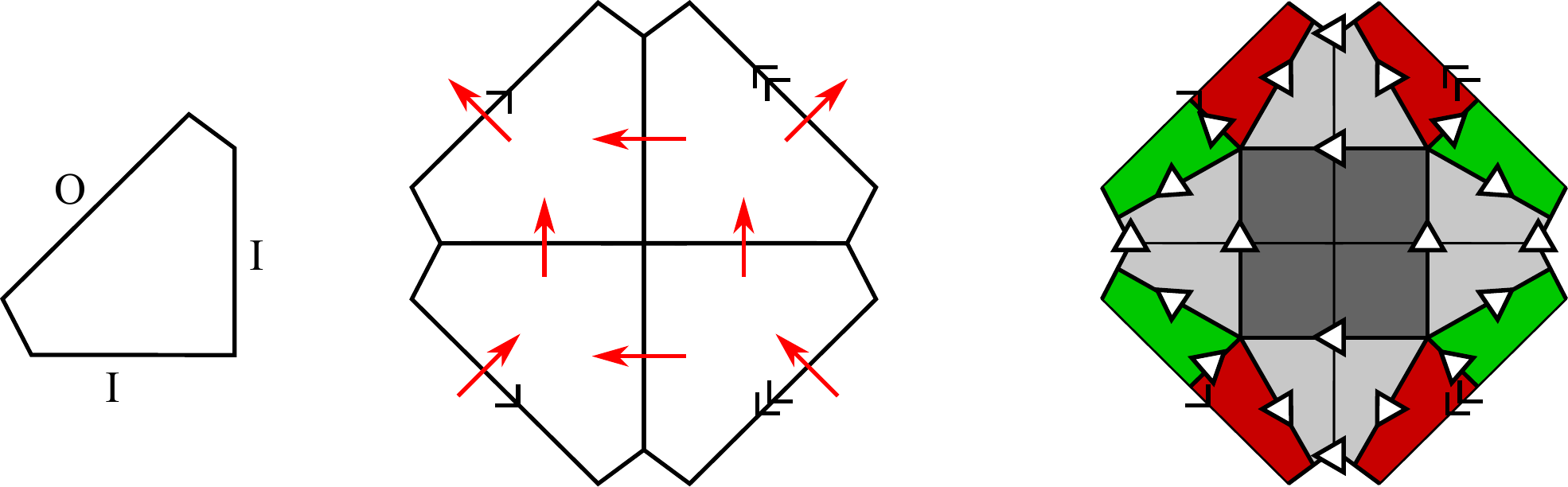}
 \end{center}
 \nota{We consider the coloured polygon of Figure \ref{dual_cubulation:fig}. We fix an initial state $s$ (left), and we take $\calS=\{\{ 1\}, \{2\}\}$ as a set of moves. The facets of the decomposition inherit the transverse orientation shown here (centre). The edges of the dual cubulation also inherit orientations (right). In this case all the squares are coherently oriented.}
 \label{orientations:fig}
\end{figure}

\subsection{Coherent orientations}
We say that an orientation of all the edges of a cubulation is \emph{coherent} if, on every square, opposite edges have the same orientations. For instance, the orientation shown in Figure \ref{orientations:fig}-(right) is coherent.

A coherent orientation is a very useful tool because it defines a circle-valued \emph{diagonal map} as follows: we identify every $k$-cube of the cubulation with $[0,1]^k$, in a way that every edge is oriented like the natural orientation of $[0,1]$. Then we define on the $k$-cube the diagonal map
$$f(x_1, \ldots, x_k) = x_1 + \cdots + x_k.$$ 
All these maps glue to a piecewise-linear map from the cubulation to $\matR/\matZ$. The resulting map is a (circle-valued) \emph{Morse function} in the sense of \cite{BB}. All the vertices of the cubulation are sent to zero, which should be interpreted as the unique singular value of the Morse function.

Summing up, a coherent orientation on the edges defines a piecewise-linear circle-valued Morse function. We thus aim at constructing coherent orientations.

\begin{defn}
A set of moves $\calS$ is \emph{sparse} if for every move $S\in \calS$ the facets of $P^5$ coloured with some $i\in S$ are pairwise disjoint.
\end{defn}

We call a 3-dimensional face of $P^5$ a \emph{ridge}. Let $s$ be an initial state and $\calS$ be a set of moves.

\begin{prop}  \label{sparse:prop}
The orientation on the edges of the cubulation induced by $s$ and $\calS$ is coherent if and only if $\calS$ is sparse.
\end{prop}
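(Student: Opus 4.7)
The strategy is to translate the coherence condition on squares of the cubulation into a purely combinatorial condition on the pair $(s,\calS)$. Each interior square of the cubulation is dual to a ridge of the polytope tessellation, i.e.\ to a codimension-$2$ face $R = F_1 \cap F_2$ of some $P_v^5$, where $F_1,F_2$ are adjacent facets of $P^5$ with colours $c_1, c_2$. Since $P^5$ is right-angled, four copies of $P^5$ meet at $R$, namely $P_v^5, P_{v+e_{c_1}}^5, P_{v+e_{c_2}}^5, P_{v+e_{c_1}+e_{c_2}}^5$, so the dual square has four edges: two dual to copies of $F_1$ (between $P_v^5, P_{v+e_{c_1}}^5$ and between $P_{v+e_{c_2}}^5, P_{v+e_{c_1}+e_{c_2}}^5$), and two dual to copies of $F_2$. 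Coherence on this square means that the two $F_1$-edges carry parallel orientations, and likewise for $F_2$.

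The key computation is to make the orientation of an edge dual to a facet $F$ of colour $c$ in $P_w^5$ explicit. By the construction of the states $s_w$, the status of $F$ in $P_w^5$ equals its initial status in $s$ XOR-ed with
\[
\sigma_S(w) = \sum_{i \in S} w_i \in \matZ_2,
\]
where $S$ is the unique move containing $c$. The two $F_1$-edges are oriented consistently in the square if and only if the I/O status of $F_1$ agrees in $P_v^5$ and in $P_{v+e_{c_2}}^5$, i.e.\ if and only if $\sigma_{S_1}(v) = \sigma_{S_1}(v+e_{c_2})$, where $S_1$ is the move containing $c_1$. This equality holds for every $v$ precisely when $c_2 \notin S_1$, i.e.\ when $c_1$ and $c_2$ lie in distinct moves. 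A symmetric analysis of the $F_2$-edges yields the same condition, so coherence on the square dual to $R$ amounts to: the colours of $F_1$ and $F_2$ belong to different moves of $\calS$.

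Running this across all ridges, coherence on every interior square is equivalent to the statement that no two adjacent facets of $P^5$ share colours in a common move $S \in \calS$. Since two facets of $P^5$ share a ridge exactly when they are adjacent, this condition is equivalent to sparsity of $\calS$: the facets with colours in any fixed $S$ are pairwise non-adjacent, hence (as one checks directly in $P^5$, where non-adjacent facets of the same move-class are actually disjoint) pairwise disjoint. For the remaining squares, the orientations on boundary edges are defined by transport along the unique adjacent non-boundary square, so coherence on interior squares automatically propagates to boundary squares.

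The main obstacle is the bookkeeping in the second paragraph, namely identifying precisely which edges of the square correspond to which pair of polytopes and which facet, and verifying that the $\matZ_2$-linear nature of $\sigma_S$ makes coherence independent of the base vertex $v$. A secondary subtlety is confirming that the ``disjoint'' formulation of sparsity in the definition matches the ``non-adjacent'' condition that the ridge analysis produces; this reduces to checking that two facets of $P^5$ whose colours share a move cannot meet only at an ideal vertex, which is straightforward from the explicit colouring in Figure~\ref{P5_colouring:fig}.
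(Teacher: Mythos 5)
Your proof is correct and follows essentially the same route as the paper's: reduce to interior squares, recognise each as dual to a ridge $F_1 \cap F_2$ of some $P_v^5$, and observe (via the $\matZ_2$-linear status function $\sigma_S$) that coherence on that square is equivalent to the colours of $F_1$ and $F_2$ lying in distinct moves, which over all ridges gives sparsity. The only slight misstatement is in your final remark: the match between ``pairwise disjoint'' and ``pairwise non-adjacent'' is not a feature of the chosen colouring but of $P^5$ itself --- two facets of $P^5$ that share no ridge may share an ideal vertex, but ideal vertices lie at infinity, so as subsets of $\matH^5$ such facets are disjoint and the two formulations agree.
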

\begin{proof}
By construction, all the squares that intersect $\partial \bar M^5$ in a single edge are oriented coherently. All the squares $Q$ that are contained in $\partial \bar M^5$ are oriented like the opposite square $Q'$ in the unique cube that intersects $\partial \bar M^5$ in $Q$, and $Q'$ is contained in the interior of $\bar M^5$. 

\begin{figure}
 \begin{center}
  \includegraphics[width = 9 cm]{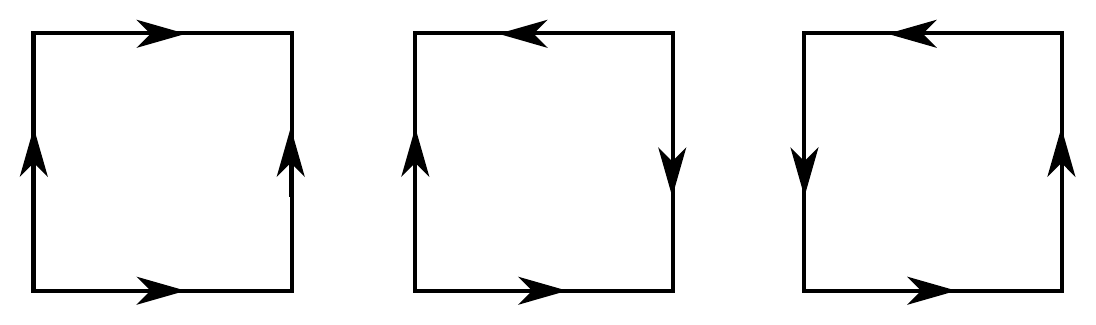}
 \end{center}
 \nota{Three types of squares with oriented edges. Only the first is oriented coherently.}
 \label{squares:fig}
\end{figure}

Therefore it suffices to consider the squares that are contained in the interior of $\bar M^5$. Each such is dual to some codimension-2 stratum of the polyhedral decomposition of $M^5$, which projects to a ridge of $P^5$, intersection of two facets $F_1, F_2$. If the colours of $F_1$ and $F_2$ belong to distinct moves in $\calS$, the square is oriented coherently, because when we cross $F_1$ the status of $F_2$ is left unchanged. If not, the square has one of the two forbidded orientations of Figure \ref{squares:fig}-(center and right).
\end{proof}

\subsection{Ascending and descending links}
We keep using the terminology of \cite{BB}. A cube complex is a particular kind of \emph{affine cell complex}, see \cite[Definition 2.1]{BB}, where every $k$-cube is given the structure of a Euclidean unit cube.
Let $f\colon X \to \matR/\matZ$ be a circle-valued \emph{Morse function} in the sense of \cite[Definition 2.2]{BB} defined on some affine cell complex $X$. Let $\lk(v)$ be the link of a vertex $v$ in $X$. Recall from \cite[Definition 2.4]{BB} that the \emph{ascending} (\emph{descending}) \emph{link} $\lk_\uparrow(v)$ ($\lk_\downarrow(v)$) is the union of the links of all the cells incident to $v$ where $f$ has a minimum (maximum) at $v$.

An affine cell complex has a natural PL structure \cite[Page 448]{BB}. When we decompose a smooth manifold $W$ as an affine cell complex we always suppose implicitly that the PL structures coming from the smooth and the cell complex structures are compatible. This will be the case with the examples considered here in this paper.

\begin{teo}
Let $W$ be a compact smooth manifold of dimension $n\leq 5$, possibly with boundary. Let $W$ be decomposed as an affine cell complex and $f\colon W \to \matR/\matZ$ be a circle-valued Morse function in the sense of \cite{BB}. If the ascending and descending links at all the vertices are collapsible the map $f$ is homotopic to a smooth fibration.
\end{teo}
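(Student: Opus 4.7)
The strategy is to first replace $f$ by a homotopic piecewise-linear fibration $f'$, and then smooth $f'$ to obtain a smooth fibration. This two-step pattern is standard in Bestvina--Brady type arguments and is the one implicit in \cite{JNW}.

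\textbf{Step 1: PL fibration.} The map $f$ is a Bestvina--Brady Morse function, so its non-regular behaviour is concentrated at the vertices of the cell structure. At each vertex $v$ the link $\lk(v)$ is a PL $(n-1)$-sphere (since $W$ is an $n$-manifold with $n \leq 5$) and contains the ascending and descending subcomplexes $\lk_\uparrow(v)$ and $\lk_\downarrow(v)$. Standard Bestvina--Brady theory says that if both are contractible, then crossing the critical value $f(v)$ is a homotopy equivalence of level sets; here we use the stronger hypothesis of collapsibility to upgrade this homotopical statement to a local product structure for $f$. The key PL-topological input is that a regular neighbourhood of a collapsible subcomplex of a PL manifold is a PL ball. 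Applying this inside $\St(v)$ to the cones on $\lk_\uparrow(v)$ and $\lk_\downarrow(v)$, we obtain two PL balls in $\St(v)$ where $f$ has its minimum along a well-defined PL arc from $v$ and its maximum along another such arc. Concatenating these two arcs produces a PL axis through $v$. A PL isotopy supported in $\St(v)$ straightens $f$ to be the projection onto this axis. Performing these straightenings simultaneously at all vertices (with pairwise disjoint supports) yields a PL locally trivial fibration $f' \colon W \to \matR/\matZ$ homotopic to $f$.

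\textbf{Step 2: smoothing.} In the range $n \leq 5$ the PL and smooth categories are tightly linked: every PL manifold admits a compatible smooth structure, essentially unique, by classical smoothing theory (Munkres, Hirsch--Mazur, Moise in dimension $3$). In particular, the PL fibre $F'$ has dimension $\leq 4$ and inherits a smooth structure, and the PL local triviality of $f'$ can be lifted to smooth local triviality, giving a smooth fibration $f''$ homotopic to $f'$ and hence to the original $f$.

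\textbf{Main obstacle.} The delicate step is Step 1, specifically the construction of the PL straightening isotopy at each vertex from the purely combinatorial data of a collapse. A collapse of $\lk_\uparrow(v)$ to a point induces, by taking regular neighbourhoods in $\St(v)$, a PL deformation retraction of the upper star of $v$ onto an ascending PL arc; symmetrically for the descending link one obtains a descending arc. These two arcs serve as the coordinate axis along which $f$ becomes linear after the isotopy. The global compatibility of the local straightenings (so that they assemble into a single PL fibration, and not just a collection of local ones) is routine but requires processing vertices in an order consistent with the cell structure and shrinking supports as needed. The hypothesis $n \leq 5$ is used throughout to keep the links in the dimension range where collapsibility has direct geometric content via PL regular-neighbourhood theory.
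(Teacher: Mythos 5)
Your high-level two-step plan (first obtain a PL fibration, then smooth it) is the same as the paper's, and your Step~2 matches the paper exactly: the paper appeals to Hirsch--Mazur and Munkres for the PL-to-smooth upgrade, and this is precisely where the dimension bound $n\le 5$ enters. However, your last remark that ``$n\le 5$ is used throughout to keep the links in the dimension range where collapsibility has direct geometric content via PL regular-neighbourhood theory'' is not right: regular neighbourhood theory and the ``collapsible implies regular neighbourhood is a ball'' fact hold in all dimensions. In the paper the only use of $n\le 5$ is the smoothing step, and the PL part of the argument is dimension-independent.

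The genuine gap is in your Step~1. The paper (quoting the proof of Theorem~15 of Battista--Martelli) argues globally: one shows that the preimage of any closed interval with both endpoints away from the critical value $0\in\matR/\matZ$ is PL a product (compatibly with $f$ on the boundary levels), and then homotopes $f$ accordingly. Your version attempts a purely local argument at each vertex, and the key step is stated incorrectly. You write that one ``obtains two PL balls in $\St(v)$ where $f$ has its minimum along a well-defined PL arc from $v$ and its maximum along another such arc,'' and then you concatenate these arcs into an ``axis'' and straighten $f$ to the projection onto it. But $f$ restricted to the ascending star $v * \lk_\uparrow(v)$ attains its minimum at the single point $v$, not along an arc; there is no ``ascending/descending manifold'' in this PL picture, and the analogy with the straightening of a smooth Morse singularity along a separatrix does not apply. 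What regular-neighbourhood theory actually gives you, applied to the collapsible subcomplexes $\lk_\uparrow(v)$ and $\lk_\downarrow(v)$ inside the PL sphere $\lk(v)$, is that suitable regular neighbourhoods of these subcomplexes are PL $(n-1)$-balls; this feeds into showing that as $t$ crosses $0$ the level set $f^{-1}(t)$ changes by a PL homeomorphism and, with more care, that $f^{-1}([-\varepsilon,\varepsilon])$ is a PL product over $[-\varepsilon,\varepsilon]$. That global product statement is the actual content of the cited theorem, and your local-straightening rephrasing does not currently establish it. Because your isotopy is supported in $\St(v)$, it cannot change $f$ on $\partial\St(v)$, so the claimed result---that $f|_{\St(v)}$ becomes a linear projection after the isotopy---would force strong constraints on $f|_{\lk(v)}$ that you have not verified. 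Either replace Step~1 with the interval-preimage product argument, or supply a precise statement and proof of the local straightening lemma you are invoking.
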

\begin{proof}
The same proof as \cite[Theorem 15]{BM} works here: the counterimage of every closed interval with endpoints both distinct from $0\in \matR/\matZ$ is piecewise-linearly a product. We can thus homotope $f$ so that it is a piecewise-linear fibration, that is the counterimage of every interval is a product. The dimension bound $n\leq 5$ ensures that $f$ can be smoothened to a fibration \cite{HM, Mun}.
\end{proof}

Our goal is now clear: we would like to find an initial state $s$ and a set of moves $\calS$ for $P^5$ such that
\begin{enumerate}
\item $\calS$ is sparse, and
\item all the ascending and descending links are collapsible polyhedra.
\end{enumerate}

Unfortunately, it turns out that the only sparse $\calS$ is the partition into singletons 
$$\calS_0 = \{\{1\}, \{2\}, \{3\}, \ldots, \{8\}\}.$$
We initially picked $\calS_0$ as a set of moves and searched with a computer for an initial state $s$ that would fulfill the requirement (2), but we could not find any. After some time, we realised that no such state $s$ can exist! One can prove quite easily that with such a $\calS_0$ every state $s$ produces a circle-valued map $f$ that is homotopically trivial on the large cusps (this phenomenon occurs for instance for the state chosen in \cite{IMM}), and hence it will never be homotopic to a fibration.

Despite these negative news, we could find one interesting pair $(s, \calS)$ which fulfills  (2), although $\calS$ is not sparse. As anticipated above, the set of moves is
$$
\calS = \big\{\{1,5\}, \{2,6\}, \{3,7\}, \{4,8\}\big\}.
$$
We now describe the initial state $s$. We will later deal with the fact that $\calS$ is not sparse: this problem will be resolved by subdividing some cubes of the cube complex into prisms.

\subsection{The balanced states}
Every move $S\in\calS$ is of type $S=\{t,t+4\}$ for some fixed $t$. There are four facets of $P^5$ whose colour is in $S$, and we call them a \emph{quartet}. The 16 facets of $P^5$ are thus subdivided by $\calS$ into four quartets.

\begin{figure}
 \begin{center}
  \includegraphics[width = 7 cm]{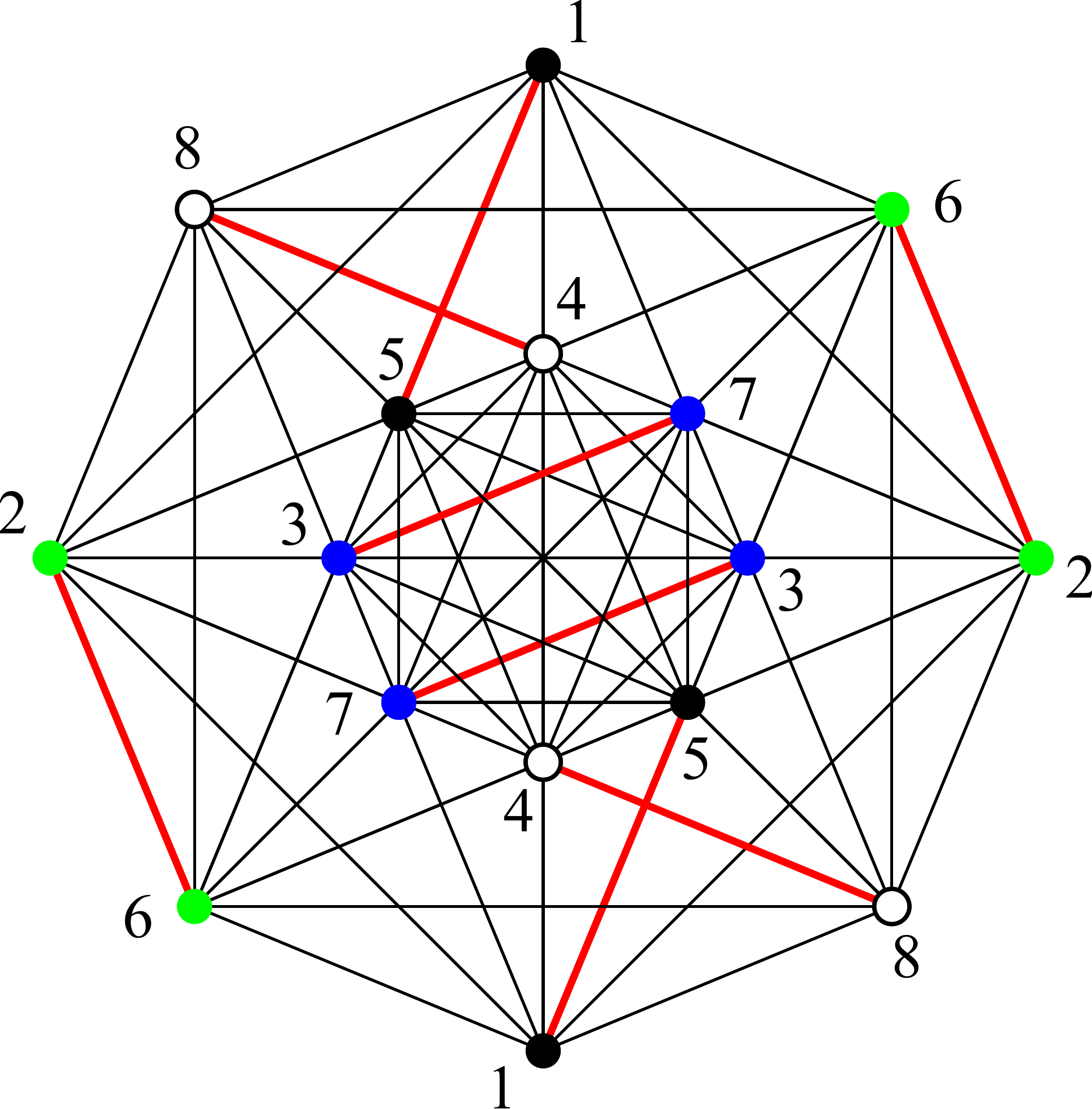}
 \end{center}
 \nota{The four quartets of vertices, depicted with four different colours. Each quartet is subdivided into two pairs of adjacent facets. The adjacencies are indicated here as red edges.}
 \label{quartets:fig}
\end{figure}

Figure \ref{quartets:fig} shows that each quartet is subdivided into two pairs of adjacent facets, each pair consisting of facets with distinct colours $t$ and $t+4$ whose coordinates $\pm \pm \pm \pm \pm$ differ only at the positions $t$ and $5$, see Figure \ref{P5:fig}. 
We call one such pair an \emph{adjacent pair}. So each quartet is subdivided into two adjacent pairs. We say that a state is \emph{balanced} if, on each quartet, it assigns the status I to one adjacent pair and O to the other. By applying the moves in $\calS$ we act transitively on the set of all the $2^4$ balanced states.

\begin{figure}
 \begin{center}
  \includegraphics[width = 7 cm]{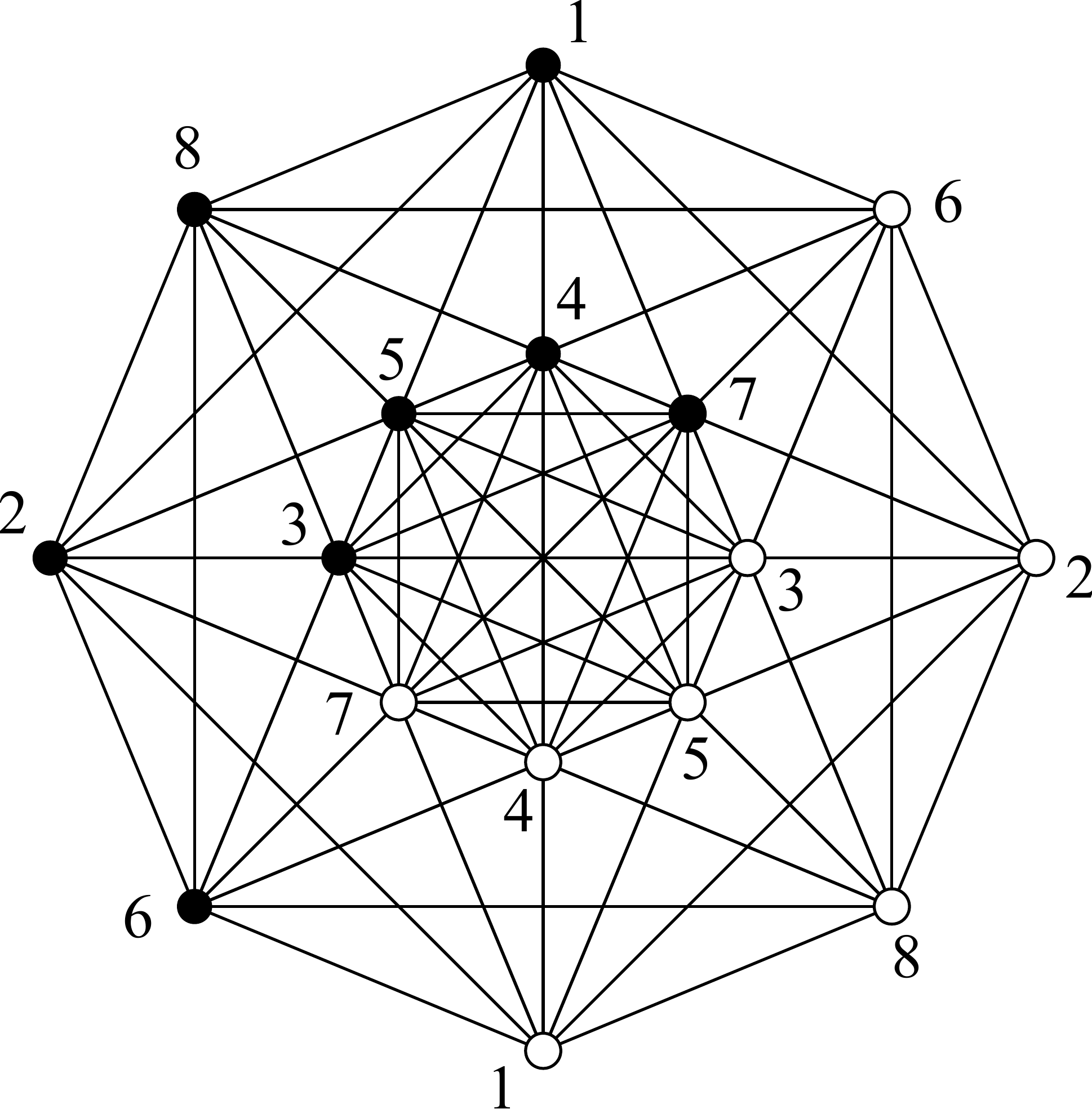}
 \end{center}
 \nota{The chosen initial balanced state $s$ for $P^5$. The black (white) dots indicate the status I (O).}
 \label{s:fig}
\end{figure}

We let the initial state $s$ be the balanced state shown in Figure \ref{s:fig} (any other balanced state would do the job, we fix one just for convenience). 
We use the set of moves $\calS$ to assign a state $s_v$ on $P^5_v$ for each $v\in \matZ_2^8$ as explained before. The states $s_v$ assigned to $P^5_v$ as $v\in \matZ_2^8$ varies are precisely all the $2^4$ balanced states on $P^5_v$, each repeated $2^4$ times. 

The stati in $P^5_v$ of the four facets coloured with either $t$ or $t+4$ are those of the initial state $s$ if $v_t+v_{t+4}$ is even, and reversed otherwise. Therefore the polytopes $P^5_v$ and $P^5_w$ will inherit the same state $s_v=s_w$ precisely when 
$$v_t+v_{t+4} \equiv w_t+w_{t+4} \mod 2 \quad \forall t.$$

\subsection{Sixteen useful symmetries of $P^5$} 
We now introduce 16 symmetries of $P^5$ that will be particularly useful. Recall that $\Isom(P^5)$ has order $1920$ and consists of all the maps
$$(x_1,x_2,x_3,x_4,x_5) \longmapsto (\pm x_{\sigma(1)}, \pm x_{\sigma(2)}, \pm x_{\sigma(3)}, \pm x_{\sigma(4)}, \pm x_{\sigma(5)})$$
where $\sigma \in S_5$ is any permutation and there is an even number of minus signs.
We identify $\matR^5$ with $\matH \times \matR$ where $\matH$ is the quaternions space, and consider the group 
$$Q_8 = \{\pm 1, \pm i, \pm j, \pm k\}.$$ 
If we let $Q_8$ act on the first factor of $\matH \times \matR$ by left multiplication, we obtain the following isometries of $P^5$:
\begin{align*}
L_{\pm 1}\colon (x_1,x_2,x_3,x_4, x_5) \longmapsto (\pm x_1, \pm x_2, \pm x_3, \pm x_4, x_5), \\
L_{\pm i} \colon (x_1,x_2,x_3,x_4, x_5) \longmapsto (\mp x_2, \pm x_1, \mp x_4, \pm x_3, x_5), \\
L_{\pm j} \colon (x_1,x_2,x_3,x_4, x_5) \longmapsto (\mp x_3, \pm x_4, \pm x_1, \mp x_2, x_5), \\
L_{\pm k} \colon (x_1,x_2,x_3,x_4, x_5) \longmapsto (\mp x_4, \mp x_3, \pm x_2, \pm x_1, x_5).
\end{align*}
Here $L_q$ denotes left-multiplication by $q\in Q_8$ on the first factor. We can thus see $Q_8$ as a subgroup of $\Isom(P^5)$. We are also interested in the isometric involution
$$\iota \colon (x_1,x_2,x_3,x_4,x_5) \longmapsto (x_1,-x_2,-x_4,-x_3,-x_5).$$

By direct inspection we see that $\iota$ normalises $Q_8$ and that the subgroup $R_{16}$ generated by $Q_8$ and $\iota$ has order 16. One may also prove that $R_{16}$ is isomorphic to the semidihedral group of order 16, but we will not need that. The group $R_{16}$ is particularly useful because of the following.

\begin{prop} \label{16:prop}
The group $R_{16}$ preserves (as partitions) both the colouring and $\calS$. It acts freely and transitively on the 16 facets and on the 16 balanced states of $P^5$. 
\end{prop}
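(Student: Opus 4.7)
First, to prove that $R_{16}$ preserves both the colouring and $\calS$ it suffices to check the generators $L_q$ (for $q\in Q_8$) and $\iota$. Each $L_q$ acts on the first four coordinates by quaternionic left multiplication (a signed permutation of the axes) and fixes $x_5$; since the colour relation identifies $(v,x_5)$ with $(-v,x_5)$ on the first four coordinates and $L_q$ commutes with $v\mapsto -v$, colour classes are preserved. Moreover $L_q$ permutes the four axes $\{1,2,3,4\}$ via the quotient $Q_8\to Q_8/\{\pm 1\}\cong V_4\le S_4$; since the quartet indexed by $t$ is characterised by having its adjacent pairs differ at positions $t$ and $5$, this yields a permutation of the four quartets. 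For $\iota$, direct substitution in $\iota(x_1,\ldots,x_5)=(x_1,-x_2,-x_4,-x_3,-x_5)$ shows that $\iota$ sends every colour pair to another colour pair (with $x_5$ negated) and permutes the axes as the transposition $(3\,4)$, so it preserves both the colouring and $\calS$.

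Next, I show that $R_{16}$ acts regularly on the 16 facets by explicitly computing the orbit of $(1,1,1,1,1)$. The eight vectors $L_q(1,1,1,1,1)$ for $q\in Q_8$ are precisely the eight facets with $x_5=+1$, while $\iota(1,1,1,1,1)=(1,-1,-1,-1,-1)$ has $x_5=-1$ and its $Q_8$-orbit gives the remaining eight facets. Thus $R_{16}$ is transitive on the 16 facets; since $|R_{16}|=16$ equals the number of facets, the action is also free.

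Finally, to prove that $R_{16}$ acts regularly on the 16 balanced states, I encode a balanced state as an element of $(\matZ/2)^4$, one bit per quartet recording which of the two adjacent pairs carries status I. The element $L_{-1}$ negates the first four coordinates, swapping each colour pair and hence swapping the two adjacent pairs inside every quartet; consequently $L_{-1}$ flips all four bits simultaneously. The element $\iota$ sends the adjacent pair $\{(1,1,1,1,1),(-1,1,1,1,-1)\}$ in quartet $1$ to $\{(1,-1,-1,-1,-1),(-1,-1,-1,-1,1)\}$, the other adjacent pair in quartet $1$, so $\iota$ flips the first bit while permuting quartets $3$ and $4$. Analogous coordinate computations for $L_i,L_j,L_k$ pin down the full affine action of $R_{16}$ on $(\matZ/2)^4$ and show that starting from any balanced state one can reach all sixteen; transitivity together with $|R_{16}|=16$ then forces freeness. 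The main obstacle is the bookkeeping in this last step: one has to track, for each generator of $R_{16}$, the precise effect on the bit of every quartet after the quartet permutation, which reduces to a finite list of explicit coordinate computations using only the formulas for $L_q$ and $\iota$ and the description of adjacent pairs in terms of positions $t$ and $5$.
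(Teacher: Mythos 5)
Your arguments for the first two claims---that $R_{16}$ preserves the colouring and $\calS$, and that it acts freely and transitively on the $16$ facets---are essentially those of the paper: you compute the $Q_8$-orbit of $(1,1,1,1,1)$ to get the eight facets with $x_5=+1$, note that $\iota$ reverses the sign of $x_5$, and deduce freeness from the order count. Your observations that $L_q$ commutes with $v\mapsto -v$ and that the axis permutation factors through $Q_8/\{\pm1\}\cong V_4$ are correct ways to see that the colouring classes and the quartets are sent to one another.

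Where you genuinely diverge is the third claim, the free transitive action on the $16$ balanced states. You propose to encode each balanced state as a bit vector in $(\matZ/2)^4$ and to determine the (affine) action of each generator by explicit coordinate bookkeeping, and you yourself acknowledge that you have only carried this out for $L_{-1}$ and partially for $\iota$; the claim that ``analogous coordinate computations for $L_i,L_j,L_k$ pin down the full affine action and show that one can reach all sixteen states'' is precisely the part of the argument that is left unverified, so as written this is an outline rather than a proof. The strategy would in principle succeed, but the paper avoids all of it with a structural shortcut that you should be aware of: there is a natural bijection between balanced states and facets, sending a balanced state to the unique facet with status I that is adjacent to all other facets with status I (for the state $s$ of the figure this is the facet of colour $8$). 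This correspondence is visibly equivariant under $\Isom(P^5)$, so free transitivity on balanced states follows at once from free transitivity on facets, with no cocycle computations at all. Before committing to tracking the translation part $g\mapsto t(g)\in(\matZ/2)^4$ across five generators and verifying its image has full size, it is worth pausing to look for an equivariant identification with a set whose $R_{16}$-action is already understood---that is exactly what the paper's observation buys.
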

\begin{proof}
We can verify easily that $Q_8$ preserves both the colouring and the partition, and the same holds for $\iota$. The group $Q_8$ acts freely and transitively on the 8 facets represented by a vector $(\pm 1, \pm 1, \pm 1, \pm 1, \pm 1)$ with the same fifth component. The element $\iota$ switches the sign of the fifth component, so $R_{16}$ acts transitively on all the 16 facets.

To deduce that $R_{16}$ acts transitively on the 16 balanced states, we note that there is a natural 1-1 correspondence between balanced states and facets, defined as follows: a balanced state is uniquely determined by the unique facet with status I that is incident to all the other facets with status I. For instance, in Figure \ref{s:fig} such a facet is the one with colour 8 and status I.
\end{proof}

Every isometry $\varphi \in R_{16}$ permutes the set of colourings $\{1,\ldots, 8\}$ and sends the $i$-coloured pair of facets to the $\varphi(i)$-coloured pair, either by preserving or by reversing the status of both. We encode this information by adding a bar over the $i$-th number on the permutation string if the stati are reversed. The identity of course acts as the permutation
$(1\ 2\ 3\ 4\ 5\ 6\ 7\ 8)$ and by direct inspection we see that $L_{-1}$ acts like
$$(\bar 1\ \bar 2\ \bar 3\ \bar 4\ \bar 5\ \bar 6\ \bar 7\ \bar 8).$$
We also check that $L_i, L_j, L_k$ act respectively as
$$(2\ 1\ 4\ 3\ 6\ 5\ 8\ 7), \qquad (\bar 3\ 4\ \bar 1\ 2\ \bar 7\ 8\ \bar 5\ 6), \qquad (4\ \bar 3\ \bar 2\ 1\ 8\ \bar 7\ \bar 6\ 5)
$$
while the involution $\iota$ acts as
$$(\bar 5\ 6\ 8\ 7\ \bar 1\ 2\ 4\ 3).$$
The position of the bars depends on the choice of our initial balanced state $s$, but the following fact is true for every possible choice: the isometries in $Q_8$ are precisely those with a number of bars that is divisible by four.

\subsection{The ascending and descending links are collapsible} 
Let a \emph{$k$-octahedron} be the regular polytope dual to the $k$-cube (sometimes also called $k$-orthoplex). The \emph{Gosset polytope} $1_{21}$ is a semiregular Euclidean polytope that is combinatorially dual to $P^5$, discovered by Gosset \cite{G} in 1900. It has 26 facets: 16 facets are 4-simplexes dual to the real vertices of $P^5$, while 10 facets are 4-octahedra dual to the ideal vertices of $P^5$. 

The 1-skeleton of $1_{21}$ is the adjacency graph of $P^5$. If we consider the boundary of $1_{21}$ and remove the ten 4-octahedra from it, we are left with a 4-dimensional simplicial complex $K$ which is isomorphic to the flag simplicial complex generated by the adjacency graph of $P^5$.

Given a state on $P^5$, we may define its \emph{ascending} (\emph{descending}) \emph{link} as the subcomplex of $K$ spanned by the vertices dual to the facets with status O (I). 

Here is a crucial property of the balanced states:

\begin{prop} \label{balanced:collapsible:prop}
On a balanced state, the ascending and descending links are both collapsible.
\end{prop}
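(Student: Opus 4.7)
The plan is to exploit the $R_{16}$-symmetry to reduce to a single case, and then exhibit an explicit collapsing sequence for the resulting subcomplex of $K$.

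First, I would observe that swapping the two statuses I and O on a balanced state yields another balanced state, since each quartet still contains one adjacent pair of each status. Combined with Proposition \ref{16:prop}, which provides a free transitive action of $R_{16}$ on the $16$ balanced states, this shows that all ascending and descending links of balanced states are pairwise isomorphic as simplicial complexes. It therefore suffices to prove collapsibility of, say, the descending link $L \subset K$ of the initial state $s$ fixed in Figure \ref{s:fig}.

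Next, I would identify explicitly the 8 facets of $P^5$ with status I in $s$, and compute the flag subcomplex $L$ they span. Since $K$ is the flag complex on the adjacency graph of $P^5$, and since two facets of $P^5$ are adjacent if and only if their $\pm\pm\pm\pm\pm$ labels differ in exactly two signs, this is a finite explicit calculation: one reads off the induced subgraph on the 8 status-I vertices from Figure \ref{P5:fig} and takes its clique complex. The balanced condition puts strong constraints on this subgraph (each quartet contributes precisely one edge, namely the adjacent pair it designates with status I), so $L$ has a rather rigid combinatorial description.

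Finally, I would exhibit a sequence of elementary collapses reducing $L$ to a point. The natural strategy is to locate a vertex $v \in L$ that admits a free face in its star (or, equivalently, whose link in $L$ is itself collapsible), collapse the corresponding pair, and iterate. A cleaner variant is to recognise $L$ either as a cone (in which case collapsibility is immediate) or as a join or iterated cone built from small pieces whose collapsibility can be verified by hand, relying on the residual symmetries of $s$ to cut down the combinatorics.

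The main obstacle I expect is the combinatorial verification at this last step: while producing $L$ from $s$ is unambiguous, finding an explicit collapsing order requires checking a number of cases which may become opaque without a good organising principle. The symmetry reduction above limits us to a single subcomplex on $8$ vertices, so a worst-case check is feasible by hand or by computer; the challenge is to package it in a conceptually clean way, for instance by recognising $L$ as a cone over a visibly collapsible link of one well-chosen vertex.
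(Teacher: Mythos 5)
Your proposal follows essentially the same route as the paper: reduce to the fixed balanced state $s$ via the transitive $R_{16}$-action of Proposition \ref{16:prop}, then check collapsibility of that one case, ideally by exhibiting a cone point. The cone structure you anticipate is indeed the whole content of the paper's verification: the facet coloured $8$ with status I (respectively O) is adjacent to all seven other facets of the same status, so each link is a cone on that vertex and collapses to it immediately, with no need for the fallback case-by-case collapse you sketch.
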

\begin{proof}
It suffices to prove this for the balanced state $s$ of Figure \ref{s:fig}, since any other balanced state is related to it by an isometry of $P^5$ by Proposition \ref{16:prop}.
The collapsibility of this state was already noted in \cite{IMM}. The ascending and descending links both collapse to the vertex coloured by 8, because it is connected to every other vertex (so every maximal simplex contains it). 
\end{proof}

We now show that the same holds at the ideal vertices of $P^5$. The link of an ideal vertex $v$ of $P^5$ is a 4-cube, and it inherits a state from the initial state $s$ of $P^5$: every facet of the 4-cube gets the status I/O of the corresponding facet of $P^5$. The ascending and descending links of the 4-cube are defined analogously, as subcomplexes of the dual 4-octahedron.

\begin{prop} \label{vertices:coll:prop}
On a balanced state, the ascending and descending links at every ideal vertex of $P^5$ are collapsible.
\end{prop}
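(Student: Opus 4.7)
The plan is to reduce collapsibility to a simple criterion on antipodal pairs of facets at $v$, and then to handle separately the Type 1 vertices $\pm e_i$ for $i \leq 4$ and the Type 2 vertices $\pm e_5$. For the criterion itself I would note that the 4-cube link at $v$ has 8 facets in bijection with the 8 facets of $P^5$ incident to $v$, and its dual is a 4-octahedron $O$ whose 8 vertices split into 4 antipodal pairs — two vertices being joined by an edge iff they are not antipodal iff the corresponding facets of $P^5$ are adjacent. The ascending (resp.\ descending) link at $v$ is the full subcomplex of $O$ spanned by the vertices of status O (resp.\ I), and has the form $V_1 \ast V_2 \ast V_3 \ast V_4$ where $V_j$ is its intersection with the $j$-th antipodal pair. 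Such a join is a cone, hence collapsible, whenever some $V_j$ is a single vertex. Therefore both links are collapsible as soon as there is a \emph{mixed} antipodal pair at $v$, meaning one containing both an I-status and an O-status facet.

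For the Type 2 vertices $v = \pm e_5$ the argument is quick: the 4 antipodal pairs in the link coincide with the 4 colour pairs present at $v$, because the two facets of a given colour share the fifth coordinate (equal to that of $v$) and differ by negating the other four, hence differ in all four link coordinates. In a balanced state any two same-colour facets carry opposite stati — each adjacent pair inside their quartet contains one facet of each colour and the two adjacent pairs receive opposite stati — so every antipodal pair at $\pm e_5$ is mixed, and in fact each of the ascending and descending links is a 3-simplex.

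For the Type 1 vertices $v = \pm e_i$ with $i \leq 4$ I plan to establish the following dichotomy for each quartet $Q$: the two facets of $Q$ at $v$ are either adjacent in $P^5$ (lying in the same adjacent pair of $Q$ and sharing their status) or they differ in all four link coordinates (lying in opposite adjacent pairs of $Q$ and carrying opposite stati, which produces a mixed antipodal pair in $O$). Writing $k(Q) \in \{1,2,3,4\}$ for the coordinate along which the two adjacent colours of $Q$ differ, the second alternative occurs at $v = \pm e_i$ precisely when $k(Q) = i$. By direct inspection of the colouring of Figure \ref{P5_colouring:fig} and of the partition $\calS$ one checks that the four quartets realize all four values of $k$, so at every Type 1 vertex at least one quartet contributes a mixed antipodal pair, and the criterion from the first paragraph applies.

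The hard part is the Type 1 case: at Type 2 vertices the balanced condition together with the coincidence of antipodal pairs and colour pairs delivers collapsibility uniformly, while at Type 1 vertices the argument hinges on the combinatorics of the chosen colouring and of $\calS$. The crucial point to verify is that the four quartets distribute their adjacency coordinates $k(Q)$ across all of $\{1,2,3,4\}$; if the chosen colouring and partition did not achieve this, some Type 1 vertex would fail to have a same-quartet mixed pair, and one would have to fall back on the cross-quartet antipodal pairs — which is why the symmetric choice of Figure \ref{P5_colouring:fig} is essential.
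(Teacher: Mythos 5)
Your proof is correct and follows essentially the same route as the paper's: at each ideal vertex you locate a same-quartet antipodal pair of facets carrying opposite stati, so that both the ascending and descending links are cones in the dual $4$-octahedron and hence collapsible. The paper states this mixed-pair observation uniformly with a reference to the figures, while you make the join criterion explicit and split into the two vertex types with the $k(Q)$ bookkeeping, but the underlying idea is the same.
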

\begin{proof}
By looking at Figures \ref{cusps:fig} and \ref{P5_colouring:fig} we find that every ideal vertex $v$ of $P^5$ is adjacent to two mutually non-adjacent facets of $P^5$ that belong to the same quartet. These two facets are always assigned opposite stati. Therefore in the 4-cube link there are two opposite facets with opposite stati. Hence in the dual 4-octahedron there are two opposite vertices with opposite stati. Both the ascending and descending links collapse onto these two vertices.
\end{proof}

\subsection{The subdivided cell complex} \label{subdivided:subsection}
The initial state $s$ that we have chosen is very nice, but $\calS$ is not sparse. The induced orientation on the edges of the cubulation of $\bar M^5$ will contain some bad squares like in Figure \ref{squares:fig}-(center or right) and there is no way to define on such a square an affine function that is coherent with the orientation of the edges. We solve this problem by subdividing the cube complex into smaller pieces: we cut all the cubes that contain these bad squares into four prisms. 

We first determine the bad squares. Recall that a ridge in $P^5$ is a 3-dimensional face. We call a ridge \emph{bad} if the colours of the adjacent facets belong to the same quartet. The 8 bad ridges of $P^5$ are dual to the 8 red edges shown in Figure \ref{quartets:fig}. The bad squares contained in the interior of $\bar M^5$ are precisely those that are dual to some 3-stratum that projects to a bad ridge of $P^5$, as explained in the proof of Proposition \ref{sparse:prop}. Since the stati of the two facets incident to a bad ridge always coincide, the bad square is as in Figure \ref{squares:fig}-(center). The other bad squares in $\bar M^5$ are those contained in the boundary that are parallel to one interior bad square through a 3-cube.

\begin{figure}
 \begin{center}
  \includegraphics[width = 2.5 cm]{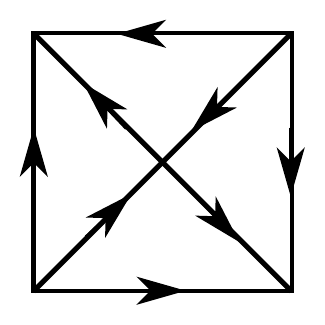}
 \end{center}
 \nota{A subdivision of a bad square into four triangles. On each triangle we have a natural affine height function inducing the orientation of its edges.}
 \label{subdivision:fig}
\end{figure}

The only bad squares are thus as in Figure \ref{squares:fig}-(center). This is good news, because these bad squares can be subdivided into four triangles as in Figure \ref{subdivision:fig}, and on each triangle $T$ there is a natural affine height function $T \to \matR/\matZ$ that induces the orientation of its edges and sends the three vertices to $0, 1/2,$ and $1=0$. The four affine functions match along the edges and the central vertex of the bad square is sent to $1/2$ (as a kind of piecewise-linear saddle point).

Given two cubes $C, C'$ with oriented edges, there is an obvious way to orient all the edges of $C\times C'$, since these are either of the form $e \times \{p'\}$ for some edge $e\subset C$ or $\{p\} \times e'$ for some edge $e'\subset C'$. A product of cubes with coherent orientations is a cube with coherent orientation.

A $k$-cube of the cubulation of $\bar M^5$ is called \emph{good} if it is coherently oriented and \emph{bad} otherwise. We already know that $\bar M^5$ contains only one type of bad squares. More generally, we now prove there is only one type of bad $k$-cubes for every $k\geq 2$.

\begin{prop}
Every $k$-cube of the cubulation of $\bar M$ is oriented either coherently, or as $C \times C'$ for some bad square $C$ and a coherently oriented $(k-2)$-cube $C'$.
\end{prop}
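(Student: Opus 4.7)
The plan is to analyse each $k$-cube $C$ of the cubulation as a product $[0,1]^k$ where the $k$ coordinate directions of $C$ are dual to the $k$ facets of $\bar P^5$ meeting at the codimension-$k$ stratum to which $C$ is dual. These facets are either coloured facets inherited from $P^5$, whose I/O status orients the corresponding edge of $C$, or uncoloured truncation cubes, whose edges are oriented via the boundary convention described earlier in this subsection.

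My first observation would be that a 2-face of $C$ is bad exactly when both of its coordinate directions correspond to coloured facets of $P^5$ that lie in a common quartet. For 2-faces contained in the interior of $\bar M^5$ this is implicit in the proof of Proposition \ref{sparse:prop}; for 2-faces having an edge in $\partial \bar M^5$ the rule that a boundary edge inherits the orientation of the opposite edge in the unique adjacent interior square propagates the same criterion. In particular no pair of coordinate directions involving a truncation cube can create a bad square.

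The heart of the argument is the combinatorial claim that at every codimension-$k$ face of $P^5$ at most one pair among the $k$ incident facets lies in a common quartet; equivalently, no codimension-$k$ face is contained in two distinct bad ridges. The eight bad ridges are dual to the eight red edges of Figure \ref{quartets:fig}, and within each quartet the two bad ridges involve four distinct facets, so globally the eight bad ridges form a perfect matching on the sixteen facets of $P^5$. Using the transitive action of $R_{16}$ on bad ridges (Proposition \ref{16:prop}) I would reduce to inspecting a single bad ridge $R = F_1 \cap F_2$: any codimension-$k$ face containing $R$ is determined by $k-2$ further facets pairwise adjacent to each other and to both $F_1$ and $F_2$, and I would verify that no two of these $k-2$ facets, nor any one of them together with $F_1$ or $F_2$, forms a second in-quartet pair. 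This is a finite verification on the adjacency graph of Figure \ref{P5:fig} refined by the quartet partition of Figure \ref{quartets:fig}.

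Granted the claim, the conclusion is immediate: if the $k$ coordinate directions of $C$ contain no in-quartet pair then every 2-face of $C$ is coherent and $C$ is coherently oriented; otherwise the unique in-quartet pair spans a 2-dimensional subdirection of $C$, and the product structure $C = [0,1]^k$ yields $C = C_2 \times C_{k-2}$, where $C_2$ is the bad square in those two directions and $C_{k-2}$ is the complementary cube, which is coherent because it contains no in-quartet pair. The main obstacle is the combinatorial verification in the previous paragraph: even after the reduction afforded by $R_{16}$ one has to work through sub-cases depending on $k\in\{3,4,5\}$, but the work is finite and readily corroborated by direct inspection of Figures \ref{P5:fig} and \ref{quartets:fig}.
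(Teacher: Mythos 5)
Your argument is correct and matches the paper's proof in essence: both reduce the proposition to the fact that the eight bad ridges of $P^5$ are pairwise disjoint (equivalently, that no clique in the adjacency graph contains two red edges of Figure \ref{quartets:fig}), and then deduce the product decomposition $C = C_2 \times C_{k-2}$ from the uniqueness of the bad direction pair. Your use of the $R_{16}$-action to reduce the disjointness check to a single bad ridge is a slightly more structured way of organising the finite verification that the paper phrases as inspecting the cliques of Figure \ref{quartets:fig}, but the underlying combinatorial fact and the overall strategy are the same.
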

\begin{proof}
This holds because the bad ridges in $P^5$ are pairwise disjoint. This can be proved by checking that all the cliques in Figure \ref{quartets:fig} contain at most one red edge.
\end{proof}

We now subdivide the cube complex decomposition of $\bar M$ by cutting each bad $k$-cube $C \times C'$ into four prisms $T \times C'$, where $T$ is one of the four triangles contained in the bad square $C$ as in Figure \ref{subdivision:fig}.

We get a new affine cell decomposition of $\bar M$. All the edges are oriented, all the affine cells have a natural affine function that is compatible with the edges orientations, and these affine functions all match to a circle-valued Morse function $f\colon \bar M \to \matR/\matZ$.

\begin{teo} \label{final:teo}
The ascending and descending links at all the vertices of the affine cell decomposition are collapsible polyhedra. Therefore $f$ is homotopic to a fibration.
\end{teo}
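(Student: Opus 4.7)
The plan is to verify, vertex by vertex in the subdivided cell decomposition, that both $\lk_\uparrow$ and $\lk_\downarrow$ are collapsible; the theorem cited above then delivers the smooth fibration. The vertices split into two families: (a) the old cubulation vertices $v$, at which $f=0$, and (b) the new centres $p_C$ of the bad squares, at which $f=1/2$.

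For a vertex of type (a), the edges at $v$ are in bijection with the facets of the ambient polytope $\bar P^5_w$, and the Morse orientation coincides with the combinatorial status I/O of the state $s_w$. Hence the ascending link at $v$ is the colour-ascending subcomplex of $K$ determined by $s_w$ (collapsible by Proposition \ref{balanced:collapsible:prop}, since $s_w$ is balanced), together with the $10$ uniformly outgoing truncation vertices, each of which cones over the ascending link at the corresponding ideal vertex of $P^5$ (collapsible by Proposition \ref{vertices:coll:prop}). The resulting union collapses: one first collapses each truncation cone over its collapsible base down to a single edge, collapses that edge, and finally collapses the (now unobstructed) colour-ascending subcomplex; a boundary vertex is treated analogously. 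Passing from the unsubdivided to the subdivided cubulation only performs a stellar subdivision of each simplex of $\lk(v)$ coming from a bad cube at $v$---the ``$C$-edge'' of that simplex is split in two by the new vertex corresponding to the diagonal from $v$ to the nearby centre $p_C$---and stellar subdivisions preserve collapsibility.

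For a vertex of type (b), I exploit the local product structure. In the subdivided $C$ exactly two of the four diagonals at $p_C$ are outgoing (those going to the two corners of $C$ at lifted height $1$) and no triangle realises $p_C$ as its minimum or maximum. Since every bad cube containing $C$ has the form $C\times C'$ with $C'$ coherently oriented, a cell at $p_C$ decomposes as $\sigma_T\times\sigma_{C'}$ and $p_C$ is its minimum precisely when $\sigma_T$ is one of the two outgoing diagonals and the distinguished corner $c'_0$ of $C'$ is the minimum of $\sigma_{C'}$. This forces the join decomposition
\[
\lk_\uparrow(p_C) \;=\; S^0 \ast L',
\]
where $S^0$ consists of the two outgoing diagonal vertices and $L'$ is a ``transverse ascending link'' of $C$, obtained by gluing together the upper corners of $c'_0$ in $C'$ as $C\times C'$ ranges over the bad cubes containing $C$. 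The descending link admits the symmetric decomposition $S^0\ast L''$.

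The main obstacle is to show that $L'$ (and $L''$) is collapsible. The cubes around $C$ form an auxiliary coherently oriented cube complex of dimension at most $3$, obtained by peeling off the bad factor, and $L'$ is precisely the ascending link of the base cell in this auxiliary complex; a direct combinatorial argument using that the bad ridges of $P^5$ are pairwise disjoint (as already exploited in the preceding proposition) should then yield collapsibility. Once $L'$ collapses, $S^0\ast L'=\Sigma L'$ collapses in turn: one first collapses $L'$ through the cone over one apex, reducing the join to a path of length two, which then collapses in two further moves.
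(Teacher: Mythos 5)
Your overall strategy matches the paper's: analyse the links vertex by vertex, splitting into old cubulation vertices and the new bad-square barycenters, and in the second case exploit the join decomposition $\lk_\uparrow(p_C)=S^0 * L'$. The treatment of the old vertices (family (a)) is essentially correct and parallels the paper, including the observation that the subdivision at bad-ridge edges only refines the PL structure without changing the homeomorphism type of the ascending/descending links, and that the truncation vertices produce collapsible cones onto collapsible bases.

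The gap is in family (b). You correctly set up $\lk_\uparrow(p_C)=S^0 * L'$, identify $L'$ as an auxiliary ascending link obtained by ``peeling off'' the bad square factor, and note that once $L'$ is shown collapsible the suspension $S^0*L'$ collapses. But you never actually establish that $L'$ is collapsible, and the reason you suggest is not the right one. The disjointness of the bad ridges is what guarantees that every bad $k$-cube has the single product form $C\times C'$ with only one bad factor (this was the content of the preceding proposition); it tells you nothing about the \emph{states} of the facets adjacent to the bad ridge, which is what determines $L'$. What one actually needs, and what the paper does, is to observe from Figure \ref{quartets:fig} that a bad ridge between facets of colours $t$ and $t+4$ meets six further facets carrying the six remaining colours (one from each of the other three quartets plus the opposite pair), restrict the balanced state to the dual triangular prism, and enumerate the possible prism states (Figure \ref{P3_states:fig}), verifying directly that in each case the ascending and descending links are collapsible (a triangle or two edges sharing a vertex). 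This enumeration is the heart of the argument and is missing from your proof. You also omit the subcase where $p_C$ lies on $\partial\bar M^5$, which requires an extra cone on the link; the paper handles this briefly but it does need to be addressed.
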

\begin{proof}
There are 3 types of vertices $w$ in the affine cell decomposition:  
\begin{enumerate}
\item the barycenters of some polytope $\bar P_v^5$,
\item the barycenters of some 4-cube facet of $\bar P_v^5$, that lie in $\partial \bar M^5$,
\item the barycenters of the bad squares.
\end{enumerate}

The vertices of type (1) and (2) are those of the original cubulation, while those of type (3) were introduced in the subdivision into prisms. We analyse the three cases separately. 
The collapsibility of the ascending and descending links of the vertices of type (1) and (2) is carefully deduced from Propositions \ref{balanced:collapsible:prop} and \ref{vertices:coll:prop}, while for those of type (3) it follows essentially from Figure \ref{P3_states:fig}. Here are the details.

We start with (1).
Let $w$ be the barycenter of $\bar P_v^5$. Its link $\lk(w)$ in the affine cell decomposition is the boundary of the Gosset polytope $1_{21}$, with a couple of further subdivisions: 
\begin{enumerate}
\item[(i)] the 4-octahedra dual to the ideal vertices of $P^5$ are each subdivided into 16 simplexes by adding a central vertex (which indicates an edge pointing towards a vertex of type (2));
\item[(ii)] one additional vertex is added at the barycenter of each edge dual to a bad ridge of $P_v^5$, and all the adjacent simplexes are subdivided in two (the additional vertex indicates an edge pointing towards a vertex of type (3)).
\end{enumerate}

Recall from Proposition \ref{balanced:collapsible:prop} that the ascending and descending links of the state $s_v$ are some collapsible subcomplexes of the complex $K$ obtained from the boundary of $1_{21}$ by removing its 4-octahedra. 
Note that $\lk(w)$ is obtained from $K$ by adding and subdividing some simplexes (these are the modifications (i) and (ii) described above). The ascending and descending links of $w$ in $\lk(w)$ are obtained from those of $s_v$ by the following corresponding modifications:
\begin{enumerate}
\item[(i)] the central vertex of each 4-octahedron has status O because the edges of the cubulation that intersect the boundary in one endpoint are directed towards it by construction; the central vertex is hence added to the ascending link, together with all the simplexes containing it spanned by vertices with status O; by Proposition \ref{vertices:coll:prop}, the link of this new central vertex in the ascending link is collapsible, so this operation is an expansion (the new ascending link collapses onto the old one); 
\item[(ii)] some simplexes are subdivided, but this does not change the piecewise-linear homomorphism type of the ascending and descending links.
\end{enumerate}
The ascending and descending links of $s_v$ are collapsible by Proposition \ref{balanced:collapsible:prop}, so the resulting ascending and descending links at $w$ also are, and we are done. 

We turn to (2). Let $w \in \partial \bar M^5$ be the barycenter of a boundary $4$-cube facet of some polytope $\bar P^5_v$, which corresponds to an ideal vertex of $P^5$. The link $\lk(w)$ of $w$ in the decomposition is a cone over a polytope that is a 4-octahedron with some further subdivisions at the edges dual to the bad ridges. The parts of the ascending and descending links contained in the subdivided 4-octahedron are collapsible by Proposition \ref{vertices:coll:prop}. The vertex of the cone has status I, so the descending link is further modified by some expansion. Therefore the ascending and descending links at $w$ are collapsible.

Finally, we examine the case (3). Let $w$ be the barycenter of some bad square. This can lie either in the interior or in the boundary of $\bar M^5$. If it lies in the interior, it is dual to a 3-stratum that projects to a bad ridge of $P^5$. The link $\lk(w)$ of $w$ in the decomposition is the join of a square (dual to the bad square) and a triangular prism (dual to the ridge). The square has vertices with status I,O,I,O, as one deduces from Figure \ref{squares:fig}-(center). The triangular prism has the status inherited from the bad ridge. 

By looking carefully at Figure \ref{quartets:fig} we deduce that a bad ridge that is the intersection of two facets with colours $t, t+4$ is also incident (along its 6 triangular faces) to facets of all the other 6 colours, whose stati can be as in Figure \ref{P3_states:fig}. For instance, the 6 facets one obtains from the top-left bad ridge incident to the facets coloured with 1 and 5 are shown in Figure \ref{join:fig}. 
Since the ascending and descending links in Figure \ref{P3_states:fig} are collapsible, they remain so after the join with two vertices with status I (or O).

\begin{figure}
 \begin{center}
  \includegraphics[width = 7 cm]{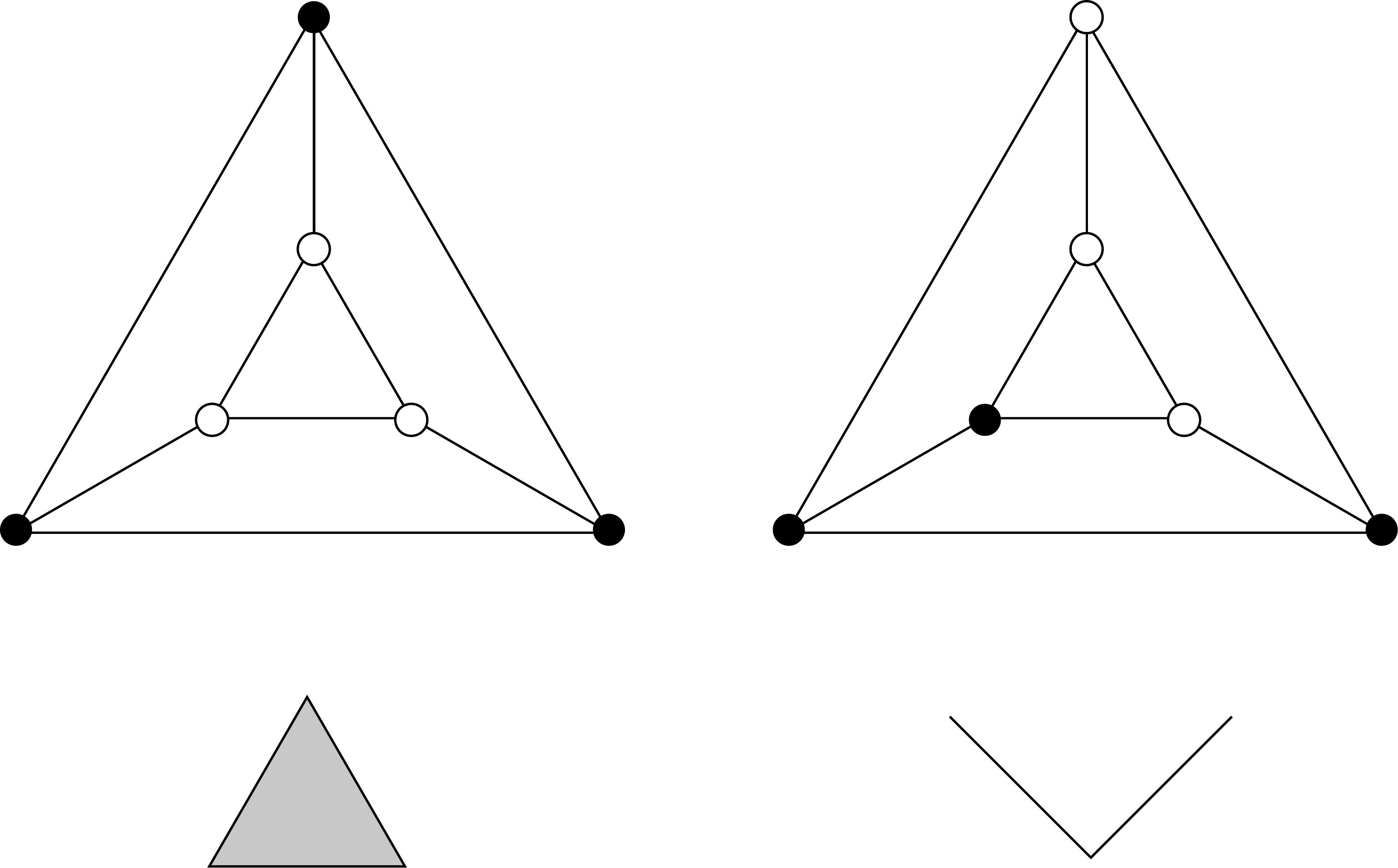}
 \end{center}
 \nota{The possible states of the prisms dual to the bad ridges. In both cases the ascending and descending links are collapsible (a triangle or two edges connected by an endpoint).}
 \label{P3_states:fig}
\end{figure}

\begin{figure}
 \begin{center}
  \includegraphics[width = 5 cm]{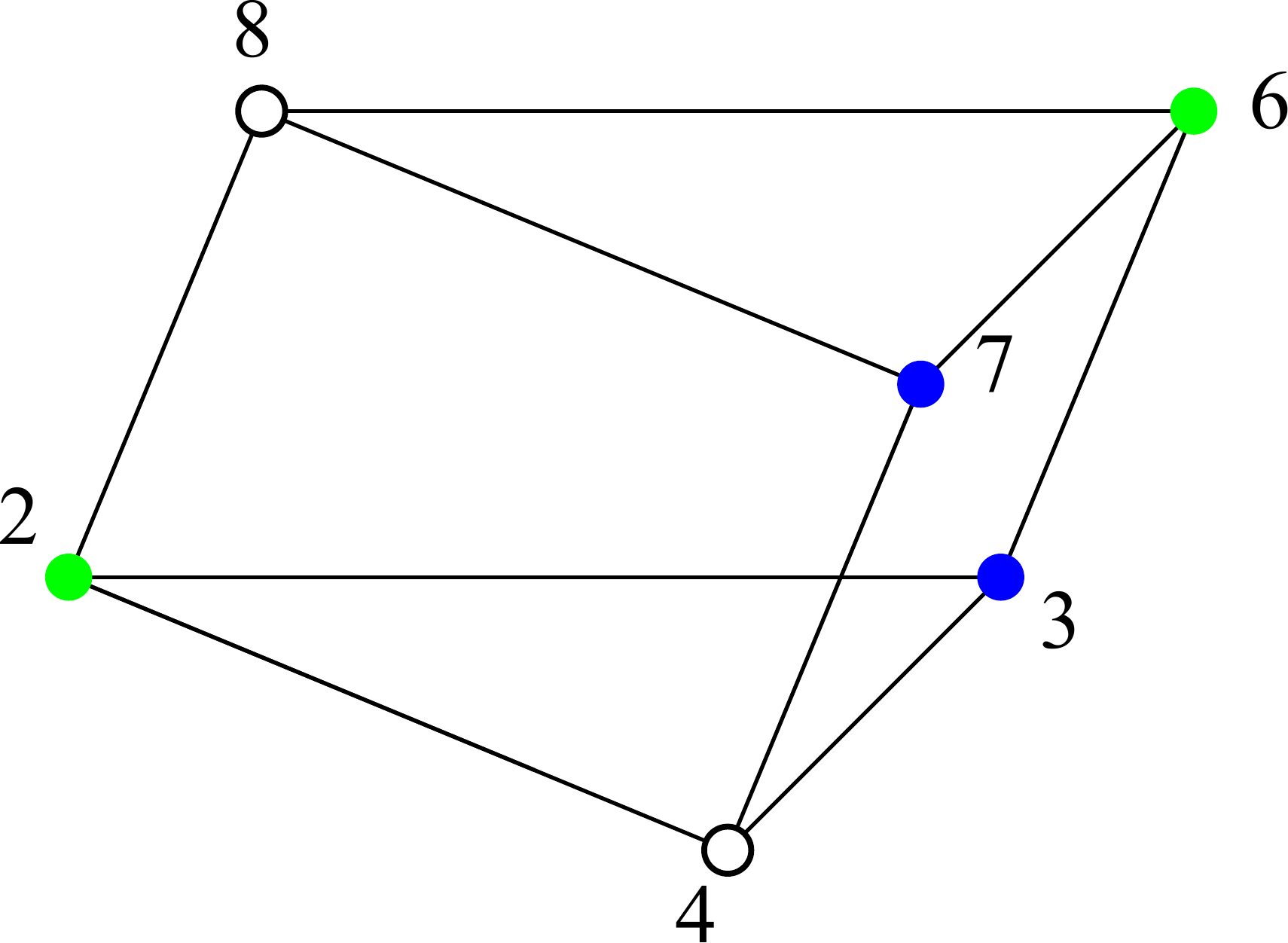}
 \end{center}
 \nota{The 6 other facets incident to the top-left bad ridge adjacent to two facets with colour 1 and 5.}
 \label{join:fig}
\end{figure}

If the bad square lies in $\partial \bar M^5$, the link of $w$ is the cone over the join of the square with vertices I,O,I,O and another square, which is one of the three square faces of the prism shown in Figure \ref{P3_states:fig}. In all the cases shown in the figure the square face contains contractible ascending and descending links and hence we conclude as above.
\end{proof}

The proof of Theorem \ref{main:teo} is now complete.

\subsection{The fibrations on the boundary 4-tori} 
We have constructed a fibration $f\colon \bar M^5 \to S^1$. This of course restricts to a fibration on each of the 40 boundary 4-tori, that we now analyse. 

The decomposition of $\bar M^5$ into 5-dimensional polytopes induce a decomposition of $\partial \bar M^5$ into 4-cubes. Remember that there are 8 large cusps and 32 small cusps. Correspondingly we have 8 large boundary 4-tori in $\partial \bar M^5$ that decompose each as $4 \times 4 \times 4 \times 4$ hypercubes and 32 small boundary 4-tori that decompose each as $2 \times 2 \times 2 \times 2$ hypercubes. 
We identify correspondingly each boundary 4-torus with $\matR^4/4\matZ^4$ and $\matR^4/2\matZ^4$ in the obvious way.

\begin{prop} \label{tori:prop}
Up to permuting some coordinate axis and/or reversing their orientations,
the fibration on each large 4-torus is isotopic to $f(x_1,x_2,x_3,x_4) = x_1$, while that on each small 4-torus is $f(x_1,x_2,x_3,x_4) = x_1+x_2+x_3+x_4$. 
\end{prop}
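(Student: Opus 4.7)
The plan is to compute the cohomology class of $f|_T$ in $H^1(T;\matZ) \cong \matZ^4$ for each boundary $4$-torus $T$; this class determines the fibration up to isotopy and should match that of one of the two model maps in the statement.

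I would fix an ideal vertex $v$ of $P^5$ corresponding to $T$ and analyze the $4$-cube link $C$ at $v$. Its four pairs of opposite facets identify with the four coordinate directions of $T$, and adjacent copies $C_w, C_{w'}$ in the tessellation are mirror images of each other because the colouring glues facets via the identity map. The key combinatorial step is to determine, for each move $S \in \calS$, whether its two facets incident to $v$ form an opposite or an adjacent pair in $C$. Using the quartet description (each move $\{t, t+4\}$ splits into two adjacent pairs of facets differing in positions $t$ and $5$), one verifies the following pattern: at a large cusp $v = \pm e_i$ with $i \leq 4$, exactly the move $\{i, i+4\}$ has its two facets at $v$ on an opposite pair of $C$ (they differ in all four coordinates other than $\pm e_i$), while the other three moves have their two facets on adjacent pairs; at a small cusp $v = \pm e_5$, only one colour from each move is present at $v$, and its two facets form an opposite pair. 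By Proposition \ref{vertices:coll:prop}, in any balanced state these ``aligned'' opposite pairs carry opposite stati.

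Next I would compute the edge orientations of the dual cubulation on $T$ and then the winding numbers of $f$ along the standard generators of $\pi_1(T)$. Each dual edge crosses a shared facet and points from the copy where that facet has status O to the copy where it has status I. Tracing the states $s_0, s_1, s_2, s_3$ along the length-$4$ loop $\alpha_j$ of a large torus and applying the moves in sequence: for the direction aligned with $\{i, i+4\}$, all four dual edges are coherently oriented (giving winding $\pm 4$); for the other three directions, the two colours involved belong to two distinct moves which flip the relevant stati on disjoint pairs of edges, producing two oppositely oriented pairs that cancel (giving winding $0$). An analogous computation on a small torus along the length-$2$ loop $\alpha_i$ shows that its two dual edges point oppositely inside $C$ but are both traversed against the same orientation by $\alpha_i$, yielding winding $\pm 2$ in each direction.

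These numbers match the cohomology classes of the model maps $x_1$ from $\matR^4/4\matZ^4$ to $\matR/\matZ$ and $x_1+x_2+x_3+x_4$ from $\matR^4/2\matZ^4$ to $\matR/\matZ$, up to permutation of coordinates and an overall sign (reversal of the $S^1$ orientation). By the symmetries of $P^5$ acting transitively on the ideal vertices of each type, the conclusion holds on every boundary torus. The main obstacle is the combinatorial bookkeeping in the first step --- tracking which move gives an opposite versus an adjacent pair at each cusp type, and accounting for the mirror-image convention when determining each edge's direction --- after which the winding computation reduces to arithmetic modulo $2$.
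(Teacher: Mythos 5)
Your approach is genuinely different from the paper's, and it hinges on a claim that you state as obvious but that actually requires non-trivial input from $3$- and $4$-manifold topology.

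Your plan is to compute the class $[f|_T]\in H^1(T;\matZ)$ by winding numbers and then assert that ``this class determines the fibration up to isotopy.'' For a smooth fibration $T^4\to S^1$ that assertion is true, but it is far from free: it relies on Hatcher's theorem that $\pi_0\,\mathrm{Diff}(T^3)=GL(3,\matZ)$ and on a careful bundle-theoretic argument (cut along a fiber to get $T^3\times I$, isotope the gluing map to the identity, and then transport that isotopy to an isotopy of the original fibration inside the fixed $T^4$; one also has to handle a potential loop in $\mathrm{Diff}(T^3)$ that shows up when promoting a fiberwise isomorphism to an ambient isotopy). None of this is addressed, so as written there is a gap precisely at the step your whole strategy reduces to. A straight-line homotopy $f_t = p + t\cdot g$ between two homotopic maps does not stay in the space of fibrations, so the reduction from ``same cohomology class'' to ``isotopic'' is genuinely a theorem, not a definition.

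The paper avoids this entirely and gives a self-contained geometric argument. For the small tori the restricted state is balanced with opposite stati on opposite facets, so the constructed PL map \emph{is} already the diagonal map $x_1+\cdots+x_4$ --- no isotopy and no classification result is needed. For the large tori the paper observes that the edges dual to the preferred direction are all oriented coherently, hence every fiber is a PL graph over the orthogonal $T^3$, and graphs over a fixed base are explicitly isotopic to the zero section; this produces the isotopy to $f=x_1$ by hand. That direct argument is both shorter and more elementary than what your route would require once the missing lemma is filled in.

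Your combinatorial bookkeeping (one opposite pair with opposite stati giving winding $\pm 4$ in the preferred direction of a large torus and cancellations elsewhere; winding $\pm 2$ in every direction of a small torus) is consistent with the paper's analysis of the restricted state and set of moves, and matches the cohomology classes of the two model maps. The minor wording ``point oppositely inside $C$'' in the small-torus step should be tightened --- the two dual edges at opposite facets are oriented in the \emph{same} geometric direction along the axis (one entering $C$, the other leaving it on the far side), which is exactly what makes the two crossings add rather than cancel. But the substantive issue is the unproved isotopy-rigidity of fibrations $T^4\to S^1$: either cite it and prove it from Hatcher's theorem, or replace the reduction-to-cohomology step with the paper's direct graph argument.
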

\begin{proof}
We first note that the dual cube complex of the decompositions of the 4-tori into 4-cubes is isomorphic to the original decomposition (the tessellation of Euclidean space into cubes is self-dual).

We first analyse the small boundary 4-tori. Each such is constructed from a 4-cube equipped with its unique 4-colouring. The fibration on it is in turn induced by a state and a set of moves that are the restrictions of those $s,\calS$ of $\bar M^5$. By analysing these restrictions we find that the state is a balanced state (opposite facets have opposite stati), and the set of moves is the set of opposite facets (the same partition of the colouring). With this configuration it is easy to prove that the fibration on the 4-torus is simply the diagonal fibration $f(x_1,x_2,x_3,x_4) = x_1+x_2+x_3+x_4$. The fibration is already smooth by construction, no isotopy is needed.

The situation on the large boundary 4-tori is quite different and more complicated because these are adjacent to the bad ridges. Each large boundary 4-torus is constructed from a 4-cube equipped with its unique 8-colouring. The fibration is induced by a state and a set of moves on the 4-cube that are the restrictions of those $s, \calS$ of $\bar M^5$, and are as follows.

The set of moves consists of 4 pairs of facets like in the previous case, with one key difference: one pair consists of opposite facets, while the other three pairs consist of adjacent facets. The state has opposite stati on the first pair, and coinciding stati on each of the other three pairs. The bad 2-squares that separate the pairs of adjacent facets are those contained in the bad ridges of $P^5$. 

The resulting orientations on the edges of the dual $4\times 4 \times 4 \times 4$ cube complex are as follows. On the coordinate direction that is dual to the facets of the first pair, all the edges are just oriented coherently towards the same direction. We call it the \emph{preferred direction}. The orientations of the other edges are also invariants under integral translations along the preferred direction. Therefore to understand the orientation of all the edges it suffices to study any big 3-cube $4\times 4 \times 4$ that is orthogonal to the preferred direction. 

The orientations on such a big 3-cube are as shown in Figure \ref{cube:fig}. The figure shows the bad squares and how we decomposed them into triangles. The big 3-cube decomposes into $4\times 4 \times 4$ cubes, that are of two kinds: the \emph{good} ones that do not contain any bad square, and the \emph{bad} ones that contain a pair of opposite bad squares, and were hence decomposed into 4 prisms. The positions of the 48 bad squares and the orientations of the edges that are hidden in the figure can be deduced easily from what is shown. 

The decomposition of the 4-dimensional $4\times 4 \times 4 \times 4$ large boundary 4-torus is the one shown in Figure \ref{cube:fig} multiplied by $[0,1]$, then repeated and juxtaposed 4 times. A fiber of the fibration is a complicated 3-dimensional object, union of various 3-dimensional polyhedra attached along their faces by forming varying angles. However, since there is a preferred direction where all the arrows point in the same way, every such polyhedron will be transverse to it. Every fiber of $f$ is the graph of a piecewise-linear function from a 3-torus orthogonal to the direction to the $S^1$ parallel to the direction. Therefore the fibration is isotopic to the projection $f(x_1,x_2,x_3,x_4) = x_1$, if we suppose that the first coordinate represents the preferred direction.
\end{proof}

\begin{figure}
 \begin{center}
  \includegraphics[width = 7 cm]{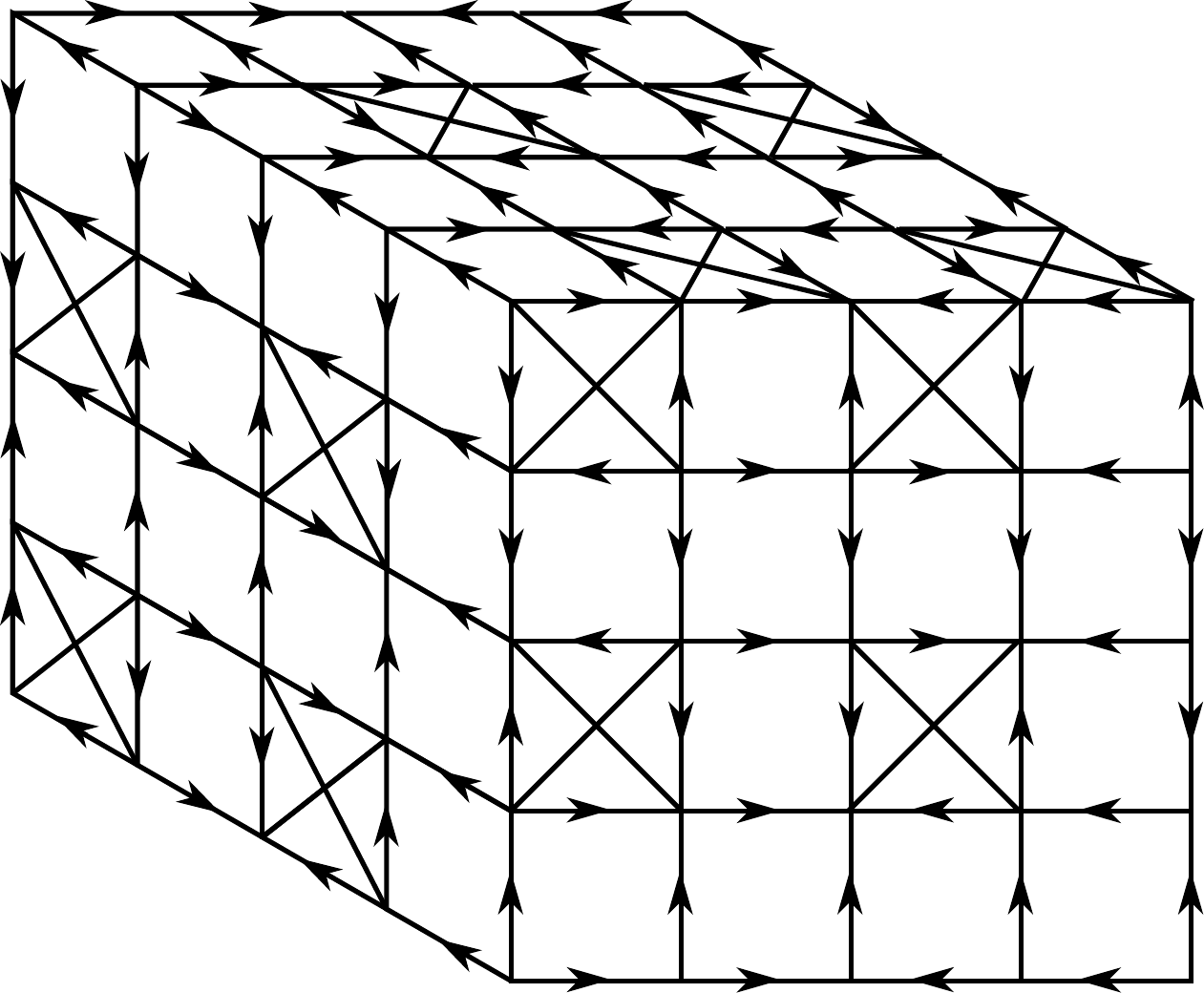}
 \end{center}
 \nota{The orientation on the edges on the $4\times 4 \times 4$ cubes orthogonal to the preferred direction. The bad squares are subdivided into four triangles as shown here.}
 \label{cube:fig}
\end{figure}

\subsection{A totally geodesic fibering 3-dimensional submanifold} \label{3:subsection}
We have constructed a fibration $f\colon M^5 \to S^1$. We now describe some interesting geodesic 3-dimensional submanifolds.

Let $F^k \subset P^5$ be a $k$-face with $1 \leq k \leq 4$. Its counterimage in $M^5$ consists of a (possibly disconnected) totally geodesic $k$-dimensional hyperbolic submanifold that is easily described. The face $F^k$ inherits a colouring from that of $P^5$, where every facet $G^{k-1}$ of $F^k$ takes the colour of the unique facet of $P^5$ that intersects $F^k$ in $G^{k-1}$. With the technique already explained above, the colouring defines a hyperbolic $k$-manifold $M^k$ that orbifold-covers $F^k$. The counterimage $\pi^{-1}(F^k)$ of $F^k$ in $M^5$ consists of $2^c$ copies of $M^k$, where $c$ is the number of colours in the palette $\{1,\ldots,8\}$ that are \emph{not} assigned to any facet of $P^5$ incident to $F^k$.

This yields plenty of interesting geodesic submanifolds in $M^5$. Every two-dimensional face $F^2$ in $P^5$ is a triangle with one real right-angled vertex and two ideal vertices. Depending on whether the triangle inherits 2 or 3 colours from those of $P^5$, its counterimage consists of many geodesic hyperbolic punctured spheres in $M^5$, with 3 or 4 punctures respectively.

Every three-dimensional face $F^3$ in $P^5$ is a bipyramid with three ideal vertices and two real vertices. This is the right-angled polyhedron $P^3$ already considered for instance in \cite{RT4}. Its counterimage consists of a number of cusped hyperbolic 3-manifolds totally geodesically embedded in $M^5$.

We are particularly interested in the case where $F^3$ is a bad ridge. In that case we have already seen during the proof of Theorem \ref{final:teo} that the six faces of $F^3$ inherit six different colours (so $F^3$ is adjacent to 8 facets with 8 distinct colours overall) 
and hence the counterimage of $F^3$ is a connected totally geodesic hyperbolic 3-manifold $M^3\subset M^5$ tessellated into $2^6$ copies of $F^3$. The interesting point here is that all the states of these $2^6$ copies inherited from the states of $P^5_v$ are as shown in Figure \ref{P3_states:fig}. All the ascending and descending links are collapsible, and therefore we deduce the following.

\begin{prop}
The fibration $f\colon M^5 \to S^1$ restricts to a fibration on each totally geodesic $M^3 \subset M^5$ that is the counterimage of a bad ridge of $P^5$. 
\end{prop}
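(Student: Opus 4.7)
The plan is to apply the Bestvina--Brady criterion directly to $\bar M^3$, mirroring the argument used for Theorem \ref{final:teo} in dimension five. First I would observe that $\bar M^3$ inherits an affine cell decomposition from the subdivided cube complex of $\bar M^5$. Concretely, because $M^3$ is the counterimage of a face $F^3 = P^3$ of $P^5$, the tessellation of $\bar M^3$ into copies of the truncated bipyramid $\bar P^3$ is a geodesic substructure of the tessellation of $\bar M^5$ into copies of $\bar P^5$; applying intrinsically to $P^3$ the dual-cubulation plus prismatic-subdivision construction of Section \ref{subdivided:subsection} produces an affine cell structure on $\bar M^3$ on which $f$ is cell-wise affine. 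In particular $f|_{\bar M^3}$ is a piecewise-linear circle-valued Morse function in the sense of \cite{BB}.

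Next I would check that the ascending and descending links are collapsible at each vertex $w$ of this cell decomposition of $\bar M^3$, distinguishing the same three types as in Theorem \ref{final:teo}. For a type (1) vertex, i.e.\ the barycenter of some $\bar P^3_v$, the link is combinatorially the triangular prism dual to the bipyramid $P^3$ (with some edges possibly subdivided at midpoints of bad edges of $P^3$), and the induced state on it is one of those displayed in Figure \ref{P3_states:fig}; both of these have collapsible ascending and descending links by inspection. For a type (2) vertex, lying on $\partial \bar M^3$ at the barycenter of a truncated ideal vertex, the link is a cone over the triangular vertex link, and the same expansion argument as in case (2) of Theorem \ref{final:teo} reduces collapsibility to the already-established collapsibility of the ascending/descending links at ideal vertices. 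For a type (3) vertex, the barycenter of a bad square of $\bar M^3$ (should any arise from bad edges of $P^3$), the link is a join of the $I,O,I,O$ square with a simpler piece, so the collapsibility argument of case (3) of Theorem \ref{final:teo} applies verbatim.

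Since $\dim \bar M^3 = 3 \leq 5$, the theorem of the excerpt then gives that $f|_{\bar M^3}$ is homotopic to a smooth fibration $\bar M^3 \to S^1$, and the PL-to-smooth homotopy can be performed consistently with the one carried out on $\bar M^5$ so that the fibration on $M^5$ genuinely restricts to one on $M^3$.

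The principal obstacle I anticipate is the bookkeeping in step (1): one must track how the chosen balanced state $s$ and the moves $\calS$ restrict to the colouring that $P^3$ inherits as a face of $P^5$, and verify that the resulting state on the dual prism indeed falls into one of the two configurations of Figure \ref{P3_states:fig}. Once that is pinned down (using the fact, already noted in the discussion preceding the proposition, that $F^3$ is adjacent to eight facets with eight distinct colours and that the six side faces of $P^3$ inherit the six colours complementary to the quartet $\{t,t+4\}$ of the two facets defining the bad ridge), the remainder is a straightforward transcription of the dimension-five argument, with Figures \ref{P3_states:fig} and \ref{join:fig} doing the combinatorial work.
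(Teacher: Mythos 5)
Your proposal follows the same route as the paper: one observes that $M^3$ is tessellated by $2^6$ copies of $P^3$, applies the intrinsic dual-cubulation construction, and notes that the induced states are precisely those of Figure~\ref{P3_states:fig}, whose ascending and descending links are collapsible, so the Bestvina--Brady criterion gives the fibration. One small slip in your case (2): the ideal vertices of the bipyramid $P^3$ have \emph{quadrilateral} links (each lies on four triangular faces), not triangular ones, so the cone there is over a square/2-octahedron; the collapsibility still follows because, as in Proposition~\ref{vertices:coll:prop}, the two faces in a common move are non-adjacent and hence dual to a pair of opposite vertices with opposite stati.
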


There are eight such disjoint geodesically embedded $M^3$, one above each bad ridge of $P^5$. By analysing $M^3$ we discover that it has 12 toric cusps, all contained in the large cusps of $M^5$.

\subsection{Determination of the fiber} \label{determination:fiber:subsection}
The fiber of the fibration $f\colon \bar M^5 \to S^1$ is a compact 4-manifold $\bar F^4$ with boundary, and it is of course of primary interest to understand the topology of $\bar F^4$ and the monodromy of the fibration. We denote by $F^4$ the interior of $\bar F^4$, that is the fiber of the fibration $f\colon M^5 \to S^1$. 

A partially ideal triangulation of $F^4$ can be built quite easily as follows. In each $P^5_v$, let $\Sigma^3_v \subset \partial P^5_v$ be the union of all the 36 ridges that separate two facets with opposite status (these ridges are never bad). Each ridge is a double pyramid with three ideal vertices and two finite vertices. The piecewise-linear object $\Sigma^3_v$ is a punctured 3-sphere inside the punctured 4-sphere $\partial P^5_v$, with the punctures being at the ideal vertices. The punctured 3-sphere $\Sigma^3_v$ separates the two punctured 4-discs in $\partial P^5_v$ that consist of all the facets with status O or I, respectively.

Let $C(\Sigma^3_v)\subset P^5_v$ be the cone over $\Sigma^3_v$ centered at the barycenter of $P^5_v$. This is a 4-disc with some points removed from the boundary (again, the ideal vertices). One can verify that, up to an isotopy, we have
\begin{equation} \label{F4:eqn}
F^4 = \bigcup_{v\in\matZ_2^8} C(\Sigma^3_v).
\end{equation}

We have thus described the fiber as a concrete piecewise-linear object $F^4$ contained in $M^5$. Of course $F^4$ is very far from being geodesic: it decomposes into some 4-dimensional polytopes (the cones over the ridges) that form various angles at their 3-dimensional common facets.

Each ridge is a double pyramid that can be triangulated canonically into two tetrahedra; each such tetrahedron has 3 ideal vertices and one real vertex. Accordingly each $C(\Sigma^3_v)$ can be triangulated into 72 distinct 4-simplexes, where each 4-simplex has 3 ideal vertices and 2 real ones. Summing up, the manifold $F^4$ is triangulated into $72 \times 2^8=18432$ distinct 4-simplexes with both ideal and finite vertices. This triangulation is too large to be studied, and this motivated us to look for some smaller examples. We do this in the next section.

\begin{rem} \label{unwrap:rem}
As is typical with these constructions, the fiber $F^4$ has two connected components, obtained by taking in (\ref{F4:eqn}) the vectors $v\in \matZ_2^8$ with even or odd sum $v_1+\cdots + v_8$. The fibration $f\colon M^5 \to S^1$ that we have constructed has disconnected fibers, and the image of $f_*\colon\pi_1(M^5) \to \pi_1(S^1) = \matZ$ is $2\matZ$. We will henceforth substitute $f$ with its lift along the degree-2 covering $S^1 \to S^1$ to get a fibration with connected fibers. We still name the resulting fibration as $f\colon M^5 \to S^1$ for simplicity. 
\end{rem}

\section{A smaller example} \label{quotient:section}
In the previous section we have constructed a fibration $f\colon M^5 \to S^1$.
We now build a smaller example $N^5$ that is commensurable with $M^5$. The smaller example $N^5$ turns out to be the smallest hyperbolic 5-manifold known, constructed by Ratcliffe and Tschantz in \cite{RT5}.
For such an object we can describe the fiber explicitly and study it using Regina \cite{regina}.

\subsection{The abelian cover} 
Let $\tilde M^5 \to M^5$ be the abelian cover obtained by unwrapping completely the fibration $f \colon M^5 \to S^1$. Formally, this is the cover determined by the kernel of $f_*\colon  \pi_1(M^5) \to \pi_1(S^1) = \matZ$. 
The fibration $f\colon M^5 \to S^1$ lifts to a fibration $\tilde f \colon \tilde M^5 \to \matR$ with the same fiber $F^4$ as the original $f$. The hyperbolic manifold $\tilde M^5$ has infinite volume, it is geometrically infinite and diffeomorphic to $F^4 \times \matR$.

The tessellation of $M^5$ into polytopes $P_v^5$ with $v\in \matZ_2^8$ lifts naturally to a tessellation of $\tilde M^5$ that can be described explicitly. The manifold $\tilde M^5$ decomposes into infinitely many copies of $P^5$ indexed as
$$P_{v,l}^5$$
where $v \in \matZ_2^8$ and $l \in \matZ$ vary with the condition that
$v_1+ \cdots + v_8 + l $ should be an even integer. (This condition is connected to Remark \ref{unwrap:rem}: without it, we would get two identical copies of $\tilde M^5$). The polytope $P_{v,l}^5$ inherits the colouring of $P^5$ and the state of $P_v^5$.
The facet $F$ of $P_{v,l}^5$ is identified with the identity map to the facet $F$ of $P_{v+e_i,l \pm 1}^5$, where $i$ is the colour of $F$ and the sign $+1$ or $-1$ depends on whether the status of $F$ in $P_{v,l}^5$ is O or I.

The covering $\tilde M^5 \to M^5$ is simply the forgetful map $P_{v,l}^5 \to P_v^5$. The monodromy of the covering is the deck transformation 
$$\tau \colon \tilde M^5 \to \tilde M^5$$ 
that sends $P_{v,l}^5$ to $P_{v,l+2}^5$ via the identity map. Of course we have
$$M^5 = \tilde M^5 /{\langle \tau \rangle}.$$

\subsection{The Ratcliffe--Tschantz hyperbolic 5-manifold $N^5$}
We now quotient $\tilde M^5$ to obtain a much smaller hyperbolic manifold $N^5$, that we find \emph{a posteriori} to be isometric to the smallest hyperbolic 5-manifold known, discovered by Ratcliffe and Tschantz in \cite{RT5}.

Recall from Proposition \ref{16:prop} that $R_{16}$ is a group of isometries of $P^5$ that preserve the colouring of $P^5$ and acts freely and transitively on the balanced states of $P^5$. This implies that for every pair of polytopes $P^5_{v,l}$ and $P^5_{v',l'}$ of the tessellation of $\tilde M^5$ there is a unique state-preserving isometry $\varphi\in R_{16}$ that sends the first to the second (recall that these polytopes inherit the states of $P^5_v$ and $P^5_{v'}$). 
This is a very convenient property. We now show that this isometry extends to an isometry of $\tilde M^5$, and that all the isometries of $\tilde M^5$ constructed in this fashion form a group.

Every isometry $\varphi\in R_{16}$ acts on the set $\{1, \ldots, 8\}$ of colourings, and hence linearly on $\matZ_2^8$ by sending $e_i$ to $e_{\varphi(i)}$. 

\begin{lemma} \label{Phi:lemma}
Let $\varphi \in R_{16}$ be a state-preserving isometry
$\varphi \colon P^5_{v,l} \to P^5_{v',l'}$. It extends to a unique isometry $\Phi\colon \tilde M^5 \to \tilde M^5$, which sends $P^5_{v+w,l+j}$ to $P^5_{v'+\varphi(w),l'+j}$ via the states-preserving isometry $\varphi$, for every $w \in \matZ_2^8$ and $j\in \matZ$ with $w_1+\cdots+w_8+j$ even. We have $\varphi \in Q_8$ if and only if $l-l'$ is even. 

All the isometries $\Phi$ of this kind form a group $\Gamma$ that acts freely and transitively on the polyhedra $P^5_{v,l}$ of the tessellation.
\end{lemma}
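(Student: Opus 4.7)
The plan is to define $\Phi$ polytope-by-polytope via the explicit formula in the statement, verify that it respects the face identifications of $\tilde M^5$, and then deduce uniqueness, the parity characterisation, and the group/action properties in turn. The bulk of the work is the first step. To show that the local pieces glue into a global isometry, I need to check that whenever two polytopes $P^5_{u,k}$ and $P^5_{u+e_i,k\pm 1}$ are glued along their common colour-$i$ facet $F$, the images $\Phi(P^5_{u,k})$ and $\Phi(P^5_{u+e_i,k\pm 1})$ are glued along $\varphi(F)$ with the same sign convention. Since this sign is determined by the status of $F$ in the state $s_u$, the check reduces to the key identity
\[
\varphi(s_{v+w}) = s_{v'+\varphi(w)} \qquad \text{for every } w \in \matZ_2^8.
\]
The hypothesis $\varphi(s_v) = s_{v'}$ gives this for $w=0$; the general case follows from the fact that $\varphi$ commutes with the moves $\calS$ in the natural sense. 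Indeed, since $\varphi$ preserves $\calS$ as a partition and acts on $\matZ_2^8$ by permuting coordinates, a short check shows $\varphi\bigl(S(t)\bigr) = \varphi(S)\bigl(\varphi(t)\bigr)$ for every $S \in \calS$ and every state $t$, and moreover $\varphi(T(w)) = T(\varphi(w))$, where $T(w) = \{S\in\calS : \sum_{i\in S}w_i\equiv 1\}$. Combining these with the relation $s_{v+w} = T(w)(s_v)$ yields the identity.

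Uniqueness of $\Phi$ is then immediate, since a hyperbolic isometry is determined by its restriction to any open set. For the parity claim, one first observes that the parity of $l-l'$ depends only on $\varphi$ (not on the choice of base polytope): this uses that the kernel of the state-assignment $v\mapsto s_v$ consists of vectors of even Hamming weight. We may then take $v=0$, and the tessellation parity condition yields $l-l'\equiv\sum_i v'_i \pmod 2$. Writing $\varphi(s) = T_\varphi(s)$ for the unique $T_\varphi\subseteq\calS$ (which exists because $\calS$ acts simply transitively on the balanced states) and summing the defining congruences of $T_\varphi$ across the partition $\calS$ of $\{1,\dots,8\}$ yields $\sum_i v'_i \equiv |T_\varphi|\pmod 2$. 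Finally, a short inspection shows that the number of bars in the permutation string of $\varphi$ is exactly $2|T_\varphi|$ (each move in $T_\varphi$ reverses the stati of the two colours it contains, contributing two bars), so the earlier characterisation of $Q_8$ as the subgroup with a number of bars divisible by $4$ translates precisely to $|T_\varphi|$ even, hence to $l-l'$ even.

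For the group structure and action, composing two extensions $\Phi_1, \Phi_2$ with underlying isometries $\varphi_1, \varphi_2 \in R_{16}$ yields, directly from the explicit polytope-by-polytope formula, another extension with underlying isometry $\varphi_2\circ\varphi_1 \in R_{16}$; inverses are analogous. Transitivity of the $\Gamma$-action on the polytopes of the tessellation is then immediate from Proposition \ref{16:prop}, which furnishes a unique state-preserving $\varphi\in R_{16}$ between any two balanced states, and freeness follows from uniqueness of $\Phi$. The main technical obstacle is the gluing identity $\varphi(s_{v+w}) = s_{v'+\varphi(w)}$ and the careful bookkeeping around the ``bars vs.\ moves'' correspondence in the parity claim; once these are in hand, the remaining statements are essentially formal.
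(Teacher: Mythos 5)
Your proof is correct and follows the paper's approach of defining $\Phi$ polytope-by-polytope and verifying well-definedness at the shared facets. You make explicit the state-compatibility identity $\varphi(s_{v+w}) = s_{v'+\varphi(w)}$, which the paper's proof uses only implicitly when asserting that $\varphi(F)$ lies in both image polytopes, and you spell out the bar-count/move-count computation behind the $Q_8$ parity claim that the paper merely invokes by reference to the discussion after Proposition~\ref{16:prop}; both fillings are accurate.
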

\begin{proof} 
We show that $\Phi$ is well-defined. By definition $\Phi$ sends the polyhedron $P^5_{v+w,l+j}$ to $P^5_{v'+\varphi(w),l'+j}$ via $\varphi$, for every $w \in \matZ_2^8$ and $j\in \matZ$ with $w_1+\cdots+w_8+j$ even. It is immediate to check that $\Phi$ permutes the polyhedra of the tessellation of $\tilde M$. It is an isometry when restricted to each of them, and it remains to prove that it is well-defined on their intersections. 

Consider a facet $F$ that separates two polyhedra $P^5_{v+w,l+j}$ and $P^5_{v+w+e_i,l+j+1}$. The colour of $F$ is $i$. The map $\Psi$ sends the two polyhedra adjacent to $F$ respectively to $P^5_{v'+\varphi(w),l'+j}$ and $P^5_{v'+\varphi(w+e_i),l'+j+1} = P^5_{v'+\varphi(w) + e_{\varphi(i)}, l'+j+1}$, and it sends $F$ to the facet $\varphi(F)$ of colour $\varphi(i)$ contained in both of them. Therefore $\Phi$ is well-defined.

The fact that $\varphi \in Q_8$ if and only if $l-l'$ is even follows from the discussion after Proposition \ref{16:prop}, where we say that $\varphi \in Q_8$ if and only if the number of ``bars'' is divisible by four.
\end{proof}

We have just defined a group $\Gamma$ of isometries of $\tilde M$ that acts freely and transitively on the polytopes $P^5_{v,l}$ of the tessellation. The action on $\tilde M$ is not free, because the states-preserving map $$\id \colon P^5_{0,0} \longrightarrow P^5_{e_1+e_5,0}$$
extends to an isometry $\Phi$ of $\tilde M^5$ which fixes the non-empty intersection
$P^5_{0,0} \cap P^5_{e_1+e_5,0}$, that consists of two bad ridges separating the facets with colour 1 and 5. The quotient $\tilde M^5/\Gamma$ is an interesting orbifold (not a manifold) tessellated into a single copy of $P^5$.

We now describe a torsion-free index two subgroup $\Gamma_0 \triangleleft \Gamma$. We consider the following isometries in $\Gamma$:
\begin{enumerate}
\item those that send $P_{0,0}^5$ to $P_{v,0}^5$ with $v_1+ \cdots + v_4$ even;
\item the isometry that sends $P_{0,0}^5$ to $P_{e_1,1}^5$.
\end{enumerate}

Let $\Gamma_0<\Gamma$ be the subgroup generated by these (finitely many) isometries. 

\begin{teo}
The group $\Gamma_0$ has index two in $\Gamma$ and acts freely on $\tilde M^5$. The quotient $N^5 = \tilde M^5/\Gamma_0$ is a cusped hyperbolic 5-manifold tessellated in two copies of $P^5$, isometric to the manifold discovered by Ratcliffe and Tschantz in \cite{RT5}.
\end{teo}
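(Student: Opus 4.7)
My plan addresses the theorem's three claims—index two, freeness, and isometry to the Ratcliffe--Tschantz manifold---separately.

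For index two, I would define a homomorphism $\chi \colon \Gamma \to \matZ_2$ with kernel $\Gamma_0$. Using the parametrization of $\Phi \in \Gamma$ by triples $(v,l,\varphi)$ with composition rule $(v_2+\varphi_2(v_1), l_1+l_2, \varphi_2\varphi_1)$ given by Lemma~\ref{Phi:lemma}, a natural candidate is $\chi(\Phi) = v_1+v_2+v_3+v_4+\binom{l+1}{2} \pmod 2$. The homomorphism property follows from a short case analysis on whether $\varphi_2 \in Q_8$ (equivalently $l_2$ is even): in this case $\varphi_2$ preserves $\{1,2,3,4\}$ setwise and the sum $v_1+v_2+v_3+v_4$ is additive, while if $\varphi_2 \notin Q_8$ this set is swapped with $\{5,6,7,8\}$ and the constraint $\sum_i v_i+l \equiv 0 \pmod 2$ introduces an $l_1$-term, which is absorbed by the quadratic correction $\binom{l+1}{2}$ via a short binomial identity. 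One then checks $\chi = 0$ on both generator families of $\Gamma_0$---type~(1) by definition, type~(2) because $(e_1)_1+\binom{2}{2} = 1+1 \equiv 0$---while the involution $P^5_{0,0} \leftrightarrow P^5_{e_1+e_5, 0}$ of the excerpt has $\chi = 1$, giving $[\Gamma:\Gamma_0] \geq 2$. For equality, since $\Gamma$ acts freely and transitively on polytopes the number of $\Gamma_0$-orbits on polytopes equals the index, and I would show by induction on $|l|$ that generator~(1) combined with powers of generator~(2) reaches every polytope in $\chi^{-1}(0)$.

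For freeness, the key input is that $\Gamma$ already acts freely on the set of polytopes (Lemma~\ref{Phi:lemma}), so any element of $\Gamma$ with a fixed point in $\tilde M^5$ must permute two distinct polytopes nontrivially while fixing their shared face. The prototype is the bad-ridge swap of the excerpt, sending $P^5_{0,0}$ to $P^5_{e_i+e_{i+4}, 0}$ for some $i \in \{1,2,3,4\}$; here $v_1+v_2+v_3+v_4 = 1$ and $l=0$, so $\chi = 1$ and the element avoids $\Gamma_0$. I would extend this check codimension by codimension, enumerating the local symmetry group fixing each type of face of the tessellation (ridges, $2$-faces, edges, real and ideal vertices), reading off the induced translation $v$ and local rotation $\varphi$, and computing $\chi$ to see it is always nonzero. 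This will guarantee that $\Gamma_0 = \ker\chi$ contains no torsion with fixed points and hence acts freely.

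Finally, the quotient $N^5 = \tilde M^5/\Gamma_0$ inherits a tessellation into $[\Gamma:\Gamma_0]=2$ copies of $P^5$, and its facet identifications can be written out explicitly from the group action. I would compute them and compare to the combinatorial description of the Ratcliffe--Tschantz manifold in \cite{RT5}. I expect this last step to be the main obstacle: both manifolds are encoded by specifying how $2 \times 16$ facets are identified in pairs, and matching the two gluing patterns (up to an isometry of the pair of $P^5$'s) most likely requires a computer-assisted verification, for instance through Regina, analogously to what the next section uses to study the fiber.
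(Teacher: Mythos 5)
Your proposal is essentially correct, but it takes a genuinely different route for the first two claims, and the comparison is instructive.

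\emph{Index two.} The paper works directly with orbits: it defines the ``parity'' of $w\in\matZ_2^8$ to be $w_1+w_2+w_3+w_4\bmod 2$ and traces how the two generating families of $\Gamma_0$ transform the labels $(w,l)$, concluding that $P_{v,l}$ and $P_{w,l}$ are $\Gamma_0$-equivalent exactly when $v$ and $w$ have the same parity, so there are two orbits. You instead package this into a homomorphism $\chi\colon\Gamma\to\matZ_2$, $\chi(\Phi)=v_1+v_2+v_3+v_4+\binom{l+1}{2}$, and I checked that this is indeed a homomorphism: with the composition rule $(v_2+\varphi_2(v_1),\,l_1+l_2,\,\varphi_2\varphi_1)$ one gets $\chi(\Phi_2\Phi_1)=\chi(\Phi_1)+\chi(\Phi_2)+\varepsilon$ where $\varepsilon=l_1l_2$ if $\varphi_2\in Q_8$ (so $l_2$ even) and $\varepsilon=l_1(1+l_2)$ if $\varphi_2\notin Q_8$ (so $l_2$ odd), and both vanish $\bmod\ 2$. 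Your quadratic correction $\binom{l+1}{2}$ is exactly what absorbs the level-dependent parity flip that the paper observes for the type-(2) generator. The two proofs carry the same combinatorial content, but the homomorphism formulation is arguably cleaner: it makes normality and index-two manifest, at the small cost of still needing the explicit orbit computation to see that $\Gamma_0$ is all of $\ker\chi$ rather than a proper subgroup.

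\emph{Freeness.} Here the paper is substantially shorter. It splits on $(l,l',\varphi)$: if $l\neq l'$ then $\Phi$ has infinite order and hence no fixed point; if $l=l'$ and $\varphi\neq\mathrm{id}$ then $\varphi\in Q_8$ fixes only the central vertical axis of $P^5$, which is disjoint from $\partial P^5$, so $\Phi$ cannot fix any point shared by two distinct polytopes; if $l=l'$ and $\varphi=\mathrm{id}$, the state-equality plus the parity constraint forces $v$ and $v'$ to differ in at least two bad-ridge pairs of coordinates, so the two polytopes are disjoint. Your plan — enumerate the $\Gamma$-stabilizer of each cell type and compute $\chi$ on its nontrivial elements — is sound in principle (an element of $\Gamma$ with a fixed point must stabilize some compact cell and so fix its barycenter, and there are finitely many cells up to $\Gamma$), but it is considerably more work. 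You should expect the enumeration to be mostly vacuous: the only elements of $\Gamma$ with fixed points turn out to be the bad-ridge swaps $v\mapsto v+e_i+e_{i+4}$ with $\varphi=\mathrm{id}$, $l=l'$, and the paper's case analysis rules out everything else in a few lines without touching the cell structure.

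\emph{Identification with the Ratcliffe--Tschantz manifold.} Here the two proofs coincide: the paper also resorts to a computer-assisted combinatorial comparison of triangulations, so your expectation of this being the main obstacle requiring computation is accurate.

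Overall the plan would, if executed carefully, yield a correct proof; the main inefficiency is the freeness step, where the paper's geometric case split is significantly more economical than a codimension-by-codimension stabilizer audit.
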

\begin{proof}
Let the \emph{parity} of a vector $w\in \matZ_2^8$ be the parity of $w_1+w_2+w_3+w_4$.
By using the explicit description of $\Phi$ of Lemma \ref{Phi:lemma} we can check that  the isometries of type (1) send $P^5_{w,l}$ to some $P^5_{w',l}$ such that $w$ and $w'$ have the same parity, while the isometry (2) sends $P^5_{w,l}$ to some $P^5_{w',l+1}$ such that $w$ and $w'$ have the same parity $\Leftrightarrow$ $l$ is odd. To prove this, we use that for (1) $\varphi \in Q_8$ acts on the colourings by preserving the two subsets $\{1,\ldots, 4\}$ and $\{5,\ldots, 8\}$, while for (2) $\varphi \not \in Q_8$ exchanges these two subsets. So for (1) we have $w' = v + \varphi(w)$ and hence
$$w_1+ \cdots +w_4 + v_1+\cdots +v_4+\varphi(w)_1+\cdots + \varphi(w)_4
= 2w_1 + \cdots + 2w_4 + v_1 + \cdots + v_4$$
is even, while for (2) we have $w' = e_1+\varphi(w)$ and hence
$$w_1+ \cdots +w_4 + 1+\varphi(w)_1+\cdots + \varphi(w)_4
= w_1 + \cdots + w_8 + 1 $$
is even $\Leftrightarrow$ $l$ is odd. From these facts we deduce easily that $P_{v,l}$ and $P_{w,l}$ lie in the same orbit with respect to $\Gamma_0$ if and only if $v$ and $w$ have the same parity. This shows that $\Gamma_0< \Gamma$ has index two and that $P_{0,0}^5$ and $P_{e_1+e_5,0}^5$ are representatives of the two orbits. Hence the quotient $N^5$ is tessellated into two copies of $P^5$.

We prove that the action is free. Let $\Phi \in \Gamma_0$ send $P^5_{v,l}$ to $P^5_{v',l'}$ via some isometry $\varphi$. If $l'\neq l$, then $\Phi$ has infinite order and therefore it has no fixed points (the action of $\Gamma$ is properly discontinuous). If $l'=l$ and $\varphi \neq \id$, then $\varphi \in Q_8$ is an isometry of $P^5$ that fixes the vertical axis $\{0\}\times \matR \subset \matH \times \matR$, which does not intersect any facet of $P^5$; this implies that $\Phi$ does not have any fixed point in $\tilde M^5$. 

If $l'=l$, $\varphi = \id$, and $v\neq v'$, then $\varphi = \id$ implies that $P_{v,l}$ and $P_{v',l}$ have the same status, which holds precisely if $v_i + v_{i+4} + v_i' + v_{i+4}'$ is even for all $i\in \{1,\ldots, 4\}$. Moreover, by hypothesis also $v_1+\cdots + v_4+v_1'+\cdots + v_4'$ is even: from these conditions we deduce that there are at least two distinct numbers $i,j \in \{1,\ldots, 4\}$ such that $v$ and $v'$ differ in the coordinates $i,j,i+4,j+4$. This implies that $P^5_{v,l} \cap P^5_{v',l} = \emptyset$ (because distinct bad ridges of $P^5$ are disjoint) and hence $\Phi$ has no fixed points.

The fact that the resulting manifold $N^5$ is isometric to the one found by Ratcliffe and Tschantz was then found via computer, by constructing triangulations of both manifolds and then proving that they are combinatorially isomorphic. The code is available in \cite{code}.
\end{proof}

\begin{table}
\begin{center}
Polytope A
\begin{tabular}{ccccc}
Facet        &           Status & To Polytope & To Facet   &             Permutation \\
$(-1, -1, -1, -1,+1)$  &   In  &    B       &    $(-1,+1,+1,+1, -1)$  &     13245 \\       
$(-1, -1, -1,+1, -1)$   &  In   &   A     &      $(+1,+1, -1, -1,+1)$   &    42315  \\    
$(-1, -1,+1, -1, -1)$   &  In    &  A   &        $(+1, -1,+1, -1,+1)$    &   24135  \\  
$(-1, -1,+1,+1,+1)$   &    In   &   B   &        $(+1,+1,+1, -1, -1)$    &   42315 \\      
$(-1,+1, -1, -1, -1)$   & In   &   A   &        $(+1, -1, -1,+1,+1)$  &     31425  \\     
$(-1,+1, -1,+1,+1)$   & In   &   B   &        $(+1,+1, -1,+1, -1)$  &     31425   \\    
$(-1,+1,+1, -1,+1)$   &  In    &  B     &      $(+1, -1,+1,+1, -1)$   &    24135   \\    
$(-1,+1,+1,+1, -1)$    &  Out &    B   &        $(-1, -1, -1, -1,+1)$  &   13245   \\    
$(+1, -1, -1, -1, -1)$   & In   &   A   &        $(+1,+1,+1,+1,+1)$   &      13245   \\    
$(+1, -1, -1,+1,+1)$    & Out  &   A   &        $(-1,+1, -1, -1, -1)$  &   24135  \\     
$(+1, -1,+1, -1,+1)$   &   Out &    A    &       $(-1, -1,+1, -1, -1)$  &   31425  \\     
$(+1, -1,+1,+1, -1)$    &  Out  &   B  &         $(-1,+1,+1, -1,+1)$  &     31425  \\     
$(+1,+1, -1, -1,+1)$    &  Out  &   A  &         $(-1, -1, -1,+1, -1)$ &    42315  \\     
$(+1,+1, -1,+1, -1)$   &  Out  &   B   &        $(-1,+1, -1,+1,+1)$   &    24135   \\    
$(+1,+1,+1, -1, -1)$  &    Out  &   B  &         $(-1, -1,+1,+1,+1)$  &     42315   \\
$(+1,+1,+1,+1,+1)$   &     Out  &   A    &       $(+1, -1, -1, -1, -1)$ &    13245       
\end{tabular}    
\vspace{.3 cm}

Polytope B  
\begin{tabular}{ccccc}    
Facet         &           Status &  To Polytope &  To Facet      &           Permutation \\
$(-1, -1, -1, -1,+1)$  &    In  &     A    &        $(-1,+1,+1,+1, -1)$  &      13245    \\   
$(-1, -1, -1,+1, -1)$ &     In   &    B    &        $(+1,+1, -1, -1,+1)$  &      42315     \\   
$(-1, -1,+1, -1, -1)$ &     In   &    B    &        $(+1, -1,+1, -1,+1)$   &     24135     \\   
$(-1, -1,+1,+1,+1)$ &       In  &     A    &        $(+1,+1,+1, -1, -1)$   &     42315     \\   
$(-1,+1, -1, -1, -1)$  &    In  &     B    &        $(+1, -1, -1,+1,+1)$  &      31425    \\    
$(-1,+1, -1,+1,+1)$  &      In  &     A    &        $(+1,+1, -1,+1, -1)$  &      31425    \\    
$(-1,+1,+1, -1,+1)$  &      In   &    A    &        $(+1, -1,+1,+1, -1)$   &     24135    \\    
$(-1,+1,+1,+1, -1)$  &      Out  &    A    &        $(-1, -1, -1, -1,+1)$ &     13245  \\      
$(+1, -1, -1, -1, -1)$  &    In   &    B    &        $(+1,+1,+1,+1,+1)$    &      13245    \\    
$(+1, -1, -1,+1,+1)$  &      Out  &    B   &         $(-1,+1, -1, -1, -1)$  &    24135  \\      
$(+1, -1,+1, -1,+1)$  &      Out  &    B   &         $(-1, -1,+1, -1, -1)$ &     31425   \\     
$(+1, -1,+1,+1, -1)$  &      Out  &    A   &         $(-1,+1,+1, -1,+1)$   &     31425    \\    
$(+1,+1, -1, -1,+1)$ &       Out  &    B  &          $(-1, -1, -1,+1, -1)$ &     42315   \\     
$(+1,+1, -1,+1, -1)$  &      Out &     A  &          $(-1,+1, -1,+1,+1)$   &     24135    \\    
$(+1,+1,+1, -1, -1)$  &      Out &     A  &          $(-1, -1,+1,+1,+1)$   &     42315    \\    
$(+1,+1,+1,+1,+1)$   &       Out &     B   &         $(+1, -1, -1, -1, -1)$  &    13245       
\end{tabular}
\vspace{.5 cm}
\nota{The Ratcliffe -- Tschantz manifold $N^5$ is obtained by pairing the facets of two copies of $P^5$ as indicated here. The states are also shown.}
\label{N5:table}
\end{center}
\end{table}

In Table \ref{N5:table} we show how to obtain $N^5$ as an isometric facet-pairing of two copies of $P^5$. The second column in the table shows the status I/O of each facet, and the last column indicates the permutation of the 5 axis of $\matR^5$ induced by the isometric pairing: the permutation is enough to recover the isometry that glues the two facets. It is clear from the table that if we exchange the two polytopes via the identity map we get a status-preserving isometry of $N^5$, yielding the fibering orbifold $\tilde M/\Gamma$. 
The pairing is combinatorially equivalent to the one exhibited in \cite{RT5}, as we checked with a computer.

Some topological information on the manifold $N^5$ was already given in \cite[Section 9]{RT5}. We have
$$H_1(N^5) = \matZ \oplus \matZ_4, \quad H_2(N^5) = \matZ_4^2, \quad
H_3(N^5) = \matZ, \quad H_4(N^5) = \matZ.$$

This is the hyperbolic 5-manifold with the smallest volume $7\zeta(3)/4 = 2.103\ldots $ known. It is orientable and has two cusps, whose sections are the flat 4-manifolds denoted as $D$ and $P$ in \cite{RT5}. Both these flat 4-manifolds fiber over $S^1$ with fiber the Hantsche--Wendt flat 3-manifold $\HW$, the first with trivial monodromy (so $D = \HW \times S^1$), and the second with a non-trivial one. We have
$$H_1(D) = \matZ \oplus \matZ_4^2, \quad H_1(P) = \matZ \oplus \matZ_4.$$

\subsection{The fiber} 
Using Sage we have written the algorithm described in Section \ref{determination:fiber:subsection} to construct the fiber $F^4$ of the fibration $f\colon N^5 \to S^1$. The code is available from \cite{code}. The resulting fiber $F^4$ is triangulated into $2\cdot 72 = 144$ distinct 4-dimensional simplexes, a much more reasonable number than the one found in Section \ref{determination:fiber:subsection}. 





The triangulation of $F^4$ has both ideal and real vertices. We have then asked Regina \cite{regina} to simplify it, and the result is a nice triangulation of $F^4$ with only 40 distinct 4-simplexes. Its isomorphism signature is

\begin{center}
{\tt \small
\hash{OvvvAPMAAwMvzAAQQwvAwLQQQQPAvQQQQQciggkhhikjlnppptuqwrvxutxvCDAFIDIHDEBIGCBEFGJHKNLNNMLKKMMLNLKMAaAa8awb8awbwb8aaaaaaa8aAa8aaa8a8awb8aaaaaAa8awbAawbwbAaAawbwbaaaaaawbaaaawbaaaawbwbaaaaAaaa8aAaAaaaAaaaaawbwb8a8aAaAaaa8a}
}
\end{center}

This is is a nice purely ideal triangulation, with 5 ideal vertices, 17 edges, 78 triangles, 100 tetrahedra, and 40 polychora. Regina also found some triangulations with 36 distinct 4-simplexes, that look however less symmetric than the one described above. One has the following isomorphism signature:

\begin{center}
{\tt \small
\hash{KvLLAAMzLvLLPPvLQQvQQQQQzwQQQQQfcgdgfhjgnnuroouwxpAxwtABDAzBvCAEzvBwxzFEDEDJIHGIHJIHJJ2a2aDaJaaaJaaaaaVbaaaataaaWaWaDa1aTbPb2aPbRa2afayaDagafbRaJaRafata1ayaJayaJayataJbaaEaEaaa2avaga2avaJaJavafbya}
}
\end{center}

Regina tells us that $F^4$ is the interior of a compact manifold $\bar F^4$ with 5 boundary components, each homeomorphic to \HW. This is of course coherent with the fact that the cusp sections of $N^5$ are bundles with \HW\ fibers. The fact that $\bar F^4$ has more boundary components than $\bar N^5$ is of course not a contradiction, since many boundary components of $\bar F^4$ can lie in the same boundary component of $\bar N^5$. Regina computes the integral homology of $F^4$, which is
$$H_1(F^4) = \matZ_4^4,\quad
H_2(F^4) = \matZ^4, \quad
H_3(F^4) = \matZ^4.
$$

We deduce that $\chi (F^4)=1$. This implies in particular that there is no way to obtain a smaller fiber by further quotienting $N^5$ by some isometries.

The manifold $F^4$ is of course aspherical, since $N^5$ is aspherical and covered by $F^4\times \matR$. Regina furnishes a presentation of its fundamental group with 4 generators ${\tt a,b,c,d}$ and 8 relations:
\begin{gather*}
{\tt a^2 b a^2 b^{-1}, \quad a^{-1} b^2 a b^2, \quad c^2 d^{-1} c^2 d, \quad
    c d^2 c^{-1} d^2}, \\
{\tt    a d^{-1} c^{-1} a^{-1} c b d b^{-1}, \quad   
a c a^{-1} b^{-1} c^{-1} b d^{-1} b d b^{-1},} \\
{\tt 
a c^{-1} a c d^{-1} c b^{-1} a^2 c^{-1} d b, \quad
    a c^{-1} d b a b d^{-1} c b a c b a c^{-1}}.
\end{gather*}

\subsection{A totally geodesic fibering 3-dimensional submanifold}
We have noticed in Section \ref{3:subsection} that $M^5$ contains eight disjoint totally geodesic hyperbolic 3-manifolds $M^3$ such that $f|_{M^3}$ is a fibration on each. These manifolds $M^3$ are precisely the counterimages of the 8 bad ridges of $P^5$.

The same phenomenon occurs in the fibration $f\colon N^5 \to S^1$. The bad ridges of the two copies of $P^5$ that form the tessellation of $N^5$ glue to form a unique connected totally geodesic hyperbolic 3-manifold $N^3\subset N^5$, that is covered by the 3-manifold $M^3$ considered above. By direct inspection we find that the fiber of $f\colon N^3 \to S^1$ is a torus with three punctures, naturally contained in the fiber $F^4$. 

\subsection{The monodromy?}
Like every other fibration, the one $f\colon N^5 \to S^1$ that we have constructed here could be fully described topologically by exhibiting the fiber $F^4$ and the monodromy diffeomorphism $\varphi \colon F^4 \to F^4$. Thanks to Regina, we have a quite reasonable description of the fiber $F^4$, but admittedly we do not have yet a description of the monodromy, because it is quite hard to infer it from the combinatorial data that we have.

Nevertheless, after analysing the triangulation furnished by Regina we make the following guess. Let $\Sigma^2$ be a torus with three holes. 

\begin{guess}
The 4-manifold $\bar F^4$ is obtained from one boundary component $\HW$ by attaching first $\Sigma^2 \times D^2$ along an embedding $(\partial\Sigma^2) \times D^2 \hookrightarrow \HW$, and then some handles of dimension $\geq 2$. The monodromy $\varphi$ is the identity on $\HW$, it preserves $\Sigma^2$, and acts on it like a pseudo-Anosov diffeomorphism.
\end{guess}

The image of the embedding of $(\partial \Sigma^2) \times D^2$ in $\HW$ should be a thickened link with 3 components (that is, three disjoint solid tori). The surface $\Sigma^2$ should be the transverse intersection of the geodesic 3-manifold $N^3$ described above and the fiber $\bar F^4$. At the fundamental group level, if this picture were true we would deduce that $\pi_1(\bar F^4)$ is generated by the injectively embedded subgroups $\pi_1(\HW)$ and $\pi_1(\Sigma^2)$, and the monodromy acts on the first like the identity and on the second like a pseudo-Anosov diffeomorphism. That might lead (if it were true!) to a reasonable description of the monodromy.

\section{Non-hyperbolic groups} \label{GGT:section}
We prove here Corollary \ref{main:cor}. Let $M^5$ be the cusped hyperbolic 5-manifold that fibers over $S^1$, with fiber $F^4$, constructed in Section \ref{fibration:section}. (We may suppose after unwrapping that the fiber $F^4$ is connected, see Remark \ref{unwrap:rem}.)

Recall that $M^5$ has 40 toric cusps.
We pick 40 disjoint embedded sections of the cusps and truncate $M^5$ along them to get a compact manifold $\bar M^5$, whose interior is diffeomorphic to $M^5$, and whose boundary consists of 40 Euclidean 4-tori. By Proposition \ref{tori:prop} the restriction $f \colon T^4 \to S^1$ to each boundary Euclidean 4-torus $T^4\subset \partial M^5$ is isotopic to a \emph{geodesic} fibration, that is one where each fiber is a geodesic 3-torus in $T^3$.
We modify the original fibration $f\colon \bar M^5\to S^1$ via this isotopy so that its restriction to each boundary component $T^4$ is geodesic. Let $\bar F^4$ be the fiber of $f\colon \bar M^5 \to S^1$, a compact 4-manifold bounded by 3-tori whose interior is diffeomorphic to $F^4$.

By the residual finiteness of $\pi_1(M^5)$, up to passing to a finite regular cover we may suppose that each 4-torus boundary $T^4$ of $\bar M^5$ has systole larger than $2\pi$. The fibration lifts obviously to the finite cover. We keep denoting the manifold and the fiber by $M^5$ and $F^4$. The number of boundary 4-tori of $\bar M^5$ may now be bigger than 40, but this is not a problem.

We now follow the terminology of \cite{FM}. Let $\hat M^5$ be the space obtained from $\bar M^5$ by filling along the geodesic $3$-tori fibers in all the boundary components. Topologically, we are shrinking every geodesic 3-torus fiber to a point, thus replacing every boundary component of $\bar M^5$ with a circle. The complement of these circles in $\hat M^5$ is homeomorphic to $M^5$. We set $G=\pi_1(\hat M^5)$.

\begin{prop}
The space $\hat M^5$ is aspherical and $G$ is hyperbolic and torsion-free.
\end{prop}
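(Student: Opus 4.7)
The plan is to recognize $\hat M^5$ as a group-theoretic Dehn filling of $\bar M^5$ and invoke the Fujiwara--Manning filling theorems \cite{FM}. First I would set up the algebraic data: $\Gamma := \pi_1(\bar M^5)$ is hyperbolic relative to the cusp subgroups $P_i := \pi_1(T^4_i) \cong \matZ^4$. For each $i$, Proposition \ref{tori:prop} (after the isotopy already performed) provides a \emph{geodesic} fibration $T^4_i \to S^1$ whose fiber is a geodesic 3-subtorus, and we let $K_i \triangleleft P_i$ be the $\matZ^3$ kernel of the induced surjection $P_i \twoheadrightarrow \matZ$. The space $\hat M^5$ is obtained from $\bar M^5$ by coning each such $T^3$-fiber inside each $T^4_i$ to a point; equivalently, by attaching to each $T^4_i$ the mapping cylinder of $T^4_i \to S^1$. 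Van Kampen therefore gives
$$G \;=\; \pi_1(\hat M^5) \;=\; \Gamma \big/ \langle\!\langle K_i : i \rangle\!\rangle.$$

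Next I would verify the length hypothesis of \cite{FM}. We have arranged that every boundary $T^4_i$ has systole larger than $2\pi$, and the subgroup $K_i$ is generated by geodesic loops of $T^4_i$; hence every non-trivial element of $K_i$ is represented by a closed geodesic of length greater than $2\pi$, which is the standard ``sufficiently long'' hypothesis for hyperbolic Dehn filling. The Fujiwara--Manning theorem then produces simultaneously: (i) a locally CAT($-1$) metric on $\hat M^5$ whose universal cover is CAT($-1$) and hence contractible, so $\hat M^5$ is aspherical; (ii) hyperbolicity of $G$; and (iii) torsion-freeness of $G$, since any non-trivial torsion element would have to fix a point on the CAT($-1$) universal cover, which a CAT($-1$) isometry can do only if it is the identity.

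The main obstacle will be matching our geometric setup to the precise hypotheses of \cite{FM}, in particular confirming that the threshold ``systole $>2\pi$'' is the correct one and that the filling of a toric cusp by a codimension-one sub-lattice $K_i$ falls inside the scope of their theorem (in \cite{FM} one typically kills an entire peripheral; here the quotient $P_i/K_i \cong \matZ$ persists, so the filling is ``partial'' and one has to be careful about what the resulting peripheral structure is). Should a strictly larger length bound be required, residual finiteness of $\Gamma$ (which is linear) lets me pass to a deeper finite regular cover where the boundary systoles are arbitrarily long; the fibration, the cusps, and the coning construction all lift to such a cover, so this is harmless.
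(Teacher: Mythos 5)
Your proposal takes essentially the same route as the paper: both recognize $\hat M^5$ as a $2\pi$-filling in the sense of Fujiwara--Manning and invoke their theorem (the paper cites \cite[Theorem 2.7]{FM}) to obtain a complete locally CAT($-\kappa$) metric, from which asphericity, hyperbolicity of $G$, and torsion-freeness follow. Your worry about ``partial'' filling is unfounded: FM's Theorem 2.7 is stated precisely for coning off totally geodesic subtori of cusp cross-sections, so the surviving $\matZ$ quotient of each peripheral is already built into their setup. One small slip: a CAT($-1$) isometry fixing a point need not be the identity (consider a rotation of $\matH^2$); the correct torsion-freeness argument is that a finite subgroup of $G$ would fix a point in the complete CAT($0$) universal cover by Bruhat--Tits, contradicting freeness of the deck action, or equivalently that $G$ has finite cohomological dimension because $\hat M^5$ is a finite aspherical complex.
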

\begin{proof}
This is a \emph{2$\pi$-filling} because each of the shrinked 3-tori has systole larger than $2\pi$ (there are no closed geodesics shorter than $2\pi$ in the ambient 4-torus). Therefore \cite[Theorem 2.7]{FM} implies that $\hat M^5$ has a locally CAT($-\kappa$) complete path metric for some $\kappa > 0$. In particular $\hat M^5$ is aspherical and $G=\pi_1(\hat M^5)$ is hyperbolic and torsion-free.
\end{proof}

The smooth fibration $f\colon \bar M^5 \to S^1$ quotients to a topological fibration $f\colon \hat M^5 \to S^1$ whose fiber is homeomorphic to the space $\hat F^4$ obtained from $\bar F^4$ by coning each of its 3-torus boundary components. At the fundamental groups level we get $f_* \colon G \to \matZ$ and we define
$$H=\pi_1(\hat F^4) = \ker f_*.$$
\begin{prop}
The space $\hat F^4$ is aspherical and hence $H$ is of finite type.
\end{prop}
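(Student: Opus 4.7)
The plan is to establish separately that $\hat F^4$ is a finite cell complex and that it is aspherical; once both are in hand, the conclusion that $H = \pi_1(\hat F^4)$ is of finite type follows from the very definition recalled in the introduction.

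The finiteness as a cell complex is the routine half: the compact manifold-with-boundary $\bar F^4$ carries a finite CW structure, and coning off each of its finitely many 3-torus boundary components contributes only finitely many additional cells (a cone point together with the cones on the cells of that 3-torus). So $\hat F^4$ is a finite CW complex.

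For asphericity the input I would use is the previous proposition, which gives that $\hat M^5$ is aspherical and hence has contractible universal cover. I would then consider the infinite cyclic cover $\widetilde{\hat M^5}_{\matZ} \to \hat M^5$ corresponding to $\ker f_* = H$. Realising this cover as the pullback of $f \colon \hat M^5 \to S^1$ along the universal covering $\matR \to S^1$ presents it as the total space of a fibration over $\matR$ with fiber $\hat F^4$. Since $\matR$ is contractible, the fiber inclusion $\hat F^4 \hookrightarrow \widetilde{\hat M^5}_{\matZ}$ is a weak equivalence by the long exact sequence of homotopy groups, hence a homotopy equivalence of CW-type spaces. But $\widetilde{\hat M^5}_{\matZ}$ has fundamental group $H$ and shares its universal cover with $\hat M^5$, which is contractible; so $\widetilde{\hat M^5}_{\matZ}$ is a $K(H,1)$, and transporting along the homotopy equivalence shows that $\hat F^4$ is a $K(H,1)$ too, in particular aspherical.

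The delicate point---and the only step I would flag as needing care---is that the excerpt promotes $f$ to a topological fibration but not necessarily to a locally trivial bundle. The argument above sidesteps this by relying only on the homotopy-theoretic fact that in a Hurewicz (or Serre) fibration over a contractible base the fiber inclusion is a weak equivalence, which is a standard consequence of the homotopy long exact sequence.
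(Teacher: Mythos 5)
Your proof is correct and follows essentially the same route as the paper. The paper's version is a touch more concrete: it observes that the infinite cyclic cover of $\hat M^5$ corresponding to $H$ is literally the product $\hat F^4 \times \matR$ (the product structure on $\bar F^4 \times \matR$ survives the collapsing of the boundary $T^3 \times \{t\}$'s), so asphericity of $\hat F^4$ follows from ``covered by aspherical is aspherical'' plus ``factor of an aspherical product is aspherical,'' with no appeal to the long exact sequence of a fibration. Your long-exact-sequence argument over the contractible base $\matR$ is a valid and slightly more general substitute for that direct product identification, and the point you flag about $f \colon \hat M^5 \to S^1$ being merely a topological fibration rather than a smooth bundle is actually already resolved in the paper's setting: near the singular circles the map is modeled on the projection $C(T^3) \times S^1 \to S^1$, so it is in fact a locally trivial bundle, which is why the cover is a product on the nose. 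Your explicit remark that $\hat F^4$ is a finite CW complex is a useful addition that the paper leaves implicit.
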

\begin{proof}
The space $\hat M^5$ is aspherical and covered by $\hat F^4 \times \matR$, which is hence also aspherical, so $\hat F^4$ also is.
\end{proof}

It only remains to prove that $H$ is not hyperbolic.
We note that $\hat M^5$ and $\hat F^4$ are aspherical orientable pseudo-manifolds of dimension 5 and 4. In particular we get the following.
\begin{prop} We have
$H^5(G) = H^5(\hat M^5) = \matZ$ and $H^4(H) = H^4(\hat F^4) = \matZ$. 
\end{prop}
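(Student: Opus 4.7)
My plan is to handle the four equalities in the statement as two separate issues. The two ``group cohomology equals space cohomology'' identifications $H^5(G) = H^5(\hat M^5)$ and $H^4(H) = H^4(\hat F^4)$ are immediate from asphericity, which is already established in the two preceding propositions: $\hat M^5$ is a $K(G,1)$ and $\hat F^4$ is a $K(H,1)$. That leaves the substantive task of computing the top cohomology of each of the two spaces.

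For both I would use the same strategy, namely excision combined with Lefschetz duality on the underlying compact orientable manifolds with boundary $\bar M^5$ and $\bar F^4$. The key observation is that passing from $\bar X$ to $\hat X$ only alters a neighborhood of $\partial \bar X$ by gluing in something of strictly lower cohomological dimension---a disjoint union of circles in dimension 5, and of points in dimension 4---so the top cohomology of the pair with respect to this neighborhood is undisturbed.

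Concretely, for $\hat M^5$ I would pick an open neighborhood $U$ of the circles that replaced the toric boundary components of $\bar M^5$. Locally each such circle has a neighborhood that fibers as a cone on $T^3$ over $S^1$, so $U$ deformation retracts to a disjoint union of circles and in particular $H^k(U) = 0$ for $k \geq 2$. Excision identifies $H^\ast(\hat M^5, U)$ with $H^\ast(\bar M^5, \partial \bar M^5)$, and the long exact sequence of the pair forces $H^5(\hat M^5) \cong H^5(\bar M^5, \partial \bar M^5) \cong H_0(\bar M^5) = \matZ$, with the last isomorphism coming from Lefschetz duality on the connected orientable manifold $\bar M^5$. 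The argument for $\hat F^4$ is structurally identical and slightly simpler: a neighborhood $U$ of the cone points is itself contractible on each component, hence $H^k(U) = 0$ for $k \geq 1$, and then excision together with Lefschetz duality gives $H^4(\hat F^4) \cong H^4(\bar F^4, \partial \bar F^4) \cong H_0(\bar F^4) = \matZ$.

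The only genuinely non-routine checks are orientability and the local cone-bundle description near the newly created strata of $\hat M^5$. For orientability, $M^5$ can be taken orientable (passing to a further finite cover if needed, which is already being done to enforce the $2\pi$-filling hypothesis), and $\bar F^4$ inherits an orientation as the fiber of a smooth fibration between orientable manifolds. For the local model, the fact from Proposition \ref{tori:prop} that each cusp section is a $T^3$-bundle over $S^1$ propagates to give the claimed cone-on-$T^3$-bundle neighborhoods of the new circles. I do not expect a genuine obstacle here; the whole proposition is essentially an excision calculation dressed up in the language of pseudo-manifolds.
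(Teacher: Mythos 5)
Your argument is correct and is essentially the same excision computation the paper uses: the paper replaces the singular set $S$ by a closed regular neighborhood $\nu S$, excises to get $H^5(\hat M^5, S) \cong H^5(\hat M^5,\nu S) \cong H^5(\bar M^5,\partial\bar M^5)$, and invokes the long exact sequence of the pair $(\hat M^5, S)$, exactly as you do with your open $U$. The only cosmetic differences are that you explicitly invoke Lefschetz duality and spell out the orientability check, while the paper simply records that $\hat M^5$ and $\hat F^4$ are orientable pseudo-manifolds and states $H^5(\bar M^5,\partial\bar M^5)=\matZ$ directly.
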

\begin{proof}
We have $M^5 = \hat M^5 \setminus S$ where $S$ is the singular set and consists of circles. The exact sequence of the pair gives $H^5(\hat M^5) = H^5(\hat M^5, S)$. If we denote by $\nu S$ a closed regular neighbourhood of $S$ we have $H^5(\hat M^5, S) = H^5(\hat M^5, \nu S) = H^5(\bar M^5, \partial \bar M^5) = \matZ$ after excising the interior of $\nu S$. The proof for $\hat F^4$ is similar with $S$ consisting of points.
\end{proof}

The crucial point to show that $H$ is not hyperbolic is the following.

\begin{prop}
The outer automorphism group $\Out(H)$ is infinite.
\end{prop}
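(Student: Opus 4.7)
The plan is to exploit the short exact sequence
\[ 1 \to H \to G \xrightarrow{f_*} \matZ \to 1 \]
arising from the fibration $f\colon \hat M^5 \to S^1$. Choosing a lift $t\in G$ of $1\in \matZ$ (which exists because $\matZ$ is free), conjugation by $t$ defines an automorphism $\tilde\varphi\in \Aut(H)$, and its class $[\tilde\varphi]\in\Out(H)$ is independent of the lift. The goal is to show that $[\tilde\varphi]$ has infinite order, from which $\Out(H)$ is infinite follows immediately.

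Suppose for contradiction that $[\tilde\varphi]$ has order dividing some $n\geq 1$ in $\Out(H)$. Then $\tilde\varphi^n$ is inner, say $\tilde\varphi^n = c_h$ (conjugation by some $h\in H$). Set $z = t^n h^{-1}\in G$. For every $x\in H$ we compute
\[ z\,x\,z^{-1} = t^n h^{-1} x h t^{-n} = \tilde\varphi^n(h^{-1} x h) = h(h^{-1}xh)h^{-1} = x, \]
so $H$ is contained in the centralizer $C_G(z)$. Moreover $z$ projects to $n \in \matZ$ under $f_*$, hence if $n\neq 0$ the element $z$ has infinite order in $G$.

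Now invoke the standard fact that in a torsion-free hyperbolic group every element of infinite order lies in a unique maximal cyclic subgroup, which equals its centralizer; since $G$ is torsion-free and hyperbolic by the previous proposition, $C_G(z)$ is infinite cyclic. The inclusion $H\subseteq C_G(z)\cong \matZ$ forces $H$ to be cyclic. This contradicts the fact, established in the proposition above, that $H^4(H) = H^4(\hat F^4) = \matZ$, since cyclic groups have cohomological dimension at most $1$. Therefore no such $n\geq 1$ exists and $[\tilde\varphi]$ has infinite order in $\Out(H)$.

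There is no real obstacle here once Theorem \ref{main:teo}, the hyperbolicity of $G$, and the cohomological computation $H^4(H)=\matZ$ are in hand: the proof is a clean application of the ``centralizers are cyclic'' principle for torsion-free hyperbolic groups, combined with the obstruction coming from the cohomological dimension of $H$. The only point worth double-checking is that the case $n=0$ is excluded, which simply means we need $[\tilde\varphi]$ to have finite \emph{positive} order, i.e.\ $\Out(H)$ finite; this is exactly the contradiction hypothesis.
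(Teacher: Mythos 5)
Your proof is correct and follows essentially the same approach as the paper's: in both arguments one takes an element of $G$ projecting to a nonzero integer, supposes its conjugation action on $H$ is inner (up to a power), deduces that the associated element of $G$ centralizes all of $H$, invokes the fact that centralizers of infinite-order elements in torsion-free hyperbolic groups are infinite cyclic, and derives a contradiction from $H^4(H)=\matZ$. Your $z=t^n h^{-1}$ plays exactly the role of the paper's $\beta^{-1}\alpha^k$; the only cosmetic difference is that you phrase the argument as ``the monodromy class has infinite order'' while the paper shows directly that conjugation by $\alpha^k$ is non-inner for every $k\neq 0$.
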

\begin{proof}
This holds because the monodromy of the fibration has infinite order. To show this, let $\alpha \in \pi_1(\hat M^5)$ be an element whose image $f_*(\alpha)$ generates $\pi_1(S^1)$. Conjugation by $\alpha^k$ is an automorphism of $H$ that is non-trivial in $\Out(H)$ for each $k\neq 0$. Indeed, if by contradiction we had $\alpha^k g \alpha^{-k} = \beta g \beta^{-1}$ for some $\beta \in H$ and all $g\in H$, then $\beta^{-1}\alpha^k \neq e$ would commute with every element of $H$. Since $G$ is hyperbolic, the centraliser of $\beta^{-1}\alpha^k$ is infinite cyclic \cite[Corollary III.$\Gamma$.3.10]{BH}, and hence cyclic since $G$ is torsion-free (every virtually cyclic torsion free group is cyclic). Therefore $H$ is infinite cyclic, a contradiction since $H^4(H) = \matZ$.
\end{proof}

Finally, suppose by contradiction that $H$ is hyperbolic. Since $\Out(H)$ is infinite, by Rips' theory \cite[Corollary 1.3]{BF} the group $H$ splits over a cyclic subgroup. However, this is impossible.

\begin{prop}
The group $H$ cannot split over a cyclic subgroup.
\end{prop}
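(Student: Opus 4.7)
The plan is to argue by contradiction: suppose $H$ splits as an amalgam $A *_C B$ or HNN extension $A *_C$ with $C$ cyclic. I would first reduce to the case $C\cong \matZ$. Indeed, $H$ is torsion-free (being a subgroup of the torsion-free group $G$), and $H^4(H)=\matZ\ne 0$ forces $\text{cd}(H)\ge 4$, so $H$ is neither finite nor virtually cyclic; together with the asphericity of $\hat F^4$, Stallings' theorem on ends rules out splittings over finite subgroups, giving $C\cong \matZ$.

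The next step is the Mayer-Vietoris sequence in group cohomology associated to the splitting. Since $H^i(\matZ;\matZ)=0$ for $i\ge 2$, the sequence yields $H^4(H)\cong H^4(A)\oplus H^4(B)$ in the amalgam case and $H^4(H)\cong H^4(A)$ in the HNN case. Hence at least one vertex group, say $A$, has $H^4(A)=\matZ$ and cohomological dimension $4$; in particular, $A$ has infinite index in $H$, since the splitting is nontrivial.

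The main lever for reaching a contradiction is that $H\subset G$ sits inside a torsion-free hyperbolic group, so $H$ contains no $\matZ^2$ and no Baumslag--Solitar subgroup $BS(m,n)$. In the HNN case $H=A*_C$ with $C=\langle c\rangle$ and stable letter $t$, one has $tct^{-1}=c'\in A$; analysing the relationship between $c$ and $c'$ via the cyclic subgroup structure of hyperbolic groups (two infinite-order elements either share a maximal cyclic subgroup or generate a free group), one aims to produce either a $\matZ^2$ subgroup (if $c$ and $c'$ are conjugate in $A$ by some $a$, so $a^{-1}t$ commutes with $c$) or a Baumslag--Solitar subgroup (if $c$ and $c'$ lie in a common maximal cyclic of $G$ with different exponents), both of which are forbidden.

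The harder case is the amalgam $H=A*_C B$: no stable letter directly produces a $\matZ^2$ or $BS$ subgroup, and the mere absence of these is insufficient to preclude cyclic amalgams (closed surface groups of genus $\ge 2$ famously split over $\matZ$). My strategy here would be to exploit the pseudo-manifold structure of $\hat F^4$. The compact manifold with boundary $(\bar F^4,\partial\bar F^4)$ is a Poincar\'e--Lefschetz pair of dimension $4$; coning off the toroidal boundary components kills the peripheral $\matZ^3$ subgroups but leaves a ``$\text{PD}_4$-like'' duality on $H$ witnessed by the fundamental class in $H^4(H)=\matZ$. This duality should force any proper subgroup $A\subset H$ with $H^4(A)=\matZ$ to have finite index, contradicting the infinite-index conclusion of Mayer-Vietoris. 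Making the duality precise for the pseudo-manifold $\hat F^4$ is where I expect the main technical obstacle to lie, and it may require appealing to accessibility results of Dunwoody--Swenson or to the Fujiwara--Manning Dehn filling machinery that was already invoked to produce the CAT($-\kappa$) structure on $\hat M^5$.
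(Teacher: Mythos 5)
Your setup is on the right track -- you correctly obtain $H^4(A)\oplus H^4(B)=H^4(H)=\matZ$ from Mayer--Vietoris and correctly observe that the vertex groups have infinite index in $H$ -- but you then turn the key in the wrong direction and get stuck. The paper's closing step is much more elementary than the duality machinery you invoke: an infinite-index subgroup $A<H=\pi_1(\hat F^4)$ is the fundamental group of an \emph{infinite} (hence non-compact) cover $Y\to\hat F^4$; since $\hat F^4$ is a finite aspherical complex of dimension $4$, $Y$ is a non-compact aspherical $4$-dimensional complex (indeed a non-compact oriented $4$-pseudo-manifold), and such a space has $H^4(Y;\matZ)=0$. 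Thus $H^4(A)=H^4(B)=0$, contradicting $H^4(A)\oplus H^4(B)=\matZ$ outright. You instead deduce that ``at least one vertex group $A$ has $H^4(A)=\matZ$'' and then try to rule this out via a $\mathrm{PD}_4$-style duality for the pseudo-manifold $\hat F^4$, which you acknowledge you cannot make precise. That is a genuine gap: the Fujiwara--Manning or Dunwoody--Swenson tools you mention will not hand you the needed duality, and there is no need for them -- the vanishing of top cohomology on non-compact covers already does the job.

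Two further remarks. First, the Stallings-theorem reduction to $C\cong\matZ$ is unnecessary: the vanishing argument above applies verbatim to free products ($C$ trivial), since both free factors still have infinite index, and Mayer--Vietoris for $A*B$ gives the same contradiction. Second, your HNN-case route via $\matZ^2$ or Baumslag--Solitar subgroups is a detour and is not actually complete: if $c$ and $c'=tct^{-1}$ do not share a maximal cyclic subgroup of $G$, you get neither a $\matZ^2$ nor a $BS(m,n)$; the pair may simply generate a free group, and $\langle c,t\rangle$ need not be forbidden in a hyperbolic group. The paper's argument handles the HNN case uniformly: Mayer--Vietoris gives $H^4(H)\cong H^4(A)$ (since $H^4(\matZ)=H^5(\matZ)=0$), and $A$ has infinite index in $A*_C$, so $H^4(A)=0$, again a contradiction.
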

\begin{proof}
Suppose that $H= A*_\matZ B$ (the cases where the cyclic group is trivial and/or we have a HNN extension are similar). We get an exact sequence
$$\cdots \longrightarrow H^5(\matZ) \longrightarrow H^4(H) \longrightarrow H^4(A) \oplus H^4(B) \longrightarrow H^4(\matZ) \longrightarrow \cdots$$
and hence $H^4(A) \oplus H^4(B) = H^4(H) = \matZ$.
Both $A$ and $B$ have infinite index in $H$, so they are the fundamental group of an infinite cover of $\hat F^4$, which is a non-compact aspherical 4-dimensional object.
Therefore $H^4(A) = H^4(B)=0$, a contradiction.
\end{proof}

The proof of Corollary \ref{main:cor} is complete.

\section{Further research} \label{further:section}
There is a heuristic aspect on the results exposed in this paper which is worth noting: we did not construct deliberately a hyperbolic 5-manifold that fibers; on the contrary, we picked the simplest hyperbolic 5-manifold $M^5$ that we know, we tried to construct some fibration $f$ on it, and we were quite surprised to find one with moderate effort. This might suggest that there could be plenty of fibering hyperbolic 5-manifolds, and that our present difficulty in exhibiting more examples is only due to the intrinsically complicated combinatorics involved with any of these objects and to our lack of knowledge.
A first natural direction of research consists of finding more examples. 

\begin{quest}
How many commensurability classes of hyperbolic 5-manifolds that fiber over the circle are there? Are there any compact examples? 
\end{quest}

The technology exposed here applies \emph{as is} only to right-angled polytopes, and we know only one commensurability class of right-angled hyperbolic 5-polytopes. Another natural question is the following.

\begin{quest}
Are there hyperbolic manifolds that fiber in dimension $7,9, \ldots$ ?
\end{quest}

More generally, it is asked in \cite{BM} whether there are hyperbolic manifolds with perfect circle-valued Morse functions in all dimensions: at present we know that these exist for all $n\leq 5$. Note that in dimension $n=4$ there are both cusped and compact examples \cite{BM}. A bolder question raised in \cite{BM} is whether any hyperbolic manifold of any dimension should be finitely covered by one that has a perfect circle-valued Morse function. In odd dimensions, this is equivalent to asking whether every odd-dimensional hyperbolic manifold fibers virtually. Not a single counterexample seems known at present.

Fibering is an open condition in cohomology. Given an odd-dimensional hyperbolic manifold $M$, there is a (possibly empty) open cone $U\subset H^1(M,\matR)$ such that the classes represented by fibrations are precisely those in $U\cap H^1(M,\matZ)$. In dimension 3 we know from Thurston's work that $U$ is \emph{polytopal}, that is the cone over some open facets of a polytope. It is asked in \cite{BM} whether this holds in any dimension. It would be interesting to fully understand at least some concrete examples.

\begin{probl}
Determine $U$ for a specific fibering hyperbolic 5-manifold $M$. Is $U$ polytopal? Is it dense in $H^1(M,\matR)$? Can we find examples where $U$ is dense, and others where it is not dense?
\end{probl}

Note that this question is trivial for the Ratcliffe -- Tschantz manifold $N^5$ since $H^1(N^5,\matR) = \matR$. One might look at some cover of $N^5$ with larger first Betti number. 

Whenever a hyperbolic 3-manifold fibers over the circle, we know that the fiber is a finite-type surface and the monodromy is pseudo-Anosov. Here we were able to determine the fiber for $N^5$ in the form of a 4-dimensional ideal triangulation that Regina can handle. However, we could not determine the monodromy. The monodromy is a self-diffeomorphism of the fiber that is certainly of interest here.  

\begin{probl}
Determine the monodromy in some example. Study its dynamics. 
\end{probl}

Every fibration of a hyperbolic 5-manifold over the circle induces an abelian covering that is a geometrically infinite hyperbolic 5-manifold diffeomorphic to a product $F\times \matR$. In dimension 3 there are many beautiful theorems on such manifolds, while in dimension 5 we know absolutely nothing for the moment, except that they exist.

\begin{probl}
Consider a geometrically infinite hyperbolic 5-manifold diffeomorphic to $F\times \matR$. Is it rigid? Can we define some reasonable ending invariant? Is there any such manifold that does not cover a finite-volume one?
\end{probl}

\end{document}